\renewcommand{\div}{\operatorname{div}}
\newcommand{\Tt}{{\mathbb{T}}}
 \newcommand{\Rr}{\mathbb R}
 \newcommand{\af}{\alpha}
 \newcommand{\ep}{\epsilon}
\newcommand{\be}{\beta}
 \newcommand{\ga}{\gamma}
 \newcommand{\de}{\delta}
  \newcommand{\lam}{\lambda}
 \newcommand{\te}{\theta}
\renewcommand{\div}{\operatorname{div}}
\newcommand{\tr}{\operatorname{Tr}}
\newtheorem{teo}{Theorem}[section]
\newtheorem{Lemma}{Lemma}[section]
\newtheorem{Corollary}{Corollary}[section]
\newtheorem{Proposition}{Proposition}[section]
\newtheorem{Assumption}{A}
\begin{document}

\title{Time dependent mean-field games in the superquadratic case}
\author{Diogo
  A. Gomes\footnote{King Abdullah University of Science and Technology (KAUST), CEMSE Division and
  KAUST SRI, Uncertainty Quantification Center in Computational Science and Engineering, Thuwal 23955-6900. Saudi Arabia. e-mail: diogo.gomes@kaust.edu.sa.}, 
  Edgard Pimentel
  \footnote{Instituto Nacional de Matem\'atica Pura e Aplicada, IMPA. Estrada Dona Castorina, 110, 22460-320 Rio de Janeiro-RJ,
  Brazil. e-mail: edgardap@impa.br.}, 
H\'ector S\'anchez-Morgado.  
\footnote{Instituto de Matem\'aticas, Universidad Nacional Aut\'onoma
  de M\'exico. e-mail: hector@matem.unam.mx.}}

\date{\today} 

\maketitle

\begin{abstract}
We investigate time-dependent mean-field games with
superquadratic Hamiltonians and a power dependence on the
measure.
Such problems pose substantial mathematical challenges as the key
techniques used in the subquadratic case do not extend to the
superquadratic setting.  
Because of the superquadratic structure of the Hamiltonian, Lipschitz
estimates for the solutions of the Hamilton-Jacobi equation are obtained in the present paper through a
novel set of techniques. These explore the parabolic nature of the
problem through the nonlinear adjoint method. Well-posedness is
proven by combining Lipschitz regularity for the Hamilton-Jacobi
equation with polynomial estimates for solutions of the Fokker-Planck
equation.
Existence of classical solutions can is then established under
 conditions depending only on the growth of the
Hamiltonian and the dimension. Our
results also add to the current understanding of superquadratic
Hamilton-Jacobi equations.  
\end{abstract}

\thanks{
D. Gomes was partially supported by
KAUST baseline funds, KAUST SRI, Uncertainty Quantification Center in Computational Science and Engineering, and CAMGSD-LARSys through FCT-Portugal.

E. Pimentel is financed by CNPq-Brazil, grant 401795/2013-6.}

\section{Introduction}

The theory of mean-field games comprises a set of tools and methods, which aim at investigating differential games involving a (very) large number of rational, indistinguishable, players. These were introduced in the independent works of Lasry and Lions \cite{ll1, ll2, ll3, ll4} and Huang, Caines and Malham\'e \cite{C1, C2}. Since then, an intense research activity has been carried out in this field, as several authors have considered a variety of related problems. These include numerical methods \cite{lst}, \cite{DY}, \cite{CDY}, applications in economics \cite{llg1}, \cite{GueantT} and environmental policy \cite{lst}, finite state problems \cite{GMS}, \cite{GMS2}, \cite{GF}, explicit models \cite{Ge}, \cite{NguyenHuang}, obstacle-type problems \cite{GPat}, congestion \cite{GMit}, extended mean-field games \cite{GPatVrt}, \cite{GVrt}, probabilistic methods \cite{Carmona1}, \cite{Carmona2}, long-time behavior \cite{CLLP}, \cite{Cd1} and weak solutions \cite{Cd2}, \cite{Porb}, \cite{porretta}, to name only a few. For additional results, see also the recent surveys \cite{llg2}, \cite{cardaliaguet}, \cite{achdou2013finite}, or \cite{GS} and the references therein, and the College de France lectures by P-L. Lions \cite{LCDF,LIMA}.
 
A model time dependent mean-field game problem is given by
\begin{equation}
\label{eq:smfg0}
\begin{cases}
-u_t+H(x, Du)=\Delta u +g(m)\\
m_t-\div(D_pH m)=\Delta m,
\end{cases}
\end{equation}equipped with the initial-terminal conditions:
\begin{equation}
\label{itvp}
\begin{cases}
u(x,T)=u_T(x)\\
m(x,0)=m_0(x).
\end{cases}
\end{equation}
In the above,  the terminal instant $T>0$ is fixed. To simplify the
presentation, we consider the spatially periodic problem. For that, let $\Tt^d$ be the $d$-dimensional torus, identified as usual with the set $[0,1]^d$. Then we regard $u$
and $m$ as real valued functions defined over $\Tt^d\times\left[0,T\right]$. A typical Hamiltonian $H$ and
nonlinearity $g$ satisfying the assumptions that will be detailed in Section \ref{ma} are: 
$$H(x,p)\,=\,a(x)\left(1+|p|^2\right)^\frac{2+\mu}{2}\,+\,V(x),$$
and $$g(z)\,=\,z^\alpha,$$where $0\leq\mu<1$ and
$a,\,V\in\mathcal{C}^\infty(\Tt^d)$, $a,\,V>0$, are given. 

A fundamental question about MFG systems regards the existence of solutions. In the stationary setting, the first result in this direction was obtained in \cite{ll1}. Smooth solutions were studied in \cite{GM} (see 
also \cite{GIMY} for a related problem), \cite{GPM1}, and \cite{GPatVrt}. In \cite{ll2} the authors  addressed 
for the first time the question of
existence of weak-solutions to \eqref{eq:smfg0}-\eqref{itvp}. The planning problem was investigated in \cite{porretta} and \cite{Porb}, also 
in the framework of weak solutions. In the quadratic Hamiltonians case, existence of smooth solutions has been established in \cite{CLLP}. We emphasize the fact that the proof in \cite{CLLP} relies on a Hopf-Cole transformation and does not seem to extend to more general cases behaving like $|p|^2$ at infinity. As presented in \cite{LIMA}, mean-field games with quadratic or subquadratic growth in the Hamiltonian, and the power nonlinearity $g(m)=m^\alpha$, have classical solutions under some bounds on $\alpha$.
In \cite{GPM2} the authors extended and improved substantially these results in the subquadratic setting. 
Also in the subquadratic setting, existence of smooth solutions was studied in \cite{GPim1}, in the whole
space, and in \cite{GPim2} for logarithmic nonlinearities. 

To the best of our knowledge, superquadratic time dependent mean-field games have not been studied in the literature before the present paper, nor can they be addressed by a minor extension of existent results. We stress the fact that previous arguments regarding existence of weak solutions do not extend to the superquadratic setting and, therefore, not even in this case the existence of solutions had been established.

Indeed, many of the key estimates for quadratic or subquadratic
mean field games are simply not valid
for superquadratic Hamiltonians.
For instance, the Gagliardo-Nirenberg estimates combined with the Crandall-Amann technique \cite{CrAm} are no longer valid due to the growth of the Hamiltonian.
Consequently,
in the superquadratic case, estimates for Hamilton-Jacobi equations are substantially more delicate and require arguments quite distinct from the ones used in the quadratic or subquadratic cases. See, for instance, the recent developments concerning H\"older estimates in \cite{Bsuper}, \cite{CDLP}, \cite{CarSil}. 
To show existence of smooth solutions for the case of superquadratic Hamiltonians, we develop in this paper a new class of Lipschitz estimates. These are proven by identifying additional regularizing effects, which combine the parabolic structure of the Hamilton-Jacobi equations
with its stochastic optimal control origin.
This is achieved by employing the nonlinear adjoint method \cite{E3} in a novel way.

Our main result is the following:
\begin{teo}\label{teo:intro2}
Assume that A\ref{ah}-A\ref{alpha3} from Section \ref{ma} hold. Then there exists a $C^\infty$ solution $(u,m)$ to \eqref{eq:smfg0} under the initial-terminal conditions \eqref{itvp}, with $m>0$.
\end{teo}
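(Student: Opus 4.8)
The plan is to establish existence of classical solutions to \eqref{eq:smfg0}--\eqref{itvp} via a continuation (or, alternatively, a fixed-point) argument, for which the crucial ingredient is a closed set of a priori estimates that are uniform along the homotopy. I would introduce the family of problems where the coupling $g(m)$ is replaced by $\lambda g(m)$, or equivalently regularize the system by mollifying $m$ in the coupling and truncating, so that for each approximate parameter the system is solvable by standard parabolic theory (e.g. via Schauder's fixed point theorem using the fact that the Hamilton-Jacobi and Fokker-Planck equations are each individually well-posed given the other's data). The entire difficulty is then to pass to the limit, and this rests on obtaining estimates that do not degenerate.

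The key steps, in order, are as follows. First, derive the basic energy-type identity for the coupled system: multiplying the Hamilton-Jacobi equation by $m$, the Fokker-Planck equation by $u$, integrating over $\Tt^d\times[0,T]$ and subtracting, one obtains control of $\int\int m H(x,Du) + m|Du|^2$-type quantities together with $\int g(m)m$, using the monotonicity provided by $g$ being increasing. This gives a first integral bound on $m$ in some Lebesgue space and on $Du$ against the density $m$. Second — and this is the heart of the paper — apply the nonlinear adjoint method to the Hamilton-Jacobi equation: linearize around the solution, introduce the adjoint (Fokker-Planck-type) variable $\rho$ solving the backward-in-time adjoint equation with a suitable terminal datum, and use the representation $Du$ against $\rho$ together with the parabolic regularizing effect to upgrade the integral bounds on $Du$ to a genuine $L^\infty$ (Lipschitz) bound on $u$. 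Because $H$ is superquadratic, the standard Bernstein and Gagliardo--Nirenberg arguments fail, so one must exploit the precise structure $H(x,p)\sim a(x)|p|^{2+\mu}$ and the fact that $D_pH\cdot p - H \gtrsim |p|^{2+\mu}$ to close the adjoint estimate; this presumably is where the restriction $0\le\mu<1$ and the dimensional/growth bounds on $\alpha$ enter. Third, feed the Lipschitz bound on $u$ into the Fokker-Planck equation: with $D_pH(x,Du)\in L^\infty$, the drift is bounded, so standard parabolic estimates (or the energy method combined with Sobolev embedding, iterated à la Moser) give polynomial-in-time $L^p$ and eventually $L^\infty$ bounds on $m$, hence on $g(m)$. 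Fourth, bootstrap: with $g(m)$ bounded (and Hölder), Schauder estimates give $u\in C^{2+\beta}$, which in turn gives $D_pH(x,Du)\in C^{\beta}$, so $m\in C^{2+\beta}$, and then higher regularity follows by differentiating the equations and iterating, yielding $C^\infty$ smoothness; positivity of $m$ follows from the strong maximum principle applied to the Fokker-Planck equation with $m_0>0$.

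The main obstacle I anticipate is the third paragraph's second step: obtaining the Lipschitz estimate for the Hamilton-Jacobi equation in the superquadratic regime with only the integral control of $m$ coming from the energy identity, rather than an a priori $L^\infty$ bound on $g(m)$. The adjoint variable $\rho$ is only a probability density, so bounding $\int |Du|\,\rho$ or $\int |Du|^q\rho$ requires quantitative use of the smoothing of the adjoint equation on short time intervals and a careful Grönwall-type iteration that must survive the superquadratic nonlinearity; one cannot simply invoke the comparison principle because the right-hand side $g(m)$ is not yet known to be bounded. Closing this loop — Lipschitz estimate for $u$ depending only on integral norms of $m$, which then improve the bounds on $m$, which then improve the estimate on $u$ — in a way that is uniform along the approximation is the technical core, and I expect the admissible range of $\alpha$ in A\ref{alpha3} to be exactly what makes this bootstrap terminate.
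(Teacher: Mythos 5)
Your proposal captures the paper's overall architecture accurately: regularize, derive integral (energy) estimates, apply the nonlinear adjoint method to get a Lipschitz bound on $u$, and close a bootstrap loop whose termination is governed by the smallness of $\alpha$; then upgrade to $C^\infty$ by Schauder and get positivity by the strong maximum principle. The paper proceeds in essentially this way, regularizing $g$ by double mollification rather than via a continuation parameter, and establishing the loop through Theorem~\ref{cor:cor2sec84}, Lemma~\ref{imphc0} and Theorem~\ref{uReg1c}.

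There is, however, a substantive omission in your account of what feeds the loop. You rely only on the ``basic energy-type identity'' (the paper's first-order estimate, Proposition~\ref{pehm}), which controls $\int\!\!\int m\,H(x,Du)$ and $\int\!\!\int G(\eta_\epsilon*m)$. That alone is not enough: the paper's Theorem~\ref{cor:cor2sec84}, which provides the polynomial bound on $\|g_\epsilon(m^\epsilon)\|_{L^\infty(0,T;L^p)}$ in terms of $\|Du^\epsilon\|_{L^\infty}$, is built on Theorem~\ref{emsupere}, which gives $m^\epsilon\in L^\infty(0,T;L^r(\Tt^d))$ for every $r<\tfrac{d(1+\mu)}{d(1+\mu)-2}$, \emph{uniformly in time and independently of any $L^\infty$ control on $Du$}. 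That uniform-in-time baseline integrability is obtained from the \emph{second-order} estimate (Proposition~\ref{elsoe1}), namely the bound on $\int_0^T\!\!\int g'(\eta_\epsilon*m^\epsilon)|D(\eta_\epsilon*m^\epsilon)|^2 + \tr(D^2_{pp}H\,(D^2u^\epsilon)^2)\,m^\epsilon$, together with Assumption~A\ref{aspdpph} which allows $\operatorname{div}(D_pH)$ to be absorbed (see~\eqref{magic} and~\eqref{key-supere}). Without this, the interpolation argument has no starting point $\beta_0$, and the closure fails. Relatedly, your third step phrases the $L^p$ bounds on $m$ as a consequence of a Lipschitz bound on $u$ that is already in hand; in fact those bounds (Section~\ref{elep}) are derived with an explicit polynomial dependence on the still-unknown quantity $\|D_pH\|_{L^\infty}$ precisely because they must enter the loop before the Lipschitz estimate is closed. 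You do acknowledge the circularity at the end, but the second-order estimate and the $L^\infty_t L^r_x$ integrability it yields are the concrete ingredients that make the bootstrap well-founded, and they should appear explicitly.
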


We observe that uniqueness of solutions to \eqref{eq:smfg0}-\eqref{itvp} follows from earlier results in \cite{ll1,ll2}.

The key assumptions A\ref{ah}-A\ref{alpha3} are discussed in Section \ref{subsec:assumptions}. An outline of the proof of this Theorem is described
in Section \ref{outline}. The various steps of the proof are detailed in the remaining Sections. In particular, in Section \ref{sec:sec12} we establish Lipschitz regularity for H-J equations (see Theorem \ref{uReg1c} stated in the next Section). 

The authors thank P. Cardaliaguet, P-L. Lions, A. Porretta and P. Souganidis for very useful comments and suggestions.

\section{Main assumptions and proof outline}
\label{ma}

We begin by discussing the main assumptions used in the present paper, and which cover a range of relevant problems.
This Section ends with the statement of the key Theorems and Lemmas, as well as an outline of the proof of Theorem \ref{teo:intro2}.

\subsection{Assumptions}\label{subsec:assumptions}

We assume our problem satisfies the following general hypotheses:
\begin{Assumption}
\label{ah}
The Hamiltonian
$H:\Tt^d\times \Rr^d \to \Rr$ is $C^\infty$ and 
\begin{enumerate}
\item 
For fixed $x$, the map $p\mapsto H(x,p)$ is strictly convex;  
\item 
Additionally, $H$ satisfies the coercivity condition
\[
\lim_{|p|\to \infty} \frac{H(x,p)}{|p|}=+\infty,   
\] 
and, without loss of generality, we require further that $H(x,p)\geq 1$. 
\end{enumerate}
\end{Assumption}

\begin{Assumption}
\label{ag} The function 
$g:\Rr_0^+\to \Rr$ is non-negative and increasing. 

Finally, $u_0, m_0\in C^\infty(\Tt^d)$ with $m_0\geq 0$ and $\int_{\Tt^d} m_0=1$. 
\end{Assumption}

Since $g$ is increasing and non-negative, it follows that there exists a
convex increasing function $G:\Rr^+_0\to\Rr$ such that $g(z)=G'(z)$.   

The Legendre transform of $H$ is given by
$L(x,v)=\sup\limits_p \left( -p\cdot v-H(x,p)\right)$.  
Then, if we set
\begin{equation}
\label{ele}
\hat L(x,p)=D_pH(x,p) p-H(x,p),  
\end{equation}
by standard properties of the Legendre transform
$
\hat L(x,p)=L(x,-D_pH(x,p)).
$

\begin{Assumption}
\label{aele}
For some $c, C>0$
\[
\hat L(x,p)\geq c H(x,p)-C.
\]
\end{Assumption}

For convenience and definiteness, we choose $g$ to be a power nonlinearity. 

\begin{Assumption}
\label{ag2}
$g(m)=m^\alpha$, for some $\alpha>0$.
\end{Assumption}

Our results can be generalized easily to the setting in which $g$ depends simultaneously on $m$ and $x$, provided appropriate conditions concerning the growth and the bounds of $g$ are assumed. This will not be pursued here to keep the presentation elementary.

\begin{Assumption}
\label{strong}
$H$
satisfies the following bounds
\[
|D_xH|, |D^2_{xx}H|\leq CH+C, 
\]
and, for any symmetric matrix $M$, and any $\delta>0$ there exists $C_\delta$ such that 
\[
\tr(D^2_{px} H M) \leq \delta \tr(D^2_{pp} H M^2)+C_{\delta} H. 
\]
\end{Assumption}

Because $H\geq 1$, the inequality in
the previous Assumption is equivalent to $|D_xH|, |D^2_{xx}H|\leq \tilde C H$, for
some constant $\tilde C$.  

\begin{Assumption}
\label{bcc}
$m_0\geq \kappa_0$
for some $\kappa_0\in \Rr^+$.
\end{Assumption}

The preceding hypotheses are the same as the corresponding ones in \cite{GPM2}.  
The next group of Assumptions
is distinct and encodes the
superquadratic nature of the Hamiltonian. 

\begin{Assumption}
\label{superq}
For some $0<\mu<1$, the Hamiltonian satisfies
\[
c_1|p|^{2+\mu}+C_1\leq H\leq c_2|p|^{2+\mu}+C_2,
\]where $c_i$ and $C_i$ are non-negative constants.
\end{Assumption}

\begin{Assumption}
\label{adph}
The following estimate holds
\[|D_pH(x,p)|^2\le C|p|^\mu H(x,p)+C.\]
\end{Assumption}

\begin{Assumption}
\label{aspdpph}
$H$ satisfies the following bounds
\[
\left|D^2_{xp}H\right|^2\leq CH^{\frac{2+2\mu}{2+\mu}},
\]
and, for any symmetric matrix $M$,
\[
\left|D^2_{pp}HM\right|^2\leq  C H^\frac{\mu}{2+\mu}\tr(D^2_{pp} H M M),  
\]
where $\mu$ and $C$ are given constants.
\end{Assumption}

Note that, in particular, the previous hypothesis implies that for
any function $u$, $H(x,Du)$ satisfies the following estimates:
\begin{equation}
\label{magic}
\big|\div(D_pH(x,Du))\big|^2\leq C H^\frac{\mu}{2+\mu}
\big(\tr D^2_{pp}HD^2uD^2u \big)+CH^\frac{2+2\mu}{2+\mu}. 
\end{equation}

\begin{Assumption}
\label{alpha3}
The exponent $\alpha$ satisfies $\alpha <\dfrac{2}{d(1+\mu)-2}.$
\end{Assumption}

\subsection{Outline of the proof}\label{outline}

The proof of Theorem \ref{teo:intro2} starts by considering a regularized version of \eqref{eq:smfg0}. It consists of
replacing $g(m)$ by: 
\begin{equation}\label{gep}
g_\epsilon(m)=\eta_\epsilon*g(\eta_\epsilon*m),
\end{equation} 
where $\eta_\epsilon$ is a standard, symmetric, mollifying kernel. This yields the regularized model:
\begin{equation}
\label{eq:smfg}
\begin{cases}
-u_t^\epsilon+H(x, Du^\epsilon)=\Delta u^\epsilon +g_\epsilon(m^\epsilon)\\
m_t^\epsilon-\div(D_pH m^\epsilon)=\Delta m^\epsilon.  
\end{cases}
\end{equation}
For convenience, we set $g_0=g$. The special structure of \eqref{gep} makes it possible to prove estimates for \eqref{eq:smfg} which are uniform in $\ep$. Existence of $C^\infty$ solutions for \eqref{eq:smfg}-\eqref{itvp} follows from standard arguments
using some of the ideas in \cite{cardaliaguet}, as detailed in \cite{PIM}.

The proof of Theorem \ref{teo:intro2} proceeds by considering
polynomial estimates for $g_\ep(m^\ep)$ in terms of $Du^\ep$ as stated
in the following Theorem: 

\begin{teo}\label{cor:cor2sec84}
Let $(u^\epsilon,m^\epsilon)$ be a solution of \eqref{eq:smfg}. Assume
A\ref{ah}-\ref{aspdpph} hold. 
Let $\theta>1$, $0\leq \upsilon\leq 1$.
For $\beta_0\in\left[1,\dfrac{d(1+\mu)}{d(1+\mu)-2}\right)$, let 
$\beta_{\upsilon,\te}=\dfrac{\theta\beta_0}{\theta+\upsilon-\theta\upsilon}$ 
and 
\begin{equation}\label{eq:eq3sec84}
r_\te=\frac{d(\theta-1)+2}{2}.
\end{equation}Suppose
\begin{equation}\label{eq:eq4sec84}
p_{\upsilon,\te}=\frac{\beta_{\upsilon,\te}}\alpha>1.
\end{equation}
Then, for $r=r_\te$ and $p=p_{\upsilon,\te}$, we have
\begin{equation}\label{capa}
\|g_\ep(m^\ep)\|_{L^\infty(0,T;L^p(\Tt^d))}\leq C+C\left\|Du^\ep\right\|_{L^\infty(0,T;L^\infty(\Tt^d))}^{\frac{(2+2\mu)r\upsilon\af}{\theta\be_0}}, 
\end{equation}
where $C$ is independent of $\epsilon$. 
\end{teo}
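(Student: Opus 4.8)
We sketch the argument. The plan is to reduce the estimate on $g_\ep(m^\ep)$ to a bound on $m^\ep$ itself, prove the desired bound at the endpoint exponent $\theta\be_0$ by a Fokker--Planck energy (Moser-type) estimate in which the superquadratic structure is used only to trade the drift against $\|Du^\ep\|_{L^\infty L^\infty}$, and then interpolate down to $\be_{\upsilon,\te}$. First, for the reduction: since $\eta_\ep$ is a probability density, two applications of Young's convolution inequality and the identity $\|f^\af\|_{L^p}=\|f\|_{L^{\af p}}^\af$ give, for every $t$,
\[
\|g_\ep(m^\ep)(\cdot,t)\|_{L^p(\Tt^d)}\le\|\eta_\ep*m^\ep(\cdot,t)\|_{L^{\af p}(\Tt^d)}^\af\le\|m^\ep(\cdot,t)\|_{L^{\af p}(\Tt^d)}^\af,
\]
and by \eqref{eq:eq4sec84} we have $\af p=\af p_{\upsilon,\te}=\be_{\upsilon,\te}$. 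Hence it suffices to bound $\|m^\ep\|_{L^\infty(0,T;L^{\be_{\upsilon,\te}}(\Tt^d))}$ by $C+CK^{(2+2\mu)r_\te\upsilon/(\theta\be_0)}$ uniformly in $\ep$, where $K:=\|Du^\ep\|_{L^\infty(0,T;L^\infty(\Tt^d))}$; from now on I drop the superscript $\ep$.

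Next, the endpoint estimate in $L^{\theta\be_0}$. Fix $\be=\theta\be_0$ (note $\be>1$), test the Fokker--Planck equation in \eqref{eq:smfg} against $\be m^{\be-1}$ (recall $m>0$ for the regularized system), integrate by parts, and absorb the cross term to obtain
\[
\frac{d}{dt}\int_{\Tt^d}m^\be+c\int_{\Tt^d}|Dm^{\be/2}|^2\le C\int_{\Tt^d}m^\be\,|D_pH(x,Du)|^2 .
\]
By A\ref{adph} and the lower bound in A\ref{superq} one has $|D_pH(x,Du)|^2\le C|Du|^\mu H+C\le CH^{\frac{2+2\mu}{2+\mu}}+C$, while the upper bound in A\ref{superq} gives $H(x,Du)\le CK^{2+\mu}+C$ pointwise; hence $|D_pH(x,Du)|^2\le C(1+K)^{2+2\mu}$ and the right-hand side is $\le C(1+K)^{2+2\mu}\int m^\be$. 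Now interpolate: applying the Gagliardo--Nirenberg inequality on $\Tt^d$ to $f=m^{\be/2}$, with $\|f\|_{L^2}$ interpolated between $\|Df\|_{L^2}$ and $\|f\|_{L^{2\be_0/\be}}=\|m\|_{L^{\be_0}}^{\be/2}$, and using the a priori bound $\|m\|_{L^\infty(0,T;L^{\be_0}(\Tt^d))}\le C$ (available uniformly in $\ep$ precisely because $\be_0\in[1,\tfrac{d(1+\mu)}{d(1+\mu)-2})$ is subcritical), one gets $\int m^\be\le C\|Dm^{\be/2}\|_{L^2}^{2a}+C$ with $1-a=\tfrac{2\be_0}{d(\be-\be_0)+2\be_0}$. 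Inserting this, using Young's inequality to absorb $\tfrac{c}{2}\int|Dm^{\be/2}|^2$ on the left, dropping the remaining gradient term, and integrating on $[0,T]$ yields $\sup_t\int m^\be\le C+C(1+K)^{(2+2\mu)/(1-a)}$. Since $\be=\theta\be_0$, the arithmetic identity $\tfrac1{1-a}=\tfrac{d(\be-\be_0)+2\be_0}{2\be_0}=\tfrac{d(\theta-1)+2}{2}=r_\te$ holds (by \eqref{eq:eq3sec84}), so
\[
\|m\|_{L^\infty(0,T;L^{\theta\be_0}(\Tt^d))}\le C+CK^{\frac{(2+2\mu)r_\te}{\theta\be_0}} .
\]

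Finally, the conclusion. From $\be_{\upsilon,\te}=\tfrac{\theta\be_0}{\theta(1-\upsilon)+\upsilon}\in[\be_0,\theta\be_0]$ one checks that the log-convexity interpolation $\|m\|_{L^{\be_{\upsilon,\te}}}\le\|m\|_{L^{\be_0}}^{1-\upsilon}\|m\|_{L^{\theta\be_0}}^{\upsilon}$ holds with interpolation weight exactly $\upsilon$. Combining this with the $L^{\be_0}$ and $L^{\theta\be_0}$ bounds above, taking the supremum in $t$, and raising to the power $\af$ gives $\|g_\ep(m)\|_{L^\infty(0,T;L^p(\Tt^d))}\le C+CK^{\frac{(2+2\mu)r_\te\upsilon\af}{\theta\be_0}}$, which is \eqref{capa} with $r=r_\te$ and $p=p_{\upsilon,\te}$.

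The heart of the matter, and the only genuine obstacle, is the endpoint step: one must verify that all interpolation and Young exponents are admissible — in particular $a\in(0,1)$ and $2\be_0/\be\ge1$, so that the standard form of Gagliardo--Nirenberg applies, which holds for $\theta\in(1,2]$ (larger $\theta$ is reached by iterating the estimate) — and, more substantially, one needs the $\ep$-uniform a priori bound $m\in L^\infty(0,T;L^{\be_0}(\Tt^d))$ in the subcritical range. The latter rests on the standard mean-field-game energy estimate together with the special mollified form \eqref{gep} of $g_\ep$, which yields $\|g_\ep(m)\|_{L^q}\le\|m\|_{L^{\af q}}^\af$ and thereby propagates all estimates uniformly in $\ep$. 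The remaining ingredients — the Fokker--Planck energy identity and the pointwise consequences of A\ref{superq} and A\ref{adph} — are routine.
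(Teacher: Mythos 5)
Your argument is correct and follows essentially the same route as the paper: the paper's proof is exactly the chain "Fokker--Planck energy estimate (Lemma \ref{lpi1}) $\to$ endpoint bound $\int (m^\ep)^{\theta\be_0}\le C+C\||D_pH|^2\|_{L^\infty}^{r_\te}$ (Corollary \ref{cpi1}) $\to$ interpolation with the subcritical uniform bound of Theorem \ref{emsupere} (Lemma \ref{lem:lem1sec84}) $\to$ conversion of $|D_pH|^2$ into $\|Du^\ep\|_{L^\infty}^{2+2\mu}$ via A\ref{adph} and A\ref{superq}", which is what you do, merely substituting the pointwise bound on $|D_pH|^2$ earlier in the computation. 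The one cosmetic divergence is that you use Gagliardo--Nirenberg directly (forcing the restriction $2\be_0/\be\ge 1$, i.e.\ $\theta\le 2$, which you correctly flag and repair by iteration), whereas the paper interpolates in H\"older between $L^{\be_0}$ and $L^{2^*\be_1/2}$ and then applies Sobolev (Lemmas \ref{lpi2}--\ref{lpi3}, Proposition \ref{ppi1}), which handles all $\theta>1$ in one step.
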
Theorem \ref{cor:cor2sec84} is proven in Section
\ref{sec84}. Then we establish $L^\infty$ bounds for $u^\ep$ in terms
of $g_\ep(m^\ep)$, as in the following Lemma: 

\begin{Lemma}
\label{imphc0}
Suppose $(u^\epsilon,m^\epsilon)$ is a solution of \eqref{eq:smfg} and $H$ satisfies 
A\ref{ah}. Then, if $p>\frac{d}{2},$
\begin{equation}\label{capb}
\|u^\epsilon\|_{L^\infty(\Tt^d\times[0,T])}\leq C+
C\|g_\ep(m^\ep)\|_{L^\infty(0,T;L^{p}(\Tt^d))}, 
\end{equation}
where $C$ is independent of $\epsilon$. 
\end{Lemma}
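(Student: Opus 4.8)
The estimate \eqref{capb} is a standard $L^\infty$ bound for the Hamilton--Jacobi equation
$-u_t^\ep + H(x,Du^\ep) = \Delta u^\ep + g_\ep(m^\ep)$, obtained by a Bernstein/representation argument or, most cleanly, by the stochastic optimal control representation. The plan is to use the latter. Since $H$ satisfies A\ref{ah} (strict convexity and superlinear coercivity), $u^\ep$ has the value-function representation
\[
u^\ep(x,t) = \inf_{\mathbf{v}} \mathbb{E}\Bigl[ \int_t^T \bigl( L(X_s, \mathbf{v}_s) + g_\ep(m^\ep)(X_s,s)\bigr)\, ds + u_T(X_T)\Bigr],
\]
where $X_s$ solves $dX_s = \mathbf{v}_s\, ds + \sqrt{2}\, dW_s$, $X_t = x$, and $L$ is the Legendre transform of $H$. (The $g_\ep(m^\ep)$ term enters as a running cost because it is on the right-hand side of the equation for $-u_t^\ep + H$.) The lower bound on $u^\ep$ is immediate: $L$ is bounded below (a consequence of coercivity of $H$), $g_\ep(m^\ep)\geq 0$ by A\ref{ag}--A\ref{ag2} and the positivity of the mollifier, and $u_T$ is bounded; hence $u^\ep \geq -C$.

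For the upper bound, I would choose the admissible control $\mathbf{v}_s \equiv 0$, so that $X_s = x + \sqrt{2}(W_s - W_t)$ is just a rescaled Brownian motion. Then
\[
u^\ep(x,t) \leq \mathbb{E}\Bigl[ \int_t^T \bigl( L(X_s,0) + g_\ep(m^\ep)(X_s,s)\bigr)\, ds + u_T(X_T)\Bigr] \leq C + \int_t^T \mathbb{E}\bigl[ g_\ep(m^\ep)(X_s,s)\bigr]\, ds,
\]
using that $L(x,0)$ and $u_T$ are bounded on the torus. The key point is to bound $\mathbb{E}[g_\ep(m^\ep)(X_s,s)]$ by $\|g_\ep(m^\ep)(\cdot,s)\|_{L^p(\Tt^d)}$: writing $\rho_s$ for the density of $X_s$ on $\Tt^d$ (the heat kernel at time $s-t$), Hölder's inequality gives $\mathbb{E}[g_\ep(m^\ep)(X_s,s)] = \int_{\Tt^d} g_\ep(m^\ep)(y,s)\rho_s(y)\, dy \leq \|g_\ep(m^\ep)(\cdot,s)\|_{L^p}\|\rho_s\|_{L^{p'}}$, where $p' = p/(p-1)$. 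For this to give a uniform bound we need $\|\rho_s\|_{L^{p'}}$ integrable in $s$ near $s=t$. By the standard heat-kernel bound $\|\rho_s\|_{L^{p'}(\Tt^d)} \leq C(s-t)^{-\frac{d}{2}(1-1/p')} = C(s-t)^{-\frac{d}{2p}}$ for small $s-t$, and this is integrable over $(t,T)$ precisely when $\frac{d}{2p} < 1$, i.e. $p > d/2$ — which is exactly the hypothesis. Integrating, $\int_t^T \|\rho_s\|_{L^{p'}}\, ds \leq C$ uniformly in $t$, and therefore $u^\ep(x,t) \leq C + C\|g_\ep(m^\ep)\|_{L^\infty(0,T;L^p(\Tt^d))}$, with $C$ depending only on $d$, $p$, $T$, $\|u_T\|_\infty$, and $\sup_x L(x,0)$ — in particular independent of $\ep$. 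Combining with the lower bound yields \eqref{capb}.

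The only real obstacle is justifying the value-function representation and in particular the admissibility of the zero control and the verification that the infimum is attained/approximated — but since we only need the \emph{upper} bound via one explicit control and the \emph{lower} bound via nonnegativity, no optimal control need be constructed; one just needs Itô's formula applied to $u^\ep(X_s,s)$ together with the fact that $u^\ep \in C^\infty$, which is already known for the regularized system. An alternative, purely PDE route avoiding probability altogether: test the equation appropriately, or more simply observe that $w(x,t):=u^\ep(x,t) - \int_t^T \|g_\ep(m^\ep)(\cdot,s)\|_{L^p}\,(\text{kernel factor})\,ds$ — but the cleanest fully analytic proof is to note $v(x,t) = \int_t^T (P_{s-t} g_\ep(m^\ep)(\cdot,s))(x)\, ds + \|u_T\|_\infty$ (with $P_\tau$ the heat semigroup) is a supersolution of $-v_t - \Delta v \geq g_\ep(m^\ep)$ with $-v_t -\Delta v + H(x,Dv) \geq -v_t - \Delta v + 1 > g_\ep(m^\ep)$ fails in general, so the probabilistic comparison is more robust; I would therefore present the control-theoretic argument. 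Either way the heat-kernel $L^{p'}$ bound and the threshold $p>d/2$ are the heart of the matter, and everything else is routine.
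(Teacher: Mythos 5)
Your proof is correct and is essentially the paper's own argument: the paper applies its stochastic Lax--Hopf estimate (Proposition \ref{plb}) with $b=0$ and $\zeta_0=\delta_{x}$, which is precisely the zero-control comparison in your value-function representation (the resulting $\zeta$ \emph{is} the heat kernel), and then invokes the same $L^{q}$ heat-kernel bound $\|\te(\cdot,t)\|_q\leq Ct^{-d/(2p)}$ with the same integrability threshold $p>d/2$. The only difference is cosmetic: you phrase the comparison probabilistically via It\^o and an explicit control, while the paper uses the adjoint/Lax--Hopf PDE formulation, and you make the lower bound (from $g_\ep\geq 0$ and $L$ bounded below) explicit where the paper only remarks on it.
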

The proof of Lemma \ref{imphc0} is presented in Section \ref{pub}. To
estimate $Du^\ep$ in terms of $g_\ep(m^\epsilon)$ we apply the nonlinear adjoint
method (see \cite{E3}), which yields the following estimate: 

\begin{teo}\label{uReg1c}
Suppose  A\ref{ah}-A\ref{alpha3} hold. 
Let $(u^\ep,m^\ep)$ be a solution of \eqref{eq:smfg} and assume that $p>d$.  
Then 
\begin{align}\nonumber
\|Du^\ep\|_{L^\infty(0,T;L^\infty(\Tt^d))}\leq &
C+C\|g_\ep(m^\ep)\|^\frac{1}{1-\mu}_{L^\infty(0,T;L^p(\Tt^d))}\\\label{capc}
&+C\|g_\ep(m^\ep)\|^\frac{1}{1-\mu}_{L^\infty(0,T;L^p(\Tt^d))}\|u\|_{L^\infty(0,T;L^\infty(\Tt^d))}^\frac{1}{1-\mu}, 
\end{align}
where $C$ is independent of $\epsilon$. 
\end{teo}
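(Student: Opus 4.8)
The plan is to prove \eqref{capc} by applying the nonlinear adjoint method to the Hamilton--Jacobi equation in \eqref{eq:smfg}, treating $\|u^\ep\|_{L^\infty(\Tt^d\times[0,T])}$ and $\|g_\ep(m^\ep)\|_{L^\infty(0,T;L^p(\Tt^d))}$ as given; write $K:=1+\|u^\ep\|_{L^\infty(\Tt^d\times[0,T])}+\|g_\ep(m^\ep)\|_{L^\infty(0,T;L^p(\Tt^d))}$. Since $u^\ep\in C^\infty$, the quantity $N:=\|Du^\ep\|_{L^\infty(0,T;L^\infty(\Tt^d))}$ is finite, and the goal is an a priori estimate of the form \eqref{capc} for $N$. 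Fix $(x_0,t_0)$ at which $|Du^\ep|$ is, up to a fixed factor, maximal; set $\mathcal L\phi:=-\phi_t+D_pH(x,Du^\ep)\cdot D\phi-\Delta\phi$ and let $\rho=\rho^{x_0,t_0}\ge 0$ solve the adjoint problem $\rho_t-\div\!\big(D_pH(x,Du^\ep)\rho\big)=\Delta\rho$ on $(t_0,T]$ with $\rho(\cdot,t_0)=\delta_{x_0}$; this is the Fokker--Planck operator of \eqref{eq:smfg} linearized about $u^\ep$ with a Dirac initial datum (equivalently, the law of the optimally controlled diffusion started at $x_0$ at time $t_0$). Then $\int_{\Tt^d}\rho(\cdot,t)\,dx=1$ for all $t\in[t_0,T]$, and for any smooth $\phi$ one has the representation $\phi(x_0,t_0)=\int_{t_0}^T\!\!\int_{\Tt^d}\rho\,\mathcal L\phi\,dx\,dt+\int_{\Tt^d}\phi(\cdot,T)\rho(\cdot,T)\,dx$.

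First I would record the parabolic smoothing estimates $\|\rho(\cdot,t)\|_{L^q(\Tt^d)}\le C(t-t_0)^{-\frac d2(1-1/q)}$ and the analogous bound for $\|D\rho(\cdot,t)\|_{L^q(\Tt^d)}$, made uniform in $\ep$ and quantitative in the drift $D_pH(x,Du^\ep)$ (which is \emph{not} a priori bounded independently of $N$). Here the hypothesis $p>d$ enters: it forces $t\mapsto\|\rho(\cdot,t)\|_{L^{p'}}$ and $t\mapsto\|D\rho(\cdot,t)\|_{L^{p'}}$ to be integrable on $(t_0,T)$, whence, by Hölder's inequality, $\int_{t_0}^T\!\!\int\rho\,|g_\ep(m^\ep)|$ and $\int_{t_0}^T\!\!\int\rho\,g_\ep(m^\ep)^2$ are controlled by $\|g_\ep(m^\ep)\|_{L^\infty(0,T;L^p(\Tt^d))}$. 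Applying the representation to $\phi=u^\ep$, for which $\mathcal Lu^\ep=g_\ep(m^\ep)+\hat L(x,Du^\ep)$ by \eqref{eq:smfg} and \eqref{ele}, and using A\ref{aele} gives the first-order estimate $\int_{t_0}^T\!\!\int_{\Tt^d}\rho\,H(x,Du^\ep)\,dx\,dt\le CK$ — the $\rho$-weighted substitute, in the superquadratic regime, for the Gagliardo--Nirenberg/Crandall--Amann estimates of the subquadratic theory.

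The heart of the argument is a second-order adjoint estimate. A computation gives, for $\Phi:=H(x,Du^\ep)$ (equivalently for $w:=\tfrac12|Du^\ep|^2$), an identity of the schematic form $\mathcal L\Phi=-\tr\!\big(D^2_{pp}H\,D^2u^\ep\,D^2u^\ep\big)-\tr(D^2_{xx}H)-2\tr\!\big(D^2_{xp}H\,D^2u^\ep\big)+D_pH\cdot Dg_\ep(m^\ep)$; inserting this into the representation makes $\int_{t_0}^T\!\!\int\rho\,\tr(D^2_{pp}H\,D^2u^\ep\,D^2u^\ep)$ appear on the favorable side (it is nonnegative by the convexity in A\ref{ah}). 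I would then absorb $\tr(D^2_{xp}H\,D^2u^\ep)$ into that term via the trace inequality of A\ref{strong}, bound $\tr(D^2_{xx}H)$ through A\ref{strong} and the first-order estimate, and treat $D_pH\cdot Dg_\ep(m^\ep)$ — after an integration by parts in $x$ moving the derivative off $g_\ep(m^\ep)$ — using A\ref{adph}, A\ref{aspdpph}, the ensuing estimate \eqref{magic}, and the $\rho$-bounds above. The outcome is a control of $|Du^\ep(x_0,t_0)|$ together with $\int\rho\,\tr(D^2_{pp}H\,D^2u^\ep\,D^2u^\ep)$ by $CK$ plus a term that is superlinear in $N$ but carries an exponent strictly below $2+\mu$, so that one reaches an inequality of the form $N^2\le C+C\,N^{1+\mu}\,\|g_\ep(m^\ep)\|_{L^\infty(0,T;L^p(\Tt^d))}\big(1+\|u^\ep\|_{L^\infty}\big)$; since $0<\mu<1$, solving for $N$ yields exactly \eqref{capc}.

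The main obstacle I anticipate is this last step. In the subquadratic case the superlinear-in-$H$ terms are removed by Sobolev-type inequalities; here every bit of extra integrability must be extracted from the single quantity $\int\rho\,\tr(D^2_{pp}H\,D^2u^\ep\,D^2u^\ep)$, which demands a tight matching between the parabolic smoothing of $\rho$ and the structural hypotheses A\ref{strong}, A\ref{adph}, A\ref{aspdpph}, \eqref{magic}. Making the exponents in this matching balance — and, in particular, leaving a net power $N^{1+\mu}$ rather than $N^{2+\mu}$ in front of the data — is the delicate point, and it is precisely where the gap $p>d$ and the restriction $0<\mu<1$ are genuinely used. A secondary burden is proving the $\ep$- and $N$-quantitative smoothing estimates for $\rho$ and $D\rho$, since the adjoint drift $D_pH(x,Du^\ep)$ is controlled only through the very estimates one is deriving, so some care with the order in which the bounds are established (or a bootstrap in $N$) is needed.
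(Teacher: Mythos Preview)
Your setup and the first-order bound $\int_{t_0}^T\!\int\rho\,H\le CK$ match the paper (this is Corollary~\ref{cram12}), but the proposal has a genuine gap in the $\rho$-estimates, and the second-order step is organized differently from the paper's.

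The gap is the heat-kernel smoothing $\|\rho(\cdot,t)\|_{L^q},\ \|D\rho(\cdot,t)\|_{L^q}\lesssim (t-t_0)^{-\frac d2(1-1/q)}$. For a Fokker--Planck equation with drift $b=D_pH(x,Du^\ep)$ of size $\sim N^{1+\mu}$, such bounds carry constants that grow at least like $e^{c\|b\|_{L^\infty}^2 T}$, so feeding them into your Step~4 reintroduces an uncontrolled power of $N$; the ``bootstrap in $N$'' you flag at the end cannot close. The paper never uses $D\rho\in L^q$. Its replacement is Proposition~\ref{pram1}: multiply the adjoint equation by $\nu\rho^{\nu-1}$ with $0<\nu<1$, use $\rho^\nu\le C_\delta+\delta\rho$, A\ref{adph}, and $\int\rho H\le C(1+\|u^\ep\|_{L^\infty})$ to obtain
\[
\int_\tau^T\!\!\int_{\Tt^d}\bigl|D(\rho^{\nu/2})\bigr|^2\,dx\,dt\ \le\ C+C\,N^{\mu}\bigl(1+\|u^\ep\|_{L^\infty}\bigr).
\]
The dependence on $N$ is polynomial of degree exactly $\mu$, and this is what makes the argument close with $0<\mu<1$.

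The paper also does not run a Bernstein computation on $\Phi=H(x,Du^\ep)$; the quantity $\int\rho\,\tr(D^2_{pp}H\,(D^2u^\ep)^2)$ and \eqref{magic} play no role in this proof. Instead it differentiates the Hamilton--Jacobi equation once in a fixed direction $\xi$, represents $u_\xi(x_0,\tau)$ against $\rho$, and the only delicate term is $\int(g_\ep(m))_\xi\rho=-\int g_\ep(m)\,\rho_\xi$. Writing $\rho_\xi=\tfrac{2}{\nu}\rho^{1-\nu/2}(\rho^{\nu/2})_\xi$ and applying H\"older with exponents $(a,b,2)$ bounds this by $\|g_\ep(m)\|_{L^\infty(L^a)}\,\|\rho^{1-\nu/2}\|_{L^2(L^b)}\,\|D\rho^{\nu/2}\|_{L^2(L^2)}$; both $\rho$-factors are controlled by $[C+CN^\mu(1+\|u^\ep\|_{L^\infty})]^{1/2}$ via Proposition~\ref{pram1}, interpolation, and Sobolev, and letting $\nu\uparrow 1$ forces $a\downarrow d$, which is exactly where $p>d$ is used. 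This yields Theorem~\ref{uReg1}, i.e.\ $N\le C+C\|u^\ep\|_{L^\infty}+C\|g_\ep(m^\ep)\|_{L^\infty(L^p)}\,N^\mu(1+\|u^\ep\|_{L^\infty})$, and \eqref{capc} follows by Young's inequality with exponents $\tfrac{1}{\mu},\tfrac{1}{1-\mu}$. Your Bernstein route via $\Phi=H$ could in principle be closed, but only after replacing the $L^q$ smoothing of $D\rho$ by this $L^2$ bound on $D\rho^{\nu/2}$.
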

This Theorem is established in Section \ref{ram}. To prove Theorem \ref{teo:intro2}
we combine the estimates in Theorem \ref{cor:cor2sec84}, Lemma
\ref{imphc0} and Theorem \ref{uReg1c}, obtaining Lipschitz regularity
for $u^\ep$. This is done in Section \ref{sec:sec12}. It follows from \eqref{capa}, combined with 
\eqref{capc}, that $$\|Du^\ep\|_{L^\infty(\Tt^d\times[0,T])}\leq C+C\|Du^\ep\|_{L^\infty(\Tt^d\times[0,T])}^\zeta,$$
where, if $\alpha$ is small enough, $\zeta<1$.
The precise bound for $\alpha$ is the one given in Assumption A\ref{alpha3}.
Lastly, we obtain Lipschitz regularity:

\begin{teo}\label{teo:sec12}
Let $(u^\epsilon,m^\epsilon)$ be a solution of \eqref{eq:smfg}-\eqref{itvp}. Suppose that
A\ref{ah}-\ref{alpha3} hold. 
Then, $Du^\epsilon\in L^\infty(\Tt^d\times\left[0,T\right])$, with bounds uniform in $\epsilon$.
\end{teo}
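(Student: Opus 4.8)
\textbf{Proof strategy for Theorem \ref{teo:sec12}.} The plan is to derive a single self-improving inequality for the quantity $X:=\|Du^\epsilon\|_{L^\infty(\Tt^d\times[0,T])}$ by chaining together the three estimates of Theorem \ref{cor:cor2sec84}, Lemma \ref{imphc0} and Theorem \ref{uReg1c}. A preliminary remark is that, since $(u^\epsilon,m^\epsilon)$ is a $C^\infty$ solution of \eqref{eq:smfg} on the compact set $\Tt^d\times[0,T]$, the norms $\|Du^\epsilon\|_{L^\infty(0,T;L^\infty(\Tt^d))}$, $\|u^\epsilon\|_{L^\infty(\Tt^d\times[0,T])}$ and $\|g_\epsilon(m^\epsilon)\|_{L^\infty(0,T;L^p(\Tt^d))}$ are \emph{a priori} finite; the whole point is to bound $X$ by a constant \emph{independent of $\epsilon$}.

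First I would fix the free parameters $\theta>1$, $\upsilon\in(0,1]$ and $\beta_0\in\big[1,\tfrac{d(1+\mu)}{d(1+\mu)-2}\big)$ of Theorem \ref{cor:cor2sec84} so that $p:=p_{\upsilon,\theta}=\beta_{\upsilon,\theta}/\alpha$ satisfies $p>d$ — this automatically delivers the hypotheses $p>d/2$ of Lemma \ref{imphc0}, $p>d$ of Theorem \ref{uReg1c} and $p>1$ of \eqref{eq:eq4sec84}, all for the \emph{same} exponent. With this $p$ fixed, I would substitute the $L^\infty$ bound of Lemma \ref{imphc0}, namely $\|u^\epsilon\|_{L^\infty}\le C+C\|g_\epsilon(m^\epsilon)\|_{L^\infty(0,T;L^p)}$, into the right-hand side of \eqref{capc}, and then use $\mu\in(0,1)$ together with the elementary inequalities $(a+b)^q\le C(a^q+b^q)$ and $t^q\le 1+t^{q'}$ ($q\le q'$, $t\ge0$) to absorb the lower-order powers. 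After these routine manipulations one is led to
\[
\|Du^\epsilon\|_{L^\infty(0,T;L^\infty(\Tt^d))}\;\le\; C+C\,\|g_\epsilon(m^\epsilon)\|_{L^\infty(0,T;L^p(\Tt^d))}^{\frac{2}{1-\mu}},
\]
with $C$ independent of $\epsilon$.

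Next I would feed \eqref{capa} into this last bound. Theorem \ref{cor:cor2sec84} gives $\|g_\epsilon(m^\epsilon)\|_{L^\infty(0,T;L^p)}\le C+C\,X^{\gamma}$ with $\gamma=\tfrac{(2+2\mu)\,r_\theta\,\upsilon\,\alpha}{\theta\,\beta_0}$, so the display above becomes
\[
X\;\le\; C+C\,X^{\zeta},\qquad \zeta\;=\;\frac{2(2+2\mu)\,r_\theta\,\upsilon\,\alpha}{(1-\mu)\,\theta\,\beta_0}.
\]
The parameters $\theta,\upsilon,\beta_0$ must now be chosen so that, on top of $p_{\upsilon,\theta}>d$, one also has $\zeta<1$; the sharp threshold on $\alpha$ for which these two requirements admit a common solution $(\theta,\upsilon,\beta_0)$ is exactly $\alpha<\tfrac{2}{d(1+\mu)-2}$, i.e.\ Assumption A\ref{alpha3}. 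Once $\zeta<1$, the elementary fact that a nonnegative real $X$ satisfying $X\le C+CX^{\zeta}$ with $0\le\zeta<1$ is bounded by a constant depending only on $C$ and $\zeta$ yields $\|Du^\epsilon\|_{L^\infty(\Tt^d\times[0,T])}\le C'$ with $C'$ independent of $\epsilon$, which is the assertion.

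The main obstacle is the bookkeeping in that last step: one has to verify that the constraint $p_{\upsilon,\theta}>d$ (needed so that Lemma \ref{imphc0} and Theorem \ref{uReg1c} apply to the very exponent produced by Theorem \ref{cor:cor2sec84}) and the constraint $\zeta<1$ (needed so that the inequality is genuinely self-improving) are compatible, and the extremal such choice of $(\theta,\upsilon,\beta_0)$ is precisely what pins down the bound in A\ref{alpha3}. A secondary, more routine, point is checking that after inserting Lemma \ref{imphc0} into \eqref{capc} the cross term $\|g_\epsilon(m^\epsilon)\|^{1/(1-\mu)}_{L^\infty(0,T;L^p)}\|u^\epsilon\|^{1/(1-\mu)}_{L^\infty}$ is indeed the dominant contribution, so that the exponent $2/(1-\mu)$ recorded above is the correct one.
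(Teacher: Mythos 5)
The chaining of Theorem \ref{cor:cor2sec84}, Lemma \ref{imphc0} and Theorem \ref{uReg1c} into a self-improving inequality $X\le C+CX^\zeta$ is exactly the paper's strategy (Lemma \ref{lem:lem1sec12} and Proposition \ref{teo:teo1sec12}), but the way you set up the exponents loses a strict portion of the admissible range of $\alpha$, so the proof as sketched does not reach the full hypothesis A\ref{alpha3}.

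The problem is that you insist on a \emph{single} exponent $p=p_{\upsilon,\theta}>d$ for both Lemma \ref{imphc0} (which only needs $p>d/2$) and Theorem \ref{uReg1c} (which needs $p>d$). Substituting \eqref{capb} into the cross term of \eqref{capc} then yields $\|g_\epsilon(m^\epsilon)\|_{L^p}^{2/(1-\mu)}$, hence $\zeta=\tfrac{2}{1-\mu}\cdot\tfrac{(2+2\mu)r_\theta\upsilon\alpha}{\theta\beta_0}$, i.e.\ \emph{twice} the exponent coming from the $p>d$ constraint. The paper's Lemma \ref{lem:lem1sec12} instead introduces a second independent triple $(\tilde\theta,\tilde\upsilon,\tilde p)$ and applies Lemma \ref{imphc0} only with $\tilde p=p_{\tilde\upsilon,\tilde\theta}>d/2$, so that
\[
\zeta=\frac{1}{1-\mu}\left(\frac{(2+2\mu)r_\theta\upsilon\alpha}{\theta\beta_0}+\frac{(2+2\mu)r_{\tilde\theta}\tilde\upsilon\alpha}{\tilde\theta\beta_0}\right).
\]
The second term has a much weaker integrability constraint; in particular $\tilde\upsilon$ can be sent to $0$ whenever $\alpha<2\beta_0/d$, making that contribution negligible, while your version is locked into two copies of the expensive term. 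Optimizing each scheme (send $\theta,\tilde\theta\to\infty$, $\beta_0\to d(1+\mu)/(d(1+\mu)-2)$, and take the smallest $\upsilon,\tilde\upsilon$ compatible with $p>d$, respectively $\tilde p>d/2$), the paper's feasible range closes up at $\alpha<2/(d(1+\mu)-2)$, which is A\ref{alpha3}, whereas your single-exponent scheme closes up at $\alpha<(3+\mu)/\bigl(2[d(1+\mu)-2]\bigr)$, strictly smaller for $\mu<1$. Your assertion that "the sharp threshold \ldots is exactly $\alpha<2/(d(1+\mu)-2)$" is therefore false for the scheme you actually wrote down. The fix is the paper's: decouple the two exponents as in Lemma \ref{lem:lem1sec12}, and then actually carry out (rather than assert) the feasibility check that $p_{\upsilon,\theta}>d$, $p_{\tilde\upsilon,\tilde\theta}>d/2$ and \eqref{eq:eq1sec12} are simultaneously satisfiable under A\ref{alpha3}, which the paper isolates in Lemma \ref{techlemma} and verifies by symbolic computation.
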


We now present the proof of Theorem \ref{teo:intro2}.

\begin{proof}[Proof of Theorem \ref{teo:intro2}.]

By theorem \ref{teo:sec12}, we have Lipschitz regularity for $u^\ep$, uniformly in $\ep$. 
Thus 
the growth of the Hamiltonian plays no role in further gain of regularity.
Then, a number of additional estimates can be derived, see \cite{GPM2}.
These ensure, in particular, that $u^\ep$ and $m^\ep$ are H\"older continuous, uniformly in $\epsilon$. 
Thus, through some subsequence we have that $u^\ep\to u$ in $\mathcal{C}^{0,\ga}(\Tt^d\times\left[0,T\right])$ and $m^\ep\to m$ in $\mathcal{C}^{0,\ga}(\Tt^d\times\left[0,T\right])$, as $\ep\to 0 $.
This shows that $u$ is a (viscosity) solution of the first equation in \eqref{eq:smfg0}.
Furthermore, additional bounds on $D^2u^\epsilon$ provide enough compactness to 
conclude that $m$ solves
\[
m_t-\div(D_pH(x,Du)m)=\Delta m,
\]
as a weak solution, 
i.e.,
$$\int_0^T\int_{\Tt^d}\left(-\phi_t+D_pHD\phi-\Delta\phi\right)mdxdt\,=\,0,$$f
or every $\phi\in\mathcal{C}^\infty_c(\Tt^d)$. 
By the results in \cite{GPM2}, we have uniform bounds in every Sobolev space for $(u^\ep,m^\ep)$.
Finally, observing that $(u,m)$ satisfies the same estimates as $(u^\ep,m^\ep)$, we obtain existence of smooth solutions. 
\end{proof}

The rest of this paper is organized as follows: the next Section presents some elementary estimates from \cite{GPM2}. In Section \ref{igain}, we obtain higher integrability for $m^\ep$, see Theorem \ref{emsupere}. The proof of Theorem \ref{cor:cor2sec84} is presented in Section \ref{sec84}. In Sections \ref{pub}-\ref{ram}, we establish Lemma \ref{imphc0} and Theorem \ref{uReg1c}. Lipschitz regularity for the Hamilton-Jacobi equation is established in Section \ref{sec:sec12}.

\section{Elementary estimates}\label{sec:eles}
\label{apes}

Next we recall several estimates for solutions of \eqref{eq:smfg}. These have appeared
(either in the present form or in related versions)
in \cite{ll1,ll2,CLLP,LIMA,E2}. For ease of presentation, we omit here the proofs
which can be found in \cite{GPM2}.
\begin{Proposition}[Stochastic Lax-Hopf estimate]
\label{plb}
Suppose A\ref{ah} holds. Let $(u^\epsilon,m^\epsilon)$ be a solution
to \eqref{eq:smfg}.  
Then, for any smooth vector field $b:\Tt^d\times(t,T)\to\Rr^d$,
and any solution to 
\begin{equation}
\label{diflaw2}
\zeta_s+\div(b\zeta)=\Delta \zeta, 
\end{equation}
with $\zeta(x,t)=\zeta_0$ 
we have
 the following upper bound: 
\begin{align}
\label{lhe2}
\int_{\Tt^d} u^\epsilon(x,t)\zeta_0(x) dx \le
&\int_t^T\int_{\Tt^d}\bigl(L(y,b(y,s))
+g_\epsilon(m^\epsilon)(y,s)\bigr)\zeta(y,s) dy ds\\\nonumber
&+\int_{\Tt^d} u^\epsilon_T(y) \zeta(y,T). 
\end{align} 
\end{Proposition}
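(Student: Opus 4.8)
The plan is to pair the Hamilton--Jacobi equation against $\zeta$ and integrate by parts on $\Tt^d$. I would set
\[
\psi(s)=\int_{\Tt^d} u^\epsilon(x,s)\,\zeta(x,s)\,dx,\qquad s\in[t,T],
\]
which is smooth in $s$: since $b$ is smooth and $(u^\epsilon,m^\epsilon)$ is a classical solution of \eqref{eq:smfg}, the linear parabolic problem \eqref{diflaw2} admits a smooth solution $\zeta$ on the compact manifold $\Tt^d$, so $\psi$ may be differentiated under the integral sign with no boundary terms.

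First I would differentiate $\psi$, using the first equation of \eqref{eq:smfg} written as $u^\epsilon_s=H(x,Du^\epsilon)-\Delta u^\epsilon-g_\epsilon(m^\epsilon)$ and \eqref{diflaw2} written as $\zeta_s=\Delta\zeta-\div(b\zeta)$. After integration by parts the two second-order contributions cancel, $\int(-\Delta u^\epsilon)\zeta+\int u^\epsilon\Delta\zeta=0$, while $\int u^\epsilon\big(-\div(b\zeta)\big)=\int Du^\epsilon\cdot b\,\zeta$, so
\[
\psi'(s)=\int_{\Tt^d}\big(H(x,Du^\epsilon)+Du^\epsilon\cdot b\big)\zeta\,dx-\int_{\Tt^d} g_\epsilon(m^\epsilon)\,\zeta\,dx.
\]
Integrating over $[t,T]$ and using $\zeta(\cdot,t)=\zeta_0$ together with the terminal condition $u^\epsilon(\cdot,T)=u^\epsilon_T$ gives
\[
\int_{\Tt^d} u^\epsilon(x,t)\,\zeta_0\,dx=\int_{\Tt^d} u^\epsilon_T\,\zeta(\cdot,T)\,dx-\int_t^T\!\!\int_{\Tt^d}\big(H(x,Du^\epsilon)+Du^\epsilon\cdot b\big)\zeta\,dx\,ds+\int_t^T\!\!\int_{\Tt^d} g_\epsilon(m^\epsilon)\,\zeta\,dx\,ds.
\]

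The final step is the Fenchel--Young inequality: from $L(x,v)=\sup_p\big(-p\cdot v-H(x,p)\big)$, choosing $p=Du^\epsilon(x,s)$ and $v=b(x,s)$ yields the pointwise bound $-H(x,Du^\epsilon)-Du^\epsilon\cdot b\le L(x,b)$. Multiplying by $\zeta$ and integrating — which preserves the inequality because $\zeta\ge0$, a consequence of the maximum principle for \eqref{diflaw2} applied to the nonnegative datum $\zeta_0$ (the case of interest here) — and substituting into the identity above produces exactly \eqref{lhe2}.

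This is essentially a duality computation and presents no serious obstacle. The only points requiring care are the legitimacy of differentiating under the integral and of the integrations by parts — guaranteed by smoothness of $b$ and of the classical solution, and by compactness of $\Tt^d$, so no boundary terms appear — and the sign condition $\zeta\ge0$: it is precisely nonnegativity of $\zeta$ that lets the pointwise Legendre inequality survive after pairing against $\zeta$, which is why the estimate is formulated with, and subsequently applied to, nonnegative initial data $\zeta_0$.
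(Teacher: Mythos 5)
Your duality computation is correct and is precisely the standard argument the paper defers to \cite{GPM2}: pair the Hamilton--Jacobi equation against the adjoint transport equation \eqref{diflaw2}, integrate by parts on $\Tt^d$ (so the Laplacian terms cancel and the drift term produces $\int Du^\epsilon\cdot b\,\zeta$), and close with the Fenchel--Young inequality $-p\cdot v-H(x,p)\le L(x,v)$ at $p=Du^\epsilon$, $v=b$. You are also right to flag the tacit hypothesis $\zeta\ge 0$ (needed so the pointwise Legendre inequality survives the pairing): the proposition leaves it implicit, but the parabolic maximum principle gives $\zeta\ge 0$ whenever $\zeta_0\ge 0$, and in every application here $\zeta_0$ is a Dirac mass.
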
We notice that, for $b=-D_pH(x,Du)$, the inequality in \eqref{lhe2} is attained.

\begin{Proposition}[First-order estimate]
\label{pehm}
Assume A\ref{ah}-\ref{aele} hold. 
Let $(u^\epsilon ,m^\epsilon)$ be a solution of \eqref{eq:smfg}. Then
\begin{equation}
\label{ihm}
\int_0^T \int_{\Tt^d} c H(x, D_xu^\epsilon) m^\epsilon
+G(\eta_\epsilon*m^\epsilon) dx dt 
\leq CT+C \left\|u^\epsilon(\cdot, T)\right\|_{L^\infty(\Tt^d)}, 
\end{equation}
where $G'=g$.
\end{Proposition}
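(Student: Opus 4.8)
\emph{Plan.} The estimate \eqref{ihm} will follow from the sharp form of the Lax--Hopf inequality of Proposition \ref{plb}, or equivalently from an energy identity obtained by testing the two equations in \eqref{eq:smfg} against each other. Concretely, I would take $b=-D_pH(x,Du^\ep)$ in \eqref{diflaw2}: then \eqref{diflaw2} becomes precisely the Fokker--Planck equation in \eqref{eq:smfg}, so with $t=0$ and the datum $\zeta_0=m_0$ one has $\zeta\equiv m^\ep$ on $[0,T]$ and the inequality \eqref{lhe2} is attained. Since $L(x,-D_pH(x,p))=\hat L(x,p)$, the identity reads
\[
\int_0^T\!\!\int_{\Tt^d}\bigl(\hat L(x,Du^\ep)\,m^\ep+g_\ep(m^\ep)\,m^\ep\bigr)\,dx\,dt
=\int_{\Tt^d}u^\ep(x,0)m_0(x)\,dx-\int_{\Tt^d}u^\ep(x,T)m^\ep(x,T)\,dx.
\]
The same identity can be obtained directly: multiply the Hamilton--Jacobi equation by $m^\ep$ and the Fokker--Planck equation by $u^\ep$, subtract, and integrate over $\Tt^d\times[0,T]$; the Laplacian terms cancel by Green's identity, the transport term produces $-\int_0^T\!\int D_pH(x,Du^\ep)\cdot Du^\ep\,m^\ep$, the time derivatives combine into $\partial_t(u^\ep m^\ep)$, and $D_pH(x,Du^\ep)\cdot Du^\ep-H(x,Du^\ep)=\hat L(x,Du^\ep)$ by \eqref{ele}. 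All integrations by parts are legitimate because $(u^\ep,m^\ep)$ is a classical solution.

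\emph{Estimating the three terms.} For the Lagrangian term I would use Assumption A\ref{aele}, which gives $\hat L(x,Du^\ep)\ge cH(x,Du^\ep)-C$; combined with $\int_{\Tt^d}m^\ep(\cdot,t)\,dx=\int_{\Tt^d}m_0=1$ for every $t$ (obtained by integrating the Fokker--Planck equation over $\Tt^d$), this yields $\int_0^T\!\int\hat L(x,Du^\ep)m^\ep\ge c\int_0^T\!\int H(x,Du^\ep)m^\ep-CT$. For the nonlinear term the crucial point is the structure \eqref{gep}: since $\eta_\ep$ is symmetric, transferring one mollifier to the other factor gives
\[
\int_{\Tt^d}g_\ep(m^\ep)\,m^\ep\,dx=\int_{\Tt^d}g(\eta_\ep*m^\ep)\,(\eta_\ep*m^\ep)\,dx,
\]
and since $G$ is convex with $G'=g$ one has the pointwise bound $g(w)w\ge G(w)-G(0)$ for all $w\ge0$; with $w=\eta_\ep*m^\ep\ge0$ this gives $\int_0^T\!\int g_\ep(m^\ep)m^\ep\ge\int_0^T\!\int G(\eta_\ep*m^\ep)-TG(0)$. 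Finally, using $\int_{\Tt^d}m^\ep(\cdot,t)\,dx=1$ once more, the right-hand side of the identity is at most $\|u^\ep(\cdot,0)\|_{L^\infty(\Tt^d)}+\|u^\ep(\cdot,T)\|_{L^\infty(\Tt^d)}$, where the second value is the prescribed terminal datum $u_T$. Collecting the three estimates and absorbing $TG(0)$, $\|u_T\|_{L^\infty}$ and the remaining constants into $CT+C$ produces \eqref{ihm}; the bound is uniform in $\ep$ because the identity is exact and uses only $g\ge0$ increasing.

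\emph{Main difficulty.} There is no genuine obstacle here --- this is a robust duality/energy estimate. The only points that require care are: (i) checking that the sharp case of Proposition \ref{plb} applies with $\zeta=m^\ep$, equivalently that the integrations by parts in space and time are valid, which is immediate for classical solutions; and (ii) the mollifier bookkeeping that rewrites $\int g_\ep(m^\ep)m^\ep$ as $\int g(\eta_\ep*m^\ep)(\eta_\ep*m^\ep)$, so that the convexity of $G$ can be invoked and the resulting bound is manifestly $\ep$-independent. Everything else reduces to $H\ge 1$, conservation of mass, convexity of $G$, and Assumption A\ref{aele}.
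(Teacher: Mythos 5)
Your duality identity is correct, as are the treatments of the Lagrangian term (via A\ref{aele} and conservation of mass) and of the nonlinear term (via the symmetry of $\eta_\ep$ and convexity of $G$). The gap is in the final step: you bound the right-hand side of the identity, namely
\[
\int_{\Tt^d}u^\ep(x,0)m_0(x)\,dx-\int_{\Tt^d}u^\ep(x,T)m^\ep(x,T)\,dx,
\]
by $\|u^\ep(\cdot,0)\|_{L^\infty}+\|u^\ep(\cdot,T)\|_{L^\infty}$ and then ``absorb'' this into $CT+C$. But only $u^\ep(\cdot,T)=u_T$ is prescribed data; $u^\ep(\cdot,0)$ is part of the unknown, and its $L^\infty$ norm is \emph{not} a priori bounded by the data --- it depends on $g_\ep(m^\ep)$ through the coupling, which is precisely the quantity the proposition is meant to control. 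So as written the argument is circular at the last step.

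The missing ingredient is the \emph{inequality} form of Proposition~\ref{plb} (not just its sharp case, which reproduces your identity). Take $b\equiv 0$ in \eqref{diflaw2} with $\zeta(\cdot,0)=m_0$, so $\zeta$ is the heat evolution of $m_0$ and satisfies $\|\zeta(\cdot,t)\|_{L^\infty}\le\|m_0\|_{L^\infty}$ by the maximum principle. Then \eqref{lhe2} gives
\[
\int_{\Tt^d}u^\ep(x,0)m_0\,dx\le T\max_{\Tt^d}L(\cdot,0)+\int_0^T\!\!\int_{\Tt^d}g_\ep(m^\ep)\zeta\,dx\,dt+\|u_T\|_{L^\infty}.
\]
Substituting this into your duality identity, the new term $\int_0^T\!\int g_\ep(m^\ep)\zeta$ must be absorbed by $\int_0^T\!\int g_\ep(m^\ep)m^\ep$ on the left. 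This is where the mollifier bookkeeping and the monotonicity of $g$ are really needed: $\int g_\ep(m^\ep)\zeta=\int g(\eta_\ep*m^\ep)(\eta_\ep*\zeta)\le\|m_0\|_{L^\infty}\int g(\eta_\ep*m^\ep)$, and since $g$ is increasing one has, with $K=\|m_0\|_{L^\infty}$, the pointwise bound $Kg(w)\le \tfrac12 g(w)w+Kg(2K)$ (consider $w\ge 2K$ and $w<2K$ separately). Applying this with $w=\eta_\ep*m^\ep$ lets half of $\int g_\ep(m^\ep)m^\ep$ eat the extra term, after which your estimates via A\ref{aele} and the convexity of $G$ yield \eqref{ihm}. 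Without this Lax--Hopf comparison and absorption step, the bound does not close.
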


\begin{Proposition}[Second-order estimate]\label{elsoe1}
Assume A\ref{ah}-\ref{bcc}
hold. 
Let $(u^\epsilon,m^\epsilon)$ be a solution of \eqref{eq:smfg}.
\begin{align*}
\int_0^T\int_{\Tt^d} & g'(\eta_\epsilon*m^\epsilon)|D_x (\eta_\epsilon*m^\epsilon)|^2
+
\tr(D_{pp}^2H(D^2_{xx}u^\epsilon)^2) m^\epsilon
\leq C.
\end{align*}
\end{Proposition}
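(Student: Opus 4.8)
The plan is to derive a closed identity for $\Delta u^\epsilon$ by differentiating the Hamilton--Jacobi equation in \eqref{eq:smfg} twice in $x$, and then to test that identity against $m^\epsilon$, exploiting that the Fokker--Planck equation is, up to sign, the formal adjoint of the linearized Hamilton--Jacobi operator. Since $(u^\epsilon,m^\epsilon)$ is a \emph{smooth} solution of the regularized system, every manipulation below is legitimate. Applying $\sum_k\partial^2_{x_kx_k}$ to $-u^\epsilon_t+H(x,Du^\epsilon)=\Delta u^\epsilon+g_\epsilon(m^\epsilon)$ and using
\[
\Delta_x\big(H(x,Du^\epsilon)\big)=\tr(D^2_{xx}H)+2\tr(D^2_{xp}H\,D^2u^\epsilon)+\tr\big(D^2_{pp}H(D^2u^\epsilon)^2\big)+D_pH\cdot D(\Delta u^\epsilon),
\]
one obtains an equation of the form
\[
-(\Delta u^\epsilon)_t+D_pH\cdot D(\Delta u^\epsilon)-\Delta^2 u^\epsilon=-\tr(D^2_{xx}H)-2\tr(D^2_{xp}H\,D^2u^\epsilon)-\tr\big(D^2_{pp}H(D^2u^\epsilon)^2\big)+\Delta g_\epsilon(m^\epsilon).
\]

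Next I would multiply this equation by $m^\epsilon$ and integrate over $\Tt^d\times[0,T]$. Integrating by parts in $t$ and $x$ and using $m^\epsilon_t-\div(D_pH\,m^\epsilon)=\Delta m^\epsilon$, the entire left-hand side collapses to the difference of time-boundary terms $\int_{\Tt^d}\Delta u^\epsilon(\cdot,0)\,m_0-\int_{\Tt^d}\Delta u^\epsilon(\cdot,T)\,m^\epsilon(\cdot,T)$. The nonlinear term is treated via the special structure $g_\epsilon=\eta_\epsilon*g(\eta_\epsilon*\,\cdot\,)$: since $\eta_\epsilon$ is symmetric, $\int_0^T\!\!\int_{\Tt^d}\Delta g_\epsilon(m^\epsilon)\,m^\epsilon=\int_0^T\!\!\int_{\Tt^d}\Delta\big(g(\eta_\epsilon*m^\epsilon)\big)(\eta_\epsilon*m^\epsilon)=-\int_0^T\!\!\int_{\Tt^d}g'(\eta_\epsilon*m^\epsilon)\,|D(\eta_\epsilon*m^\epsilon)|^2$. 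Rearranging yields
\begin{multline*}
\int_0^T\!\!\int_{\Tt^d}g'(\eta_\epsilon*m^\epsilon)|D(\eta_\epsilon*m^\epsilon)|^2+\int_0^T\!\!\int_{\Tt^d}\tr\big(D^2_{pp}H(D^2u^\epsilon)^2\big)m^\epsilon\\
=\int_{\Tt^d}\Delta u^\epsilon(\cdot,T)\,m^\epsilon(\cdot,T)-\int_{\Tt^d}\Delta u^\epsilon(\cdot,0)\,m_0-\int_0^T\!\!\int_{\Tt^d}\Big(\tr(D^2_{xx}H)+2\tr(D^2_{xp}H\,D^2u^\epsilon)\Big)m^\epsilon,
\end{multline*}
where both terms on the left are nonnegative, by convexity of $p\mapsto H(x,p)$ and monotonicity of $g$.

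It then remains to bound the right-hand side by a constant independent of $\epsilon$. The term $\int_{\Tt^d}\Delta u^\epsilon(\cdot,T)\,m^\epsilon(\cdot,T)$ is controlled since $u^\epsilon(\cdot,T)$ is the prescribed smooth terminal datum and $\int_{\Tt^d}m^\epsilon(\cdot,T)=\int_{\Tt^d}m_0=1$. For the last integral I would invoke Assumption A\ref{strong}: from $|\tr D^2_{xx}H|\le CH+C$ we get $\int_0^T\!\!\int_{\Tt^d}\tr(D^2_{xx}H)m^\epsilon\le C\int_0^T\!\!\int_{\Tt^d}Hm^\epsilon+CT$, bounded by the first-order estimate (Proposition \ref{pehm}); the second inequality of A\ref{strong}, applied with $M=-D^2u^\epsilon$ and $\delta$ small, gives $-2\tr(D^2_{xp}H\,D^2u^\epsilon)\le 2\delta\,\tr(D^2_{pp}H(D^2u^\epsilon)^2)+C_\delta H$, so that a fraction of the good term $\int_0^T\!\!\int_{\Tt^d}\tr(D^2_{pp}H(D^2u^\epsilon)^2)m^\epsilon$ is absorbed into the left-hand side and the remainder is again estimated through Proposition \ref{pehm}.

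The step I expect to be the main obstacle is the initial-time boundary term $-\int_{\Tt^d}\Delta u^\epsilon(\cdot,0)\,m_0=-\int_{\Tt^d}u^\epsilon(\cdot,0)\,\Delta m_0$, delicate because the Hamilton--Jacobi equation runs backward in time and $u^\epsilon(\cdot,0)$ is not prescribed. I would bound it by $\|\Delta m_0\|_{L^\infty}\,\|u^\epsilon(\cdot,0)\|_{L^1(\Tt^d)}$ and estimate $\|u^\epsilon(\cdot,0)\|_{L^1}$ by combining the optimal-control lower bound $u^\epsilon\ge -C$ with an upper bound from the stochastic Lax--Hopf inequality (Proposition \ref{plb} with $b\equiv 0$): letting the initial datum of the adjoint field approximate a Dirac mass gives $u^\epsilon(x,0)\le C+\int_0^T\!\!\int_{\Tt^d}g_\epsilon(m^\epsilon)(y,s)\,p(s;x,y)\,dy\,ds$ with $p$ the periodic heat kernel, and integrating in $x$ and using $\int_{\Tt^d}p(s;x,y)\,dx=1$ yields $\|u^\epsilon(\cdot,0)\|_{L^1}\le C+\int_0^T\!\!\int_{\Tt^d}g_\epsilon(m^\epsilon)$. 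The last integral is bounded by Proposition \ref{pehm} together with the power structure A\ref{ag2}, which lets one pass from a bound on $\int_0^T\!\!\int_{\Tt^d}G(\eta_\epsilon*m^\epsilon)$ to one on $\int_0^T\!\!\int_{\Tt^d}g(\eta_\epsilon*m^\epsilon)$. Thus, unlike the interior terms --- which are essentially automatic once the adjoint identity is in place --- this boundary term forces one to feed back the representation formula and the first-order estimate, and it is where A\ref{ag2} is used.
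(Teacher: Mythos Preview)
The paper omits the proof of this Proposition, deferring to \cite{GPM2}; your argument is exactly the standard one from that reference: apply the Laplacian to the Hamilton--Jacobi equation, test against $m^\epsilon$, use the Fokker--Planck equation as adjoint to collapse the left side to time-boundary terms, exploit the symmetry of $\eta_\epsilon$ to extract $\int g'(\eta_\epsilon*m^\epsilon)|D(\eta_\epsilon*m^\epsilon)|^2$, and control the cross terms via A\ref{strong} and Proposition \ref{pehm}. Your handling of the $t=0$ boundary term---integrating by parts onto $\Delta m_0$ and bounding $\|u^\epsilon(\cdot,0)\|_{L^1}$ through the Lax--Hopf representation combined with the first-order estimate---is correct and is precisely the device used in \cite{GPM2}.
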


\begin{Corollary}
\label{mhjr}
Assume  A\ref{ah}-\ref{bcc} hold. 
Let $(u^\epsilon,m^\epsilon)$ be a solution of \eqref{eq:smfg}. Then
\[
\int_0^T \|\eta_\epsilon*m^\epsilon\|_{L^{\frac{2^*}2 (\alpha+1)}(\Tt^d)}^{\alpha +1}dt\leq C.
\]
\end{Corollary}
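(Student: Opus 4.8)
The plan is to combine the first- and second-order a priori estimates (Propositions \ref{pehm} and \ref{elsoe1}) with the Sobolev inequality, after an elementary change of unknown. Write $\rho_\epsilon := \eta_\epsilon * m^\epsilon$. Since $m^\epsilon\geq 0$ and $\eta_\epsilon\geq 0$ we have $\rho_\epsilon\geq 0$ --- in fact $\rho_\epsilon>0$ and smooth, by the maximum principle for the Fokker--Planck equation together with A\ref{bcc} --- so every power of $\rho_\epsilon$ appearing below is well defined. Because $g(z)=z^\alpha$, its primitive is $G(z)=z^{\alpha+1}/(\alpha+1)$, and $u^\epsilon(\cdot,T)=u_T\in C^\infty(\Tt^d)$ is fixed data; hence the first-order estimate \eqref{ihm} already supplies a uniform-in-$\epsilon$ bound $\int_0^T\!\int_{\Tt^d}\rho_\epsilon^{\alpha+1}\,dx\,dt\leq C$. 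From the second-order estimate, using $g'(z)=\alpha z^{\alpha-1}$ and the chain-rule identity $\rho_\epsilon^{\alpha-1}|D\rho_\epsilon|^2=\tfrac{4}{(\alpha+1)^2}\big|D(\rho_\epsilon^{(\alpha+1)/2})\big|^2$, I would extract a uniform bound $\int_0^T\!\int_{\Tt^d}\big|D(\rho_\epsilon^{(\alpha+1)/2})\big|^2\,dx\,dt\leq C$.

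Next I set $w_\epsilon:=\rho_\epsilon^{(\alpha+1)/2}$; the two bounds just obtained say precisely that $w_\epsilon$ is bounded in $L^2(0,T;H^1(\Tt^d))$, uniformly in $\epsilon$. Applying the Sobolev embedding $H^1(\Tt^d)\hookrightarrow L^{2^*}(\Tt^d)$ for a.e.\ $t$ and integrating in time then gives $\int_0^T\|w_\epsilon(\cdot,t)\|_{L^{2^*}(\Tt^d)}^2\,dt\leq C$. It only remains to unravel the definition of $w_\epsilon$: since $\|w_\epsilon(\cdot,t)\|_{L^{2^*}}^2=\|\rho_\epsilon(\cdot,t)\|_{L^{\frac{2^*}{2}(\alpha+1)}}^{\alpha+1}$, this is exactly the asserted inequality.

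This is a short argument once the two estimates are in hand; the only point worth stressing is that \emph{both} are genuinely needed --- the second-order estimate controls only $D(\rho_\epsilon^{(\alpha+1)/2})$, while the $L^{\alpha+1}$ bound from the first-order estimate furnishes the remaining piece of the full $H^1$ norm that the Sobolev embedding requires. The only mild technical care concerns the case $\alpha<1$, where $z\mapsto z^{\alpha-1}$ is singular at the origin; this is harmless because $\rho_\epsilon$ is smooth and strictly positive (alternatively, one can run the computation with $\rho_\epsilon+\delta$ in place of $\rho_\epsilon$ and let $\delta\to 0$).
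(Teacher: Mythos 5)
Your argument is correct, and it is essentially the standard way to obtain this corollary: the first-order estimate (Proposition~\ref{pehm}) with $G(z)=z^{\alpha+1}/(\alpha+1)$ and fixed smooth terminal data gives a uniform bound on $\int_0^T\!\int_{\Tt^d}(\eta_\epsilon*m^\epsilon)^{\alpha+1}$, the second-order estimate (Proposition~\ref{elsoe1}) with $g'(z)=\alpha z^{\alpha-1}$ and the chain rule gives a uniform bound on $\int_0^T\!\int_{\Tt^d}\bigl|D\bigl((\eta_\epsilon*m^\epsilon)^{(\alpha+1)/2}\bigr)\bigr|^2$, and the two together bound $w_\epsilon=(\eta_\epsilon*m^\epsilon)^{(\alpha+1)/2}$ in $L^2(0,T;H^1(\Tt^d))$, from which the Sobolev embedding $H^1\hookrightarrow L^{2^*}$ yields exactly the stated estimate. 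The paper itself omits the proof, deferring to \cite{GPM2}, but this is precisely the chain of reasoning the corollary is designed to encode, and your observation that both pieces of the $H^1$ norm are genuinely required (and your care about $\alpha<1$ via positivity of $\eta_\epsilon*m^\epsilon$) is appropriate.
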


\section{Regularity for the Fokker-Planck equation}
\label{igain}

Next, building upon the second-order estimate of Proposition \ref{elsoe1}, we obtain improved integrability for $m^\epsilon$.
In Section \ref{elep}, the integrability of $m^\epsilon$ is controlled
in terms of $L^p$ norms of $D_pH(x,Du^\ep(x))$. 

In the superquadratic case, further arguments yield uniform estimates for $Du^\ep$ in $L^\infty(\Tt^d\times[0,T])$ rather than in $L^r(0,T;L^p(\Tt^d))$, which was the space used in the
subquadratic setting.

Along this Section,
the function $H$ and its derivatives will be evaluated at $(x,Du^\ep(x))$. However, to ease the notation, we will omit this argument.

\subsection{Regularity by the second-order estimate}
\label{rele}

We begin by addressing the regularity of the Fokker-Planck equation
by applying the second-order estimate from the previous section. 

\begin{teo}
\label{emsupere}
Assume A\ref{ah}-\ref{bcc} and A\ref{aspdpph} hold.
Let $(u^\epsilon,m^\epsilon)$ be a solution of \eqref{eq:smfg}. Then
for $d\geq 2$, $\|m^\epsilon(\cdot,t)\|_{L^\infty(0,T;L^r(\Tt^d))}$ is bounded for any 
$1\le r<\frac{d (1+\mu) }{d (1+\mu) - 2}$, uniformly in $\epsilon$.
\end{teo}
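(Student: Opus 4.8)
The plan is to obtain higher integrability for $m^\epsilon$ by combining the second-order estimate of Proposition \ref{elsoe1} with the structural bound A\ref{aspdpph}, and then to run a Moser-type iteration (or equivalently, use parabolic energy estimates together with the Gagliardo–Nirenberg–Sobolev inequality) on the Fokker–Planck equation. First I would recall that $m^\epsilon$ solves $m_t^\epsilon-\div(D_pH\,m^\epsilon)=\Delta m^\epsilon$ with $D_pH$ evaluated at $(x,Du^\epsilon)$, so testing the equation against $(m^\epsilon)^{r-1}$ (or against a truncated power to make the computation rigorous) and integrating by parts yields, for $r>1$,
\begin{equation*}
\frac{d}{dt}\int_{\Tt^d}(m^\epsilon)^r + c(r)\int_{\Tt^d}\big|D(m^\epsilon)^{r/2}\big|^2 \leq C(r)\int_{\Tt^d}|D_pH|^2 (m^\epsilon)^r .
\end{equation*}
The key point is that the dangerous right-hand side involves $|D_pH|^2$, which by A\ref{adph} grows like $|p|^\mu H$; but rather than estimating this by a pointwise $L^\infty$ bound on $Du^\epsilon$ (which we do not yet have), I would absorb it using the space-time integrability of $H m^\epsilon$ coming from the first-order estimate (Proposition \ref{pehm}) and the control on $\tr(D_{pp}^2H (D^2u^\epsilon)^2)m^\epsilon$ from Proposition \ref{elsoe1}, via \eqref{magic}, which bounds $|\div(D_pH)|^2$ in terms of exactly those quantities.

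Next I would interpolate. The gradient term $\int|D(m^\epsilon)^{r/2}|^2$ controls $\|(m^\epsilon)^{r/2}\|_{L^{2^*}}^2 = \|m^\epsilon\|_{L^{r\cdot 2^*/2}}^{r}$ by Sobolev, while the mass constraint $\int m^\epsilon = 1$ gives an $L^1$ anchor; interpolating the right-hand side $\int|D_pH|^2(m^\epsilon)^r$ between these, and using that $\int_0^T\int |D_pH|^2 m^\epsilon$ and related quantities are bounded by Propositions \ref{pehm}–\ref{elsoe1} together with A\ref{adph} and A\ref{aspdpph}, one arrives at a closed differential inequality for $t\mapsto \|m^\epsilon(\cdot,t)\|_{L^r}^r$ of the form $\phi' \leq a(t)\phi^{1+\sigma}+b(t)\phi$ with $\int_0^T a, \int_0^T b$ bounded, provided $r$ is below the critical exponent. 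A Gronwall / bootstrap argument, starting from the $L^{(\alpha+1)2^*/2}$ control of Corollary \ref{mhjr} and using that $m_0$ is smooth, then propagates the $L^\infty(0,T;L^r)$ bound. The threshold $r<\frac{d(1+\mu)}{d(1+\mu)-2}$ is precisely the exponent at which the scaling of $|p|^{2+\mu}$-type growth balances against the Sobolev gain $2^*/2 = \frac{d}{d-2}$ in dimension $d$; I would verify this by tracking exponents carefully.

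The main obstacle I expect is controlling the term $\int|D_pH|^2(m^\epsilon)^r$ without a priori $L^\infty$ bounds on $Du^\epsilon$ — this is the crux, and it is exactly where the superquadratic growth makes the subquadratic arguments fail. The resolution is to never estimate $|D_pH|$ pointwise but always in an integrated sense, trading the $|p|^\mu$ loss against the room left below the critical Sobolev exponent; A\ref{superq} and A\ref{adph} are used to convert powers of $|p|$ into powers of $H$, and then Propositions \ref{pehm} and \ref{elsoe1} supply the bounds on $\int\!\int H m^\epsilon$ and $\int\!\int \tr(D_{pp}^2 H(D^2u^\epsilon)^2)m^\epsilon$. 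A secondary technical point is justifying the integrations by parts and the use of powers of $m^\epsilon$ as test functions: since $m^\epsilon$ is smooth and positive for the regularized system (existence of $C^\infty$ solutions of \eqref{eq:smfg}), and by A\ref{bcc} $m_0\geq\kappa_0>0$, a standard maximum-principle argument keeps $m^\epsilon$ bounded below, so the manipulations are legitimate and all constants are uniform in $\epsilon$ because the structural constants in A\ref{ah}–A\ref{aspdpph} and the bounds in Propositions \ref{pehm}–\ref{elsoe1} and Corollary \ref{mhjr} are.
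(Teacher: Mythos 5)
Your high-level plan is the right one (energy estimates on the Fokker--Planck equation, the second-order estimate, A8 via \eqref{magic}, Sobolev interpolation, iteration), but the central displayed inequality is the wrong starting point and is inconsistent with the way you then propose to close it. After testing with $(m^\epsilon)^{r-1}$ and absorbing the drift term by Cauchy--Schwarz you are left with $C(r)\int |D_pH|^2 (m^\epsilon)^r$ on the right. This quantity cannot be controlled by Propositions \ref{pehm} and \ref{elsoe1}: by A\ref{adph} and A\ref{superq} it is comparable to $\int H^{\frac{2+2\mu}{2+\mu}}(m^\epsilon)^r$, i.e.\ a power of $H$ strictly larger than one weighted by a power of $m$ larger than one, whereas the a priori estimates only give $\int\!\!\int H m$ and $\int\!\!\int \tr(D^2_{pp}H (D^2u)^2)\,m$. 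Moreover \eqref{magic}, which you invoke to rescue this term, is a bound on $\div(D_pH)$ (it involves $D^2u$), not on $|D_pH|^2$; once you have performed the Cauchy--Schwarz step you have destroyed exactly the structure that \eqref{magic} exploits. The correct route --- the one the paper takes --- is to integrate by parts once more and keep the drift contribution in the form $\beta\int \div(D_pH)\,m^{\beta+1}$, then apply \eqref{magic} and a three-factor Young inequality that distributes the powers of $m$ as $m = m^{\frac{\mu}{2(2+\mu)}}\cdot m^{1/2}\cdot m^{\beta+\frac{1}{2+\mu}}\cdot(\cdots)$, producing $C_\delta\int Hm + C_\delta\int \tr(D^2_{pp}HD^2uD^2u)\,m + \delta\int m^{(2+\mu)\beta+1}$; the first two terms are exactly the controlled quantities and the third is absorbed into the dissipation. (The energy inequality you wrote, with $\|\,|D_pH|^2\|_{L^\infty}$ factored out, is what the paper uses later, in Lemma \ref{lpi1}, for a different purpose.)

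Two further points. First, the closure mechanism is not a Gronwall argument for $\phi'\le a(t)\phi^{1+\sigma}+b(t)\phi$: with $\sigma>0$ such an inequality can blow up in finite time even when $\int_0^T a$ is bounded. What actually happens is that the superlinear term $\delta\int m^{(2+\mu)\beta_{n+1}+1}$ is interpolated between $L^{1+\beta_n}$ (bounded by the induction hypothesis) and $L^{q_n}$ with $q_n=\frac{2^*}{2}(\beta_{n+1}+1)$, and --- thanks to the precise choice $\beta_{n+1}=\frac{2}{d(1+\mu)}(\beta_n+1)$, which forces $\lambda_n((2+\mu)\beta_{n+1}+1)=1+\beta_{n+1}$ --- it is then absorbed, for $\delta$ small, into the gradient term on the left via Sobolev. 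This is also where the threshold $\frac{d(1+\mu)}{d(1+\mu)-2}$ arises, as the limit of the geometric iteration, not from Corollary \ref{mhjr}, which plays no role here (the iteration is anchored at $\beta_0=0$ using only $\int m^\epsilon=1$). Second, the case $d=2$ needs a separate treatment since $2^*$ is not defined there; the paper runs the same scheme with $q_n=\frac{p(\beta_{n+1}+1)}{1+\mu-\mu p}$ for $1<p<1+\frac1\mu$.
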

\begin{proof}
We will omit the $\epsilon$ to simplify the notation. 
We start by defining an increasing sequence $\be_n$ such 
that $\|m(\cdot, t)\|_{L^{1+\be_n}(\Tt^d)}$ is bounded.
We 
set $\be_0=0$ so that $\|m(\cdot, t)\|_{L^{1+\be_0}(\Tt^d)}=1\leq C$.
 
At this point, it is critical
to control 
$
\int_{\Tt^d} \div(D_pH)m^{\be+1}dx 
$.
This will be done using Assumption A\ref{aspdpph}. In fact, using
\eqref{magic} we have: 
\begin{align}\notag
&\int_{\Tt^d} \div(D_pH)m^{\be+1}dx\\\notag
&\leq 
\int_{\Tt^d} C H^\frac{\mu}{2(2+\mu)} m^{\frac{\mu}{2(2+\mu)}}
\tr \left(D^2_{pp}HD^2uD^2u \right)^{1/2} m^{1/2} m^{\beta+\frac{1}{(2+\mu)}}\\\notag
&\quad + \int_{\Tt^d}CH^\frac{1+\mu}{2+\mu} m^\frac{1+\mu}{2+\mu} m^{\beta+\frac{1}{(2+\mu)}}\\\label{key-supere}
&\leq C_\de\int_{\Tt^d} H m +C_\de\int_{\Tt^d} \tr\left(D^2_{pp}HD^2uD^2u \right)m+\de\int_{\Tt^d} m^{(2+\mu)\beta+1}.
\end{align}
We note that the time integral of the first two terms on the right-hand side of the previous 
inequalities is bounded by Propositions \ref{pehm} and \ref{elsoe1}. Because of the Sobolev's Theorem we proceed by examining the cases $d>2$ and $d=2$ separately.

Consider the case $d>2$.
Let $\be_{n+1}=\frac{2}{d(1+\mu)}(\be_n+1).$
Then $\be_n$ is the $n^{th}$ partial sum of a geometric series with
term $\dfrac{2^n}{d^n(1+\mu)^n}$. Therefore
$\lim\limits_{n\to \infty}\be_n=\dfrac{2}{d(1+\mu)-2}$.
We define $q_n=\frac{2^*}2(\be_{n+1}+1),$ where $2^*$ is the critical Sobolev exponent given by $$2^*=\frac{2d}{d-2}.$$
Hence we have $$\|m\|_{L^{(2+\mu)\be_{n+1}+1}(\Tt^d)}\leq \|m\|_{L^{1+\be_n}(\Tt^d)}^{1-\lam_n}\|m\|_{L^{q_n}(\Tt^d)}^{\lam_n},$$
where $\frac{\lambda_n}{q_n}+\frac{1-\lambda_n}{1+\be_n}=\frac{1}{(2+\mu)\be_{n+1}+1},$
and thus:
\begin{equation}\label{eq:exponente}
  \lambda_n=\frac{q_n}{q_n-\be_n-1}\frac{(2+\mu)\be_{n+1}-\be_n}{1+(2+\mu)\be_{n+1}}.
\end{equation}
Since $\| m \|_{L^{1+\be_n}(\Tt^d)}\leq C,$ we get
\begin{equation}
 \label{m-interpole}
\int_{\Tt^d} m^{(2+\mu)\be_{n+1} +1} dx
=\|m\|_{L^{(2+\mu)\be_{n+1}+1}(\Tt^d)}^{(2+\mu)\be_{n+1}+1}
\leq C\|m\|_{L^{q_n}(\Tt^d)}^{\lam_n ((2+\mu)\be_{n+1}+1)}.
\end{equation}

Setting $\be=\be_{n+1}$, from \eqref{key-supere} and \eqref{m-interpole}, 
we get for any $\tau\in[0,T]$ 
\begin{align}
&\label{igm-eq6ae} 
\int_{\Tt^d} m^{\be_{n+1}+1}(x,\tau)dx +\frac{4\be_{n+1}}{\be_{n+1}+1}
\int_0^\tau\int_{\Tt^d}|D_xm^{\frac{\be_{n+1}+1}{2}}(x,t)|^2dx\,dt
\notag \\
&=  \int_{\Tt^d} m^{\be_{n+1}+1}(x,0)dx + \be
\int_0^\tau \int_{\Tt^d} \div(D_pH)m^{\be_{n+1}+1}dx dt\notag \\
&\leq  \int m^{\be_{n+1}+1}(x,0)dx + C_\de \int_0^\tau\int_{\Tt^d} H m \\\notag
&+ C_\de \int_0^\tau\int_{\Tt^d}\left(\tr D^2_{pp}HD^2uD^2u \right)m
+\delta \int_0^\tau\|m\|_{L^{q_n}(\Tt^d)}^{\lam_n ((2+\mu)\be_{n+1}+1)} dt.
\end{align}
From the definition of $\be_{n+1}$ it follows that $\lam_n ((2+\mu)\be_{n+1}+1)=1+\beta_{n+1}.$
Hence,
\begin{align}\label{igm-eq3ae}
\|m\|_{L^{q_n}(\Tt^d)}^{\lam_n ((2+\mu)\be_{n+1}+1)}&=
\|m^{\frac{\be_{n+1}+1}{2}}\|_{L^{2^*}(\Tt^d)}^2\\\notag
&\le C+C\left(\int_{\Tt^d} |D_xm^{\frac{\be_{n+1}+1}{2}}|^2 +\int_{\Tt^d} m^{\be_{n+1}+1}\right).
 \end{align}
Using elementary inequalities and $\int mdx=1$ we have
for any $\zeta>0$ that: 
$\int_{\Tt^d} m^{\be_{n+1}+1}
\le  C_\zeta+\zeta\|m\|_{L^{q_n}(\Tt^d)}^{\beta_{n+1}+1}$.
Thus it follows
\[\|m\|_{L^{q_n}(\Tt^d)}^{\beta_{n+1}+1}\le C_\zeta+C\int_{\Tt^d}|D_xm^{\frac{\be_{n+1}+1}{2}}|^2
+\zeta\|m\|_{L^{q_n}(\Tt^d)}^{\beta_{n+1}+1}.
 \]
From \eqref{igm-eq6ae} and \eqref{igm-eq3ae}, taking $\delta$ and $\zeta$ small enough, it follows that 
for some $\delta_1>0$
\begin{align*}
&\int_{\Tt^d} m^{\be_{n+1}+1}(x,\tau)dx +\delta_1 \int_0^\tau \|m\|_{L^{q_n}(\Tt^d)}^{\be_{n+1}+1}dt\\
\leq  &C+C\int_{\Tt^d} m^{\be_{n+1}+1}(x,0)dx \\&+ C\int_0^\tau\int_{\Tt^d} H m 
+C\int_0^\tau\int_{\Tt^d} \left(\tr D^2_{pp}HD^2uD^2u \right)m.
\end{align*}

Because the last two terms on the right-hand side are bounded by Propositions \ref{pehm} and \ref{elsoe1}, we have the result.

Consider now the case $d=2$. Let $1<p<1+\dfrac 1\mu$. 
As before, we define inductively $\be_n$, starting with $\be_0=0$. 
Letting $\be_{n+1}:=\dfrac{p-1}p(\be_n+1)$,  we have that $\be_n$ is the $n^{th}$
partial sum of the geometric series with term $\dfrac{(p-1)^n}{p^n}$
and so $\lim\limits_{n\to \infty}\be_n=p-1$. Let $q_n=\frac{p(\be_{n+1}+1)}{1+\mu-\mu p}.$
For $\lam_n$ as in \eqref{eq:exponente} we have $$
\|m \|_{L^{(2+\mu)\be_{n+1}+1}(\Tt^d)}\leq \|m\|_{L^{1+\be_n}(\Tt^d)}^{1-\lambda_n}\|m\|_{L^{q_n}(\Tt^d)}^{\lambda_n}.
$$ From the previous definitions it follows that $\lam_n ((2+\mu)\be_{n+1}+1)=1+\beta_{n+1}.$
Since $\| m \|_{1+\be_n}\leq C,$ we get
\begin{equation}\label{eq:1}
\int_{\Tt^d} m^{(2+\mu)\be_{n+1} +1} dx
=\|m\|_{L^{(2+\mu)\be_{n+1}+1}(\Tt^d)}^{(2+\mu)\be_{n+1}+1}\leq C\|m\|_{L^{q_n}(\Tt^d)}^{\lam_n ((2+\mu)\be_{n+1}+1)}
=C\|m\|_{L^{q_n}(\Tt^d)}^{1+\beta_{n+1}}.
\end{equation}

As in  \eqref{igm-eq6ae}, using
\eqref{key-supere}, and \eqref{eq:1}
we get  for any $\tau\in[0,T]$
\begin{align}
\label{igm-eq6x} 
&\int_{\Tt^d} m^{\be_{n+1}+1}(x,\tau)dx +\frac{4\be_{n+1}}{\be_{n+1}+1}
\int_0^\tau\int_{\Tt^d}|D_xm^{\frac{\be_{n+1}+1}{2}}(x,t)|^2dx\,dt
\notag \\
&\leq  \int_{\Tt^d} m^{\be_{n+1}+1}(x,0)dx + C_\de \int_0^\tau\int_{\Tt^d} H m\notag\\ 
&+C_\de \int_0^\tau\int_{\Tt^d} \left(\tr D^2_{pp}HD^2uD^2u \right)m
+\delta \int_0^\tau \|m\|_{L^{q_n}(\Tt^d)}^{1+\beta_{n+1}} dt.
\end{align}

By Sobolev Theorem we get
\begin{align}\label{eq:doubt}
\|m\|_{L^{q_n}(\Tt^d)}^{1+\beta_{n+1}}=\|m^{\frac{\be_{n+1}+1}{2}}\|_{L^{\frac{2q_n}{\be_{n+1}+1}}(\Tt^d)}^2
&\le C\int_{\Tt^d} |D_xm^{\frac{\be_{n+1}+1}{2}}|^2 dx+C_\zeta+\zeta \|m\|_{L^{q_n}(\Tt^d)}^{\be_{n+1}+1}.
\end{align}

From \eqref{igm-eq6x} and \eqref{eq:doubt}, taking $\delta$ and $\zeta$ small enough
we have for some $\delta_1>0$
\begin{align*}
&\int_{\Tt^d} m^{\be_{n+1}+1}(x,\tau)dx +\delta_1\int_0^\tau \|m\|_{L^{q_n}(\Tt^d)}^{\be_{n+1}+1}dt\\
&\leq  C+C\int_{\Tt^d} m^{\be_{n+1}+1}(x,0)dx + C \int_0^\tau\int_{\Tt^d} H m\\ &
+C\int_0^\tau\int_{\Tt^d} \left(\tr D^2_{pp}HD^2uD^2u \right)m.
\end{align*}
Notice that the last two terms in the right-hand side are bounded, because of Propositions \ref{pehm} and \ref{elsoe1}. Then, we have established the result. 
\end{proof}

\subsection{Regularity by $L^p$ estimates}
\label{elep}

Now we bound $m^\epsilon$ in $L^\infty([0,T],L^p(\Tt^d))$ with estimates depending polynomially on the $L^\infty$-norm of $D_pH$. 
Because explicit expressions will be needed, we prove them in detail. For ease of presentation, we omit the $\epsilon$ in the proofs of this Section.

We start by setting $1\;\leq \;\be_{0}\;<\;\frac{d(1+\mu)}{d(1+\mu)-2}$, and consider $\be_1\doteq \theta\be_0$, for some $\theta>1$ fixed.

\begin{Lemma}\label{lpi1}
Assume that $(u^\epsilon,m^\epsilon)$ is a solution of \eqref{eq:smfg}
and let $\be\ge\be_0$ for $\be_0>1$ fixed. Then 
\begin{align}\label{eq:eq02}
\frac{d}{dt}\int_{\Tt^d}\left(m^{\epsilon}\right)^\be(t,x)dx
&\leq C\left\|\left|D_{p}H\right|^{2}\right\|_{L^\infty(\Tt^d)}\int_{\Tt^d}\left(m^{\epsilon}\right)^{\be}(t,x)dx
-c\int_{\Tt^d}\left|D_{x}\left((m^{\epsilon})^{\frac{\be}{2}}\right)\right|^{2}dx.
\end{align}
\end{Lemma}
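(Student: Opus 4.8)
The strategy is to differentiate $\int_{\Tt^d}(m^\ep)^\be\,dx$ in time, substitute the Fokker--Planck equation from \eqref{eq:smfg}, integrate by parts, and then control the resulting drift term using Cauchy--Schwarz together with the coercivity coming from the Laplacian. Concretely, I would first compute
\[
\frac{d}{dt}\int_{\Tt^d}(m^\ep)^\be\,dx
=\be\int_{\Tt^d}(m^\ep)^{\be-1}m^\ep_t\,dx
=\be\int_{\Tt^d}(m^\ep)^{\be-1}\bigl(\Delta m^\ep+\div(D_pH\,m^\ep)\bigr)\,dx,
\]
using that $m^\ep_t-\div(D_pH\,m^\ep)=\Delta m^\ep$ and that $m^\ep>0$ so the powers are smooth. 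Integrating both terms by parts on the torus (no boundary terms), the diffusion term gives
\[
\be\int_{\Tt^d}(m^\ep)^{\be-1}\Delta m^\ep\,dx
=-\be(\be-1)\int_{\Tt^d}(m^\ep)^{\be-2}|D_xm^\ep|^2\,dx
=-\frac{4(\be-1)}{\be}\int_{\Tt^d}\bigl|D_x\bigl((m^\ep)^{\be/2}\bigr)\bigr|^2\,dx,
\]
which is the negative dissipation term on the right-hand side of \eqref{eq:eq02} (with $c$ any constant $\le 4(\be-1)/\be$, and here $\be\ge\be_0>1$ guarantees $c>0$, uniformly for $\be$ bounded, or with the explicit $\be$-dependent constant if one prefers).

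For the drift term, integration by parts moves the divergence onto $(m^\ep)^{\be-1}$:
\[
\be\int_{\Tt^d}(m^\ep)^{\be-1}\div(D_pH\,m^\ep)\,dx
=-\be(\be-1)\int_{\Tt^d}(m^\ep)^{\be-1}D_pH\cdot D_xm^\ep\,dx
=-2(\be-1)\int_{\Tt^d}(m^\ep)^{\be/2}D_pH\cdot D_x\bigl((m^\ep)^{\be/2}\bigr)\,dx,
\]
after rewriting $(m^\ep)^{\be-1}D_xm^\ep=\tfrac{2}{\be}(m^\ep)^{\be/2}D_x((m^\ep)^{\be/2})$. Now I apply the weighted Cauchy--Schwarz / Young inequality
\[
\bigl|2(\be-1)(m^\ep)^{\be/2}D_pH\cdot D_x((m^\ep)^{\be/2})\bigr|
\le \zeta\bigl|D_x((m^\ep)^{\be/2})\bigr|^2+C_\zeta|D_pH|^2(m^\ep)^{\be},
\]
integrate over $\Tt^d$, bound $|D_pH|^2$ pointwise by its $L^\infty(\Tt^d)$-norm (pulled out of the integral), and choose $\zeta$ small enough that the $\zeta$-term is absorbed by the dissipation term, leaving a strictly negative coefficient $-c$ on $\int|D_x((m^\ep)^{\be/2})|^2$. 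Collecting terms yields exactly \eqref{eq:eq02}.

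The only real subtlety—rather than an obstacle—is bookkeeping the constants so that they are genuinely independent of $\ep$ and, ideally, uniform over $\be\ge\be_0$ (or at least over the finite range of $\be$'s actually used later); since $\be\ge\be_0>1$, all the combinatorial factors $\be(\be-1)$, $4(\be-1)/\be$, etc.\ are controlled from below and above on any bounded $\be$-interval, so this is routine. One should also note that $D_pH$ is evaluated at $(x,Du^\ep(x,t))$ per the convention stated at the start of Section~\ref{igain}, so $\||D_pH|^2\|_{L^\infty(\Tt^d)}$ is a function of $t$; the inequality \eqref{eq:eq02} is pointwise in $t$, which is all that is claimed. No convexity or superquadratic structure of $H$ is needed here—this lemma is purely the energy identity for the Fokker--Planck equation with a given drift—so Assumption A\ref{ah} alone (ensuring $D_pH$ is well defined and smooth) suffices, and the smoothness of $(u^\ep,m^\ep)$ with $m^\ep>0$ from the regularized problem justifies all the manipulations.
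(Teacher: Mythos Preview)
Your proof is correct and follows the standard energy-identity argument for the Fokker--Planck equation. The paper itself does not include a proof of this lemma (it is stated without proof between Lemmas~\ref{lpi1} and~\ref{lpi2}), so your derivation is exactly the routine computation the authors presumably had in mind: differentiate $\int (m^\ep)^\be$, substitute the equation, integrate by parts to produce the dissipation term $-\tfrac{4(\be-1)}{\be}\int|D((m^\ep)^{\be/2})|^2$, and control the drift term by Young's inequality with absorption. Your remarks on the $\be$-dependence of the constants and on the pointwise-in-$t$ nature of the estimate are accurate and appropriate.
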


\begin{Lemma}\label{lpi2}
We have
$$\int_{\Tt^d}(m^\epsilon)^{\be_1}(\tau,x)dx\leq\left(\int_{\Tt^d}\left(m^\epsilon\right)^{\be_0}\left(\tau,x\right)dx\right)^{\theta\kappa}
\left(\int_{\Tt^d}\left(m^\epsilon\right)^{\frac{2^{*}\be_1}{2}}\left(\tau,x\right)dx\right)^{\frac{2\left(1-\kappa\right)}{2^{*}}},$$
where $\kappa$ is given by 
\begin{equation}\label{eq:eqkappa}
\kappa=\frac{2}{d\left(\theta-1\right)+2}.
\end{equation}
\end{Lemma}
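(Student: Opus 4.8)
The plan is to read the claimed bound as nothing more than the standard interpolation inequality for Lebesgue norms applied to the nonnegative function $x\mapsto m^\epsilon(\tau,x)$, followed by a bookkeeping rearrangement of exponents. Fix $\tau\in[0,T]$ and write $f=m^\epsilon(\tau,\cdot)\ge 0$. Since $\theta>1$ one has $\be_0<\be_1=\theta\be_0$, and since $2^{*}>2$ one has $\be_1<\tfrac{2^{*}}{2}\be_1$; thus $\be_1$ lies strictly between $\be_0$ and $\tfrac{2^{*}\be_1}{2}$, and there is a unique $\lambda\in(0,1)$ with
\[
\frac1{\be_1}=\frac{\lambda}{\be_0}+\frac{1-\lambda}{\tfrac{2^{*}\be_1}{2}} .
\]
H\"older's inequality in its interpolation form then gives $\|f\|_{L^{\be_1}(\Tt^d)}\le\|f\|_{L^{\be_0}(\Tt^d)}^{\lambda}\,\|f\|_{L^{2^{*}\be_1/2}(\Tt^d)}^{1-\lambda}$.

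It remains to identify $\lambda$. Multiplying the defining relation by $\be_0$ and using $\be_1=\theta\be_0$ gives $\tfrac1\theta=\lambda+\tfrac{2(1-\lambda)}{2^{*}\theta}$, that is $1=\lambda\theta+\tfrac{2}{2^{*}}(1-\lambda)$, so $\lambda\big(\theta-\tfrac{2}{2^{*}}\big)=1-\tfrac2{2^{*}}$. With $2^{*}=\tfrac{2d}{d-2}$ one has $\tfrac{2}{2^{*}}=\tfrac{d-2}{d}$, hence $1-\tfrac2{2^{*}}=\tfrac2d$ and $\theta-\tfrac2{2^{*}}=\tfrac{d(\theta-1)+2}{d}$; therefore $\lambda=\tfrac{2}{d(\theta-1)+2}=\kappa$, which is exactly \eqref{eq:eqkappa}.

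Finally I would raise $\|f\|_{L^{\be_1}(\Tt^d)}\le\|f\|_{L^{\be_0}(\Tt^d)}^{\kappa}\|f\|_{L^{2^{*}\be_1/2}(\Tt^d)}^{1-\kappa}$ to the power $\be_1$. Since $\|f\|_{L^{\be_0}(\Tt^d)}^{\kappa\be_1}=\big(\int_{\Tt^d}f^{\be_0}\big)^{\kappa\be_1/\be_0}=\big(\int_{\Tt^d}f^{\be_0}\big)^{\kappa\theta}$ and $\|f\|_{L^{2^{*}\be_1/2}(\Tt^d)}^{(1-\kappa)\be_1}=\big(\int_{\Tt^d}f^{2^{*}\be_1/2}\big)^{(1-\kappa)\be_1\cdot\frac{2}{2^{*}\be_1}}=\big(\int_{\Tt^d}f^{2^{*}\be_1/2}\big)^{\frac{2(1-\kappa)}{2^{*}}}$, the inequality becomes precisely the assertion of the Lemma. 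The only points needing care are the ordering $\be_0<\be_1<\tfrac{2^{*}\be_1}{2}$, which makes the interpolation inequality applicable, and $\kappa\in(0,1)$, both immediate from $\theta>1$; I do not anticipate any genuine difficulty, since the statement is purely a consequence of H\"older's inequality.
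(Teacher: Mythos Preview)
Your proof is correct and follows essentially the same approach as the paper: both apply H\"older's interpolation inequality between the exponents $\be_0$ and $\tfrac{2^{*}\be_1}{2}$, identify the interpolation parameter as $\kappa$ from the relation $\tfrac{1}{\theta\be_0}=\tfrac{\kappa}{\be_0}+\tfrac{2(1-\kappa)}{2^{*}\theta\be_0}$, and then raise the resulting norm inequality to the power $\be_1$. Your write-up is in fact more explicit than the paper's, spelling out the computation of $\kappa$ and verifying the ordering $\be_0<\be_1<\tfrac{2^{*}\be_1}{2}$ that makes the interpolation valid.
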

\begin{proof}
H\"older's inequality gives
$$\left(\int_{\Tt^d}m^{\be_1}\right)^{\frac{1}{\be_1}}
\leq\left(\int_{\Tt^d}m^{\be_0}\right)^{\frac{\kappa}{\be_0}}\left(\int_{\Tt^d}m^{\frac{2^{*}}{2}\be_1}\right)^{\frac{\left(1-\kappa\right)}{\frac{2^{*}}{2}\be_1}},$$
where 
$\frac{1}{\theta\be_0}\;=\;\frac{\kappa}{\be_0}\;+\;\frac{2\left(1-\kappa\right)}{2^{*}\theta\be_0}$.
By rearranging the exponents the inequality in the statement follows. The expression for $\kappa$ follows from the previous identity.
\end{proof}

\begin{Lemma}\label{lpi3}
Let $\kappa$ be defined by \eqref{eq:eqkappa}. Then 
$$\left(\int_{\Tt^d}\left(m^\epsilon\right)^{\be_1}\right)^{\left(1-\kappa\right)}
\leq C+\delta\left\|\left(m^\epsilon\right)^{\frac{\be_1}{2}}\right\|_{L^{2^{*}}(\Tt^d)}^{2\left(1-\kappa\right)}.$$
\end{Lemma}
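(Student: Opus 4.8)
The plan is to start from Lemma \ref{lpi2}, which expresses $\int_{\Tt^d}(m^\epsilon)^{\be_1}$ as a product of a power of $\int_{\Tt^d}(m^\epsilon)^{\be_0}$ and a power of $\int_{\Tt^d}(m^\epsilon)^{2^*\be_1/2}=\|(m^\epsilon)^{\be_1/2}\|_{L^{2^*}}^{2}$. Since $\be_0\in[1,\frac{d(1+\mu)}{d(1+\mu)-2})$ is fixed and, by Theorem \ref{emsupere}, $\|m^\epsilon(\cdot,t)\|_{L^{\be_0}(\Tt^d)}$ is bounded uniformly in $\epsilon$ and $t$, the first factor $\bigl(\int_{\Tt^d}(m^\epsilon)^{\be_0}\bigr)^{\theta\kappa}$ is bounded by a constant $C$. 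Hence Lemma \ref{lpi2} immediately yields
\[
\int_{\Tt^d}(m^\epsilon)^{\be_1}\leq C\,\|(m^\epsilon)^{\be_1/2}\|_{L^{2^*}(\Tt^d)}^{\frac{4(1-\kappa)}{2^*}}.
\]
First I would raise both sides to the power $1-\kappa$ to get $\bigl(\int(m^\epsilon)^{\be_1}\bigr)^{1-\kappa}\leq C\,\|(m^\epsilon)^{\be_1/2}\|_{L^{2^*}}^{\frac{4(1-\kappa)^2}{2^*}}$; it then remains to absorb the resulting power of the Sobolev norm into the desired right-hand side $C+\delta\,\|(m^\epsilon)^{\be_1/2}\|_{L^{2^*}}^{2(1-\kappa)}$ via Young's inequality.

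The Young step requires checking that the exponent on the left of the absorption is strictly smaller than the one on the right, i.e. $\frac{4(1-\kappa)^2}{2^*}<2(1-\kappa)$, equivalently $\frac{2(1-\kappa)}{2^*}<1$. Since $0<\kappa<1$ we have $2(1-\kappa)<2<2^*$ (recall $2^*=\frac{2d}{d-2}>2$ for $d>2$), so the strict inequality holds and Young's inequality gives, for any $\delta>0$, a constant $C_\delta$ with
\[
C\,\|(m^\epsilon)^{\be_1/2}\|_{L^{2^*}}^{\frac{4(1-\kappa)^2}{2^*}}
\leq C_\delta+\delta\,\|(m^\epsilon)^{\be_1/2}\|_{L^{2^*}}^{2(1-\kappa)},
\]
which is exactly the claimed bound. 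The only genuinely substantive input is Theorem \ref{emsupere}, used to bound the fixed-exponent factor; everything else is Hölder (already packaged in Lemma \ref{lpi2}) plus Young's inequality.

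The main obstacle — and the one point to be careful about — is verifying that the constraint $\be_0<\frac{d(1+\mu)}{d(1+\mu)-2}$ is precisely what makes $\|m^\epsilon(\cdot,t)\|_{L^{\be_0}}$ fall within the range covered by Theorem \ref{emsupere} (whose conclusion holds for exponents strictly below that threshold), so that the factor $\bigl(\int(m^\epsilon)^{\be_0}\bigr)^{\theta\kappa}$ really is controlled uniformly in $\epsilon$ and in $t\in[0,T]$; if $\be_0$ equalled the endpoint the argument would fail. For $d=2$ the Sobolev exponent $2^*$ should be replaced by any sufficiently large finite exponent (as in the $d=2$ branch of Theorem \ref{emsupere}), and the inequality $2(1-\kappa)<2^*$ is then trivially satisfied; the rest of the argument is unchanged. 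I would phrase the final statement so that the constant $\delta$ is arbitrary, recording $C=C_\delta$ as depending on $\delta$, $\be_0$, $\theta$, $d$, $\mu$ and the bounds from Theorem \ref{emsupere}, but independent of $\epsilon$.
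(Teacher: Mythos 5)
Your argument is essentially correct, but it is a genuinely different route from the paper's, and it contains one computational slip worth fixing. The paper proves Lemma \ref{lpi3} without invoking Lemma \ref{lpi2} or Theorem \ref{emsupere} at all: it interpolates $L^{\be_1}$ between $L^{1}$ and $L^{2^{*}\be_1/2}$ with parameter $\lam=\frac{2}{d(\be_1-1)+2}$, uses only the fact that $\int_{\Tt^d}m\,dx=1$ to kill the first factor, raises to the power $1-\kappa$ to get the exponent $2(1-\kappa)(1-\lam)<2(1-\kappa)$, and concludes by Young. This makes the lemma a purely measure-theoretic statement about probability densities, valid with no PDE input. Your version instead interpolates against $L^{\be_0}$ (via Lemma \ref{lpi2}) and must then import the uniform bound on $\|m^\epsilon\|_{L^\infty(0,T;L^{\be_0})}$ from Theorem \ref{emsupere}. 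This is not circular (Theorem \ref{emsupere} is proved earlier and is used later in this section anyway, e.g.\ in Corollary \ref{cpi1}), but it makes the lemma depend on Assumptions A\ref{ah}--A\ref{bcc} and A\ref{aspdpph} where the paper's proof needs nothing beyond mass conservation. The paper's choice is therefore the more economical one; yours buys nothing extra.

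The slip: you write $\int_{\Tt^d}(m^\epsilon)^{2^{*}\be_1/2}=\|(m^\epsilon)^{\be_1/2}\|_{L^{2^{*}}}^{2}$, but the correct identity is $\int_{\Tt^d}(m^\epsilon)^{2^{*}\be_1/2}=\|(m^\epsilon)^{\be_1/2}\|_{L^{2^{*}}}^{2^{*}}$, so the factor in Lemma \ref{lpi2} is
\[
\Bigl(\int_{\Tt^d}(m^\epsilon)^{\frac{2^{*}\be_1}{2}}\Bigr)^{\frac{2(1-\kappa)}{2^{*}}}=\bigl\|(m^\epsilon)^{\frac{\be_1}{2}}\bigr\|_{L^{2^{*}}(\Tt^d)}^{2(1-\kappa)},
\]
not $\|\cdot\|^{\frac{4(1-\kappa)}{2^{*}}}$. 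After raising to the power $1-\kappa$ the exponent you must absorb is $2(1-\kappa)^{2}$, and the Young step requires $2(1-\kappa)^{2}<2(1-\kappa)$, i.e.\ $\kappa>0$, which holds by \eqref{eq:eqkappa} since $\theta>1$. So the conclusion survives the correction, but the condition you actually verified, $\frac{2(1-\kappa)}{2^{*}}<1$, is not the relevant one.
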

\begin{proof}
Let 
$\lam\;=\;\frac{2}{d\left(\be_1-1\right)+2}.$
Then, 
$$\int_{\Tt^d}\left|m^{\be_1}\right|dx\leq
\left(\int_{\Tt^d}mdx\right)^{\be_1\lam}\left(\int_{\Tt^d}m^{\frac{2^{*}\be_1}{2}}dx\right)^{\frac{2\left(1-\lam\right)}{2^{*}}}.$$
Because $m$ is a probability measure for every $t\in\left[0,T\right]$
one obtains
$$\left(\int_{\Tt^d}m^{\be_1}dx\right)^{\left(1-\kappa\right)}\leq \left\|m^{\frac{\be_1}{2}}\right\|_{L^{2^{*}}(\Tt^d)}^{2\left(1-\kappa\right)\left(1-\lam\right)}.$$
Finally, since $\left(1-\lam\right)<1$ a further application of the Young's inequality weighted by $\delta$ establishes the result.
\end{proof}

\begin{Proposition}\label{ppi1}
We have
\begin{align*}
\int_{\Tt^d}\left(m^\epsilon\right)^{\be_1}dx\leq\left[\int_{\Tt^d}\left(m^\epsilon\right)^{\be_0}dx\right]^{\theta\kappa}
\left[C+C\left(\int_{\Tt^d}\left|D_{x}\left(\left(m^\epsilon\right)^{\frac{\be_1}{2}}\right)\right|^{2}dx\right)^{\left(1-\kappa\right)}\right],
\end{align*}where $\kappa$ is given by (\ref{eq:eqkappa}).
\end{Proposition}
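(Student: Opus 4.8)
The plan is to combine the three preceding lemmas by chaining their inequalities in the natural order: start with the Gagliardo--Nirenberg-type interpolation of Lemma \ref{lpi2}, substitute the bound on the middle factor coming from the Sobolev inequality packaged in Lemma \ref{lpi3}, and absorb the resulting small term. First I would invoke Lemma \ref{lpi2}, which expresses $\int_{\Tt^d}(m^\epsilon)^{\be_1}dx$ as the product $\left(\int_{\Tt^d}(m^\epsilon)^{\be_0}dx\right)^{\theta\kappa}\left(\int_{\Tt^d}(m^\epsilon)^{2^*\be_1/2}dx\right)^{2(1-\kappa)/2^*}$, with $\kappa$ given by \eqref{eq:eqkappa}. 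The second factor here is exactly $\left\|(m^\epsilon)^{\be_1/2}\right\|_{L^{2^*}(\Tt^d)}^{2(1-\kappa)}$, so the only work left is to bound this $L^{2^*}$ norm in terms of the gradient term appearing in the statement.

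For that bound I would use the Sobolev (Gagliardo--Nirenberg) inequality $\left\|(m^\epsilon)^{\be_1/2}\right\|_{L^{2^*}(\Tt^d)}^{2}\leq C\int_{\Tt^d}\left|D_x\left((m^\epsilon)^{\be_1/2}\right)\right|^2dx+C\left\|(m^\epsilon)^{\be_1/2}\right\|_{L^2(\Tt^d)}^2$, i.e. the usual $W^{1,2}\hookrightarrow L^{2^*}$ embedding on the torus applied to $(m^\epsilon)^{\be_1/2}$; raising to the power $(1-\kappa)$ and using that $\left\|(m^\epsilon)^{\be_1/2}\right\|_{L^2(\Tt^d)}^2=\int_{\Tt^d}(m^\epsilon)^{\be_1}dx$, this gives
\[
\left\|(m^\epsilon)^{\be_1/2}\right\|_{L^{2^*}(\Tt^d)}^{2(1-\kappa)}\leq C\left(\int_{\Tt^d}\left|D_x\left((m^\epsilon)^{\be_1/2}\right)\right|^2dx\right)^{1-\kappa}+C\left(\int_{\Tt^d}(m^\epsilon)^{\be_1}dx\right)^{1-\kappa}.
\]
By Lemma \ref{lpi3}, the last term is itself controlled by $C+\delta\left\|(m^\epsilon)^{\be_1/2}\right\|_{L^{2^*}(\Tt^d)}^{2(1-\kappa)}$; choosing $\delta$ small and absorbing, one concludes
\[
\left\|(m^\epsilon)^{\be_1/2}\right\|_{L^{2^*}(\Tt^d)}^{2(1-\kappa)}\leq C+C\left(\int_{\Tt^d}\left|D_x\left((m^\epsilon)^{\be_1/2}\right)\right|^2dx\right)^{1-\kappa}.
\]
Feeding this back into the product from Lemma \ref{lpi2} yields precisely the claimed inequality.

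The only point that requires a little care — and the place I'd flag as the main obstacle — is the bookkeeping of exponents: one must check that the $L^2$ norm of $(m^\epsilon)^{\be_1/2}$ that appears when splitting the Sobolev inequality is genuinely the $L^{\be_1}$ norm of $m^\epsilon$ (so that Lemma \ref{lpi3} applies verbatim), and that the power $1-\kappa$ is consistent throughout; the parameters $\be_0,\be_1=\theta\be_0$ and $\kappa=\frac{2}{d(\theta-1)+2}$ are fixed precisely so that these match. The admissibility condition $1\le\be_0<\frac{d(1+\mu)}{d(1+\mu)-2}$ guarantees, via Theorem \ref{emsupere}, that $\int_{\Tt^d}(m^\epsilon)^{\be_0}dx$ is finite and uniformly bounded in $\epsilon$, so the first factor on the right-hand side is harmless; but for the statement as written it is enough to leave that factor intact. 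No uniform-in-$\epsilon$ issue arises here since every constant depends only on $d$, $\mu$, $\theta$ and $\be_0$.
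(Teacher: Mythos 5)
Your proposal is correct and follows essentially the same route as the paper: Sobolev's inequality applied to $(m^\epsilon)^{\be_1/2}$, raised to the power $1-\kappa$, with the lower-order term $\left(\int_{\Tt^d}(m^\epsilon)^{\be_1}\right)^{1-\kappa}$ controlled and absorbed via Lemma \ref{lpi3}, and the result fed into the interpolation of Lemma \ref{lpi2}. The only difference is cosmetic ordering (the paper bounds the $L^{2^*}$ factor first and then invokes Lemma \ref{lpi2}, while you start from Lemma \ref{lpi2}), so nothing further is needed.
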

\begin{proof}
Sobolev's Theorem implies that 
\begin{align}\label{eq:eq6}
\left\|\left(m\right)^{\frac{\be_1}{2}}\right\|_{L^{2^{*}}(\Tt^d)}^{2\left(1-\kappa\right)}
\leq C\left(\int_{\Tt^d}\left|D_{x}\left(\left(m\right)^{\frac{\be_1}{2}}\right)\right|^{2}\right)^{\left(1-\kappa\right)}
+C\left(\int_{\Tt^d}\left|\left(m\right)^{\be_1}\right|\right)^{\left(1-\kappa\right)}.
\end{align}
Using
Lemma \ref{lpi3} we obtain 
\begin{align}\notag
\left(\int_{\Tt^d}m^{\frac{2^{*}}{2}\be_1}\left(\tau,x\right)dx\right)^{\frac{2}{2^{*}}\left(1-\kappa\right)}
&=\left\|m^{\frac{\be_1}{2}}\right\|_{L^{2^{*}}(\Tt^d)}^{2\left(1-\kappa\right)}\\\label{eq:eq07}
&\le C\left(\int_{\Tt^d}\left|D_{x}\left(m^{\frac{\be_1}{2}}\right)\right|^{2}\right)^{\left(1-\kappa\right)}+ C.
\end{align}
By combining inequality \eqref{eq:eq07} with Lemma \ref{lpi2} the result follows.
\end{proof}

Next, we control the derivative with respect to time of $\left\|m^\epsilon\right\|_{L^{\be_1}(\Tt^d)}^{\be_1}$.

\begin{Proposition}\label{ppi2}
Let $(u^\epsilon,m^\epsilon)$ be a solution of \eqref{eq:smfg}. If $\kappa$ is given as in \eqref{eq:eqkappa}, then 
\begin{align}\label{eq:eq08}
\frac{d}{dt}\int_{\Tt^d}(m^\ep)^{\be_1}dx\leq C+C\left\|\left|D_{p}H\right|^{2}\right\|_{L^\infty(\Tt^d)}^{r}\left(\int_{\Tt^d}(m^\ep)^{\be_0}dx\right)^{\theta},
\end{align}where $r=\frac{1}{\kappa}.$
\end{Proposition}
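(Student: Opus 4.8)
\textbf{Proof proposal for Proposition \ref{ppi2}.}

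The plan is to start from the differential inequality of Lemma \ref{lpi1} applied with $\be=\be_1$, namely
\[
\frac{d}{dt}\int_{\Tt^d}(m^\ep)^{\be_1}dx\leq C\big\||D_pH|^2\big\|_{L^\infty(\Tt^d)}\int_{\Tt^d}(m^\ep)^{\be_1}dx-c\int_{\Tt^d}\big|D_x\big((m^\ep)^{\be_1/2}\big)\big|^2dx,
\]
and then absorb the first term on the right-hand side by estimating $\int_{\Tt^d}(m^\ep)^{\be_1}$ via the interpolation machinery already assembled. Precisely, Proposition \ref{ppi1} bounds $\int_{\Tt^d}(m^\ep)^{\be_1}$ by $\big[\int_{\Tt^d}(m^\ep)^{\be_0}\big]^{\theta\kappa}$ times $C+C\big(\int_{\Tt^d}|D_x((m^\ep)^{\be_1/2})|^2\big)^{1-\kappa}$. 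So the product $\||D_pH|^2\|_{L^\infty}\int_{\Tt^d}(m^\ep)^{\be_1}$ is controlled by
\[
C\||D_pH|^2\|_{L^\infty}\Big[\!\int_{\Tt^d}(m^\ep)^{\be_0}\Big]^{\theta\kappa}\Big(\!\int_{\Tt^d}\big|D_x((m^\ep)^{\be_1/2})\big|^2\Big)^{1-\kappa}+C\||D_pH|^2\|_{L^\infty}\Big[\!\int_{\Tt^d}(m^\ep)^{\be_0}\Big]^{\theta\kappa}.
\]

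The key step is to apply Young's inequality to the first product with exponents $\frac1\kappa$ and $\frac1{1-\kappa}$: the factor $\big(\int|D_x((m^\ep)^{\be_1/2})|^2\big)^{1-\kappa}$ goes with exponent $\frac1{1-\kappa}$, producing $\delta\int_{\Tt^d}|D_x((m^\ep)^{\be_1/2})|^2$, which is absorbed into the negative gradient term $-c\int_{\Tt^d}|D_x((m^\ep)^{\be_1/2})|^2$ of Lemma \ref{lpi1} for $\delta$ small. The remaining factor $\||D_pH|^2\|_{L^\infty}\big[\int(m^\ep)^{\be_0}\big]^{\theta\kappa}$ is raised to the power $\frac1\kappa$, giving $C\||D_pH|^2\|_{L^\infty}^{1/\kappa}\big[\int(m^\ep)^{\be_0}\big]^{\theta}$, which is exactly the claimed term with $r=\frac1\kappa$ (recall $r_\theta=\frac{d(\theta-1)+2}{2}=\frac1\kappa$ by \eqref{eq:eqkappa}). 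For the leftover lower-order piece $C\||D_pH|^2\|_{L^\infty}\big[\int(m^\ep)^{\be_0}\big]^{\theta\kappa}$, since $\kappa\le 1$ and $r=\frac1\kappa\ge 1$, one bounds it by $C+C\||D_pH|^2\|_{L^\infty}^{r}\big[\int(m^\ep)^{\be_0}\big]^{\theta}$ using Young again (or simply $a\le 1+a^{1/\kappa}$ applied to $a=\||D_pH|^2\|_{L^\infty}\big[\int(m^\ep)^{\be_0}\big]^{\theta\kappa}$, noting $a^{1/\kappa}=\||D_pH|^2\|_{L^\infty}^{1/\kappa}\big[\int(m^\ep)^{\be_0}\big]^{\theta}$), so it is dominated by the asserted right-hand side.

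The only delicate bookkeeping — and the place to be careful — is making sure the Young exponent $\frac1{1-\kappa}$ applied to the gradient factor yields precisely the first power of $\int_{\Tt^d}|D_x((m^\ep)^{\be_1/2})|^2$ (so that absorption into the Lemma \ref{lpi1} term is legitimate), while the conjugate exponent $\frac1\kappa$ lands on $\||D_pH|^2\|_{L^\infty}$ to give the stated $r=\frac1\kappa$; this is immediate from $(1-\kappa)\cdot\frac1{1-\kappa}=1$ and is really the whole content of the proof. No growth hypothesis on $H$ beyond what enters Lemma \ref{lpi1} is needed here, so I do not anticipate any genuine obstacle — the statement is an arithmetic consequence of Lemma \ref{lpi1} and Proposition \ref{ppi1}.
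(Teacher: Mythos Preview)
Your proposal is correct and follows essentially the same approach as the paper: start from Lemma \ref{lpi1} with $\be=\be_1$, substitute the interpolation bound from Proposition \ref{ppi1}, then apply Young's inequality with conjugate exponents $r=\frac{1}{\kappa}$ and $s=\frac{1}{1-\kappa}$ to absorb the gradient term and produce the stated power of $\||D_pH|^2\|_{L^\infty}$. The handling of the lower-order term via $a\le 1+a^{1/\kappa}$ is exactly what the paper does in its final step.
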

\begin{proof}
Using
Lemma \ref{lpi1} with $\be\equiv\be_1$, and 
applying Proposition \ref{ppi1} we have
\begin{align*}
\frac{d}{dt}\int_{\Tt^d}m^{\be_1}
&\leq C\left\|\left|D_{p}H\right|^{2}\right\|_{L^\infty(\Tt^d)}\left(\int_{\Tt^d}m^{\be_0}\right)^{\theta\kappa}
\left[C\left(\int_{\Tt^d}\left|D_{x}\left(m^{\frac{\be_1}{2}}\right)\right|^{2}dx\right)^{\left(1-\kappa\right)}+C\right]\\
&-c\int_{\Tt^d}\left|D_{x}\left(m^{\frac{\be_1}{2}}\right)\right|^{2}\\
&\leq C\left\|\left|D_{p}H\right|^{2}\right\|_{L^\infty(\Tt^d)}\left(\int_{\Tt^d}m^{\be_0}\right)^{\theta\kappa}
\left(\int_{\Tt^d}\left|D_{x}\left(m^{\frac{\be_1}{2}}\right)\right|^{2}\right)^{\left(1-\kappa\right)}\\
&+ C\left\|\left|D_{p}H\right|^{2}\right\|_{L^\infty(\Tt^d)}\left(\int_{\Tt^d}m^{\be_0}\right)^{\theta\kappa}
-c\int_{\Tt^d}\left|D_{x}\left(m^{\frac{\be_1}{2}}\right)\right|^{2}\\
&\leq C\left\|\left|D_{p}H\right|^{2}\right\|_{L^\infty(\Tt^d)}\left(\int_{\Tt^d}m^{\be_0}\right)^{\theta\kappa}
+C\left\|\left|D_{p}H\right|^{2}\right\|_{L^\infty(\Tt^d)}^{r}\left(\int_{\Tt^d}m^{\be_0}\right)^{\theta}\\
&\leq C+C\left\|\left|D_{p}H\right|^{2}\right\|_{L^\infty(\Tt^d)}^{r}\left(\int_{\Tt^d}m^{\be_0}\right)^{\theta},
\end{align*}
where the last two inequalities follow by applying Young's inequality 
with $\varepsilon$ for the conjugate exponents $r$ and $s$ given by
$s=\frac{1}{1-\kappa}$ and $r=\frac{1}{\kappa}.$
\end{proof}

\begin{Corollary}\label{cpi1}
Suppose that $(u^\epsilon,m^\epsilon)$ is a solution of
\eqref{eq:smfg}.
Let $r$ be given as in
Proposition \ref{ppi2}. Then 
\begin{align*}
\int_{\Tt^d}m^{\be_1}(\tau,x)dx\leq C+C\left\|\left|D_{p}H\right|^{2}\right\|_{L^\infty(\Tt^d)}^{r}.
\end{align*}
\end{Corollary}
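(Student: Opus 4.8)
The plan is to integrate the differential inequality from Proposition \ref{ppi2} in time, using the elementary first-order estimate of Section \ref{sec:eles} to control the initial integral $\int_{\Tt^d} (m^\epsilon)^{\be_1}(0,x)\,dx$ (this is bounded because $m_0$ is smooth, by Assumption A\ref{ag}) and to handle the $\int_{\Tt^d}(m^\epsilon)^{\be_0}$ factor appearing on the right-hand side. First I would recall that, by the choice $1\le \be_0 < \frac{d(1+\mu)}{d(1+\mu)-2}$ together with Theorem \ref{emsupere}, the quantity $\|m^\epsilon(\cdot,t)\|_{L^{\be_0}(\Tt^d)}$ is bounded uniformly in $t$ and in $\epsilon$; hence $\left(\int_{\Tt^d}(m^\epsilon)^{\be_0}\,dx\right)^{\theta}\le C$ with $C$ independent of $\epsilon$.

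With that substitution, Proposition \ref{ppi2} reduces to
\[
\frac{d}{dt}\int_{\Tt^d}(m^\epsilon)^{\be_1}(t,x)\,dx \;\le\; C + C\left\||D_pH|^2\right\|_{L^\infty(\Tt^d)}^{r}.
\]
Integrating this from $0$ to $\tau$, and bounding $\int_{\Tt^d}(m^\epsilon)^{\be_1}(0,x)\,dx$ by a constant depending only on $m_0$, gives
\[
\int_{\Tt^d}(m^\epsilon)^{\be_1}(\tau,x)\,dx \;\le\; C + C\int_0^\tau \left\||D_pH|^2\right\|_{L^\infty(\Tt^d)}^{r}\,dt \;\le\; C + CT\,\sup_{t\in[0,T]}\left\||D_pH|^2\right\|_{L^\infty(\Tt^d)}^{r},
\]
which is exactly the claimed bound once one absorbs $T$ into $C$ (recall $T$ is fixed) and notes that, as stated at the start of Section \ref{igain}, $H$ and its derivatives are evaluated at $(x,Du^\epsilon(x,t))$, so $\||D_pH|^2\|_{L^\infty(\Tt^d)}$ is really $\||D_pH(x,Du^\epsilon)|^2\|_{L^\infty(0,T;L^\infty(\Tt^d))}$.

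The only subtlety — and the main thing to get right rather than a genuine obstacle — is to confirm that all constants produced along this chain (in Theorem \ref{emsupere}, in Proposition \ref{ppi2}, and in the first-order estimate used to control the data term) are genuinely independent of $\epsilon$; this is guaranteed by the construction of $g_\epsilon$ in \eqref{gep} and is already asserted in each of those results. Since the differential inequality is first-order and linear in the unknown $\int_{\Tt^d}(m^\epsilon)^{\be_1}$ after the $\be_0$-substitution, no Gronwall argument is needed and the integration is immediate, which is why the statement is phrased as a corollary.
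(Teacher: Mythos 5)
Your proposal is correct and follows essentially the same route as the paper: integrate the differential inequality of Proposition \ref{ppi2} from $0$ to $\tau$, bound the data term $\int_{\Tt^d}(m^\epsilon)^{\be_1}(0,x)\,dx$ using the smoothness of $m_0$, and bound $\int_{\Tt^d}(m^\epsilon)^{\be_0}\,dx$ uniformly in time via Theorem \ref{emsupere}. The only slip is attributing the control of the $\be_0$ factor and of the initial integral to the ``first-order estimate'' of Section \ref{sec:eles}; in fact, as your own parenthetical and subsequent sentences make clear, those bounds come from Assumption A\ref{ag} and Theorem \ref{emsupere} respectively, not from Proposition \ref{pehm}, so that opening clause should simply be dropped. (Note also that the paper's phrase ``integrate over $(\tau,T)$'' is a typo; the displayed inequality \eqref{eq:cor1} is over $(0,\tau)$, consistent with your argument.)
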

\begin{proof}
Integrate (\ref{eq:eq08}) in time over $\left(\tau,T\right)$. This yields
\begin{align}\label{eq:cor1}
\int_{\Tt^d}m^{\be_1}\left(\tau,x\right)dx
&\leq C\left\|\left|D_{p}H\right|^{2}\right\|_{L^\infty(\Tt^d)}^{r}\int_{0}^{\tau}\left(\int_{\Tt^d}m^{\be_0}dx\right)^{\theta}dt+C.
\end{align}
From Proposition \ref{emsupere}, we have $\int_{\Tt^d}m^{\be_{0}}\left(\tau,x\right)dx\leq C$. The result is then established.
\end{proof}

\subsection{Interpolated bounds}\label{sec84}

We now obtain estimates for $m^\ep$ in terms of the $L^\infty$-norm of $Du^\ep$ by interpolating previous results. 

\begin{Lemma}\label{lem:lem1sec84}
Let $(u^\epsilon,m^\epsilon)$ be a solution of \eqref{eq:smfg}. Assume A\ref{ah}-\ref{bcc} and A\ref{aspdpph} hold. 
Assume further that $\theta,\,p,\,r>1$, $0\leq \upsilon\leq 1$ are such that \eqref{eq:eq3sec84}-\eqref{eq:eq4sec84}.
Let $\beta_\upsilon=\frac{\theta\beta_0}{\theta+\upsilon-\theta\upsilon},$ where $\beta_0\in\left[1,\frac{d(1+\mu)}{d(1+\mu)-2}\right)$.
Then
\[
\|g(m^\ep)\|_{L^\infty(0,T;L^p(\Tt^d))}\leq C+C\left\|\left|D_pH\right|^2\right\|_{L^\infty(0,T;L^\infty(\Tt^d))}^{\frac{r\upsilon\af}{\theta\be_0}}.
\]
\end{Lemma}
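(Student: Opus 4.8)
The plan is to combine Corollary \ref{cpi1} with an interpolation in the exponent and in time, converting the space–time $L^\infty$ control of $|D_pH|^2$ into an $L^\infty(0,T;L^p)$ bound on $g(m^\ep)=(m^\ep)^\alpha$. First I would recall that, by Corollary \ref{cpi1}, for $\be_1=\theta\be_0$ we have
\[
\sup_{t\in[0,T]}\int_{\Tt^d}(m^\ep)^{\theta\be_0}\,dx\;\leq\; C+C\big\||D_pH|^2\big\|_{L^\infty(0,T;L^\infty(\Tt^d))}^{r},
\qquad r=\tfrac1\kappa=r_\te ,
\]
using \eqref{eq:eqkappa} and \eqref{eq:eq3sec84} to identify $r$ with $r_\te$. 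On the other hand, Proposition \ref{emsupere} (equivalently the case $\upsilon=0$) gives $\sup_t\int_{\Tt^d}(m^\ep)^{\be_0}\,dx\leq C$ since $\be_0<\frac{d(1+\mu)}{d(1+\mu)-2}$. These are the two endpoints to interpolate between.

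Next I would interpolate $L^{\be_\upsilon}$ between $L^{\be_0}$ and $L^{\theta\be_0}$. With $\be_\upsilon=\frac{\theta\be_0}{\theta+\upsilon-\theta\upsilon}$, a direct check shows $\frac1{\be_\upsilon}=\frac{1-\upsilon}{\be_0}+\frac{\upsilon}{\theta\be_0}$, so by the interpolation inequality for $L^p$ norms (log-convexity), for every $t$,
\[
\|m^\ep(\cdot,t)\|_{L^{\be_\upsilon}(\Tt^d)}\;\leq\;\|m^\ep(\cdot,t)\|_{L^{\be_0}(\Tt^d)}^{1-\upsilon}\,\|m^\ep(\cdot,t)\|_{L^{\theta\be_0}(\Tt^d)}^{\upsilon}.
\]
Raising to the power $\be_\upsilon$ and taking the supremum in $t$, then inserting the two endpoint bounds above, yields
\[
\sup_t\int_{\Tt^d}(m^\ep)^{\be_\upsilon}\,dx\;\leq\; C+C\big\||D_pH|^2\big\|_{L^\infty(0,T;L^\infty)}^{\,\upsilon r/\theta},
\]
because the power of $m^\ep$ in $L^{\theta\be_0}$ that appears is $\upsilon\be_\upsilon$, and $\upsilon\be_\upsilon/(\theta\be_0)\cdot r = \upsilon r/\theta$ after bookkeeping (the $L^{\be_0}$ factor only contributes a constant). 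Finally, since $p=p_{\upsilon,\te}=\be_\upsilon/\alpha$, we have $\|g(m^\ep)\|_{L^p}^p=\|(m^\ep)^\alpha\|_{L^p}^p=\|m^\ep\|_{L^{\be_\upsilon}}^{\be_\upsilon}$, so taking $p$-th roots of the previous display and using $p_{\upsilon,\te}>1$ from \eqref{eq:eq4sec84} (so that the map $x\mapsto x^{1/p}$ is subadditive up to constants, and $C+C x^{a}$ raised to $1/p$ is again of the form $C+Cx^{a/p}$) gives
\[
\|g(m^\ep)\|_{L^\infty(0,T;L^p(\Tt^d))}\;\leq\; C+C\big\||D_pH|^2\big\|_{L^\infty(0,T;L^\infty(\Tt^d))}^{\frac{\upsilon r\alpha}{\theta\be_0}},
\]
which is the claimed estimate, since $\be_\upsilon/(\alpha p)=1$ so the exponent is $(\upsilon r/\theta)\cdot(1/p)=(\upsilon r/\theta)\cdot(\alpha/\be_\upsilon)$ and one must simply verify this equals $\frac{r\upsilon\alpha}{\theta\be_0}$ when the constant-plus-power structure is tracked; the cleaner route is to note $\sup_t\|m^\ep\|_{L^{\be_\upsilon}}\leq C+C\||D_pH|^2\|^{r\upsilon/(\theta\be_\upsilon)}$ and then $\|g(m^\ep)\|_{L^p}=\|m^\ep\|_{L^{\be_\upsilon}}^{\alpha}$.

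The only real subtlety — the step I expect to need the most care — is the exponent bookkeeping: making sure that $\kappa$ from \eqref{eq:eqkappa} indeed produces $r=r_\te$ from \eqref{eq:eq3sec84}, that the interpolation weight is exactly $\upsilon$ for the chosen $\be_\upsilon$, and that after raising everything to the $1/p$ power with $p>1$ the inequality $(C+Cx^a)^{1/p}\leq C+Cx^{a/p}$ is applied correctly so the final exponent collapses to $\frac{r\upsilon\alpha}{\theta\be_0}$. Everything else is either a direct quotation of Corollary \ref{cpi1} and Proposition \ref{emsupere} or an application of Hölder/interpolation and Young's inequality, none of which uses the superquadratic structure beyond what is already baked into those cited results.
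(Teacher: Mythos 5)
Your proof follows exactly the paper's route: interpolate $L^{\be_\upsilon}$ between $L^{\be_0}$ (controlled by Theorem \ref{emsupere}) and $L^{\theta\be_0}$ (controlled via Corollary \ref{cpi1}), then pass to $\left\|g(m^\ep)\right\|_{L^p(\Tt^d)}$ using $p=\be_\upsilon/\af$. Be aware, though, of two arithmetic slips that do not cancel for general $\theta,\upsilon$: the claimed exponent $\upsilon r/\theta$ in the bound for $\sup_t\int_{\Tt^d}(m^\ep)^{\be_\upsilon}$ should be $\frac{r\upsilon}{\theta+\upsilon-\theta\upsilon}=\frac{r\upsilon\be_\upsilon}{\theta\be_0}$, and in your ``cleaner route'' the exponent should read $r\upsilon/(\theta\be_0)$ rather than $r\upsilon/(\theta\be_\upsilon)$; with these corrected, the final exponent $\frac{r\upsilon\af}{\theta\be_0}$ drops out exactly as in the paper via the identity $p(\theta+\upsilon-\theta\upsilon)=\theta\be_0/\af$ from \eqref{eq:eq4sec84}.
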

\begin{proof}
As before, we omit the $\ep$ in the proof. H\"older's inequality gives
\[
\left(\int_{\Tt^d}m^{\beta_\upsilon}\right)^\frac{1}{\beta_\upsilon}\leq \left(\int_{\Tt^d}m^{\beta_0}\right)^\frac{1-\upsilon}{\beta_0}\left(\int_{\Tt^d}m^{\theta\beta_0}\right)^\frac{\upsilon}{\theta\beta_0},
\]since $\frac{1}{\beta_\upsilon}=\frac{1-\upsilon}{\be_0}+\frac{\upsilon}{\theta\be_0}$.

Theorem \ref{emsupere} ensures that 
\[
\int_{\Tt^d}m^{\beta_\upsilon}\leq C\left(\int_{\Tt^d}m^{\theta\beta_0}\right)^\frac{\upsilon}{\theta+\upsilon-\theta\upsilon}.
\]
On the other hand, Corollary \ref{cpi1} gives
\[
\int_{\Tt^d}m^{\theta\beta_0}\leq C+C\left\|\left|D_pH\right|^2\right\|_{L^\infty(0,T;L^\infty(\Tt^d))}^{r},
\]
which in turn leads to 
\[
\int_{\Tt^d}m^{\beta_\upsilon}\leq C+C\left\|\left|D_pH\right|^2\right\|_{L^\infty(0,T;L^\infty(\Tt^d))}^\frac{r\upsilon}{(\theta+\upsilon-\theta\upsilon)}.
\]
Note that $\left\|g(m)\right\|_{L^p(\Tt^d)}=\left(\int_{\Tt^d}m^{\alpha p}\right)^\frac{1}{p}$. Because of \eqref{eq:eq4sec84} it follows that 
\[\left\|g(m)\right\|_{L^p(\Tt^d)}\leq C+C\left\|\left|D_pH\right|^2\right\|_{L^\infty(0,T;L^\infty(\Tt^d))}^\frac{r\upsilon}{p(\theta+\upsilon-\theta\upsilon)}.\]By noticing that $p(\theta+\upsilon-\theta\upsilon)=\frac{\theta\be_0}{\af},$ because of \eqref{eq:eq4sec84}, the Lemma is established.
\end{proof}
\begin{proof}[Proof of Theorem \ref{cor:cor2sec84}.]
Theorem \ref{cor:cor2sec84} follows from Lemma \ref{lem:lem1sec84} and A\ref{adph}.
\end{proof}

\section{Bounds for the Hamilton-Jacobi equation}
\label{pub}


Next we control $\left\|u^\epsilon\right\|_{L^\infty(\Tt^d\times\left[0,T\right])}$ in terms of $\left\|g_\epsilon(m^\ep)\right\|_{L^\infty(\left[0,T\right],L^p(\Tt^d))}$. Because we already have lower bounds for $u^\ep$,  since $g\geq 0$, see \cite{GPM2}, it suffices in what
follows to obtain upper bounds. 

We start by presenting the proof of Lemma \ref{imphc0}.
\begin{proof}[Proof of Lemma \ref{imphc0}.]
 For ease of notation, we omit the $\epsilon$ in $m^\ep$. By using Proposition \ref{plb} with $b=0$ and $\zeta_0=\te(\cdot,\tau)=\delta_{x}$, 
$0\leq \tau<T$ we obtain the estimate
\begin{align*}
u(x,\tau)\le &(T-\tau)\max_{z\in \Tt^d} L(z,0)\\
&+
\int_\tau^T\int_{\Tt^d}g_\ep(m)(y,t)\te(y,t-\tau)dydt
+\int_{\Tt^d} u(y,T)\te(y,T-\tau)dy.
\end{align*}
The main issue is to control
\begin{equation}
\label{AAA}
\int_\tau^T\int_{\Tt^d} g_\ep(m)(y,t)\te(y,t-\tau)dydt. 
\end{equation}
For $\dfrac 1 p+\dfrac 1 q=1$, the heat kernel satisfies
\[
\|\te(\cdot, t)\|_q\leq \frac{C}{t^{\frac{d}{2p}}}.
\]
Hence,
\[
\int_{\Tt^d} g_\ep(m)(y,t)\te(y,t-\tau)dy
\leq \frac{C}{(t-\tau)^{\frac{d}{2p}}}\|g_\ep(m(\cdot,t))\|_{L^p(\Tt^d)}. 
\]
Thus if $d<2p$ we have
\[
\int_\tau^T\int_{\Tt^d}g_\ep(m)(y,t)\te(y,t-\tau)dydt\le 
C\|g_\ep(m)\|_{L^\infty(0,T;L^{p}(\Tt^d))}.
\]
\end{proof}

\section{Regularity by the adjoint method}
\label{ram}

The aim of this Section is to obtain estimates for $\|Du^\ep\|_{L^\infty(0,T;L^\infty(\Tt^d))}$. The key tools are the 
adjoint method \cite{E3}, and the methods developed in \cite{GM} (see also \cite{GPatVrt}). In what follows we obtain Lipschitz estimates for the solutions of the Hamilton-Jacobi equation in terms of $L^\infty(0,T;L^p(\Tt^d))$ norms of the nonlinearity $g$. This result is important not only for its role in the realm of the mean-field games theory, but also adds to the current understanding of the regularity of superquadratic Hamilton-Jacobi equations. For some related results, see \cite{Bsuper}, \cite{CDLP}, and \cite{CarSil}, where the authors investigate H\"older regularity.

Our main a-priori estimate is the following:
\begin{teo}
\label{uReg1}
Suppose  A\ref{ah}-A\ref{aspdpph} hold.
Let $(u^\ep,m^\ep)$ be a solution of \eqref{eq:smfg} and assume that $p>d$.  
Then 
\begin{align*}
\|Du^\ep\|_{L^\infty(0,T;L^\infty(\Tt^d))}\leq C+& C\|u^\ep\|_{L^\infty(0,T;L^\infty(\Tt^d))}\\
+C\|g_\ep(m^\ep)\|_{L^\infty(0,T;L^p(\Tt^d))}&\Big(\|Du^\ep\|^\mu_{L^{\infty}(0,T;L^{\infty}(\Tt^d))}(1+\|u^\ep\|_{L^{\infty}(0,T;L^{\infty}(\Tt^d))})\Big),
\end{align*}
where $\mu$ is the exponent given by Assumption A\ref{adph}. 
\end{teo}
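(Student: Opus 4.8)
The plan is to run the nonlinear adjoint method of \cite{E3} on the Hamilton-Jacobi equation $-u^\ep_t + H(x,Du^\ep) = \Delta u^\ep + g_\ep(m^\ep)$. Fix a terminal time $t_0 \in (0,T]$ and a point; introduce the adjoint density $\rho = \rho^{x_0,t_0}$ solving the backward Fokker-Planck equation $-\rho_s - \div(D_pH(x,Du^\ep)\,\rho) = \Delta\rho$ with $\rho(\cdot,t_0) = \delta_{x_0}$ (or a mollification thereof), so that $\rho \geq 0$ and $\int_{\Tt^d}\rho(\cdot,s)\,dx = 1$ for all $s$. Differentiating the HJ equation in $x_k$, multiplying by $\rho$, integrating over $\Tt^d\times(t,t_0)$ and integrating by parts in the adjoint variable kills the transport and Laplacian terms, and one is left with an identity of the schematic form $\frac{d}{ds}\int_{\Tt^d} u^\ep_{x_k}\rho\,dx = \int_{\Tt^d}(D_{x_k}H)\rho\,dx - \int_{\Tt^d}(g_\ep(m^\ep))_{x_k}\rho\,dx$; integrating back in $s$ and integrating by parts in $x$ on the last term to move the derivative off $g_\ep(m^\ep)$ onto $\rho$ gives a pointwise bound for $Du^\ep(x_0,t_0)$ in terms of $\int\int |D_xH|\rho$ and $\int\int |g_\ep(m^\ep)|\,|D_x\rho|$.

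The term $\int\int|D_xH|\rho$ is controlled by A\ref{strong} ($|D_xH| \leq CH + C$) together with a first-order estimate along $\rho$ — integrating the HJ equation itself against $\rho$ yields $\int\int H\rho \leq C + C\|u^\ep\|_{L^\infty} + C\|g_\ep(m^\ep)\|_{L^\infty_t L^p_x}$, using $\int\rho = 1$ and Hölder in the $g_\ep(m^\ep)$ term (this is where $p > d$ enters, via the heat-kernel/Aronson-type bound $\|\rho(\cdot,s)\|_{L^q} \leq C(t_0-s)^{-d/(2p)}$ with $1/p+1/q=1$, exactly as in the proof of Lemma \ref{imphc0}). The term $\int\int|g_\ep(m^\ep)|\,|D_x\rho|$ is handled by Hölder as $\|g_\ep(m^\ep)\|_{L^\infty_tL^p_x}\int_t^{t_0}\|D_x\rho(\cdot,s)\|_{L^q}\,ds$, and the integral $\int_t^{t_0}\|D_x\rho\|_{L^q}\,ds$ must be bounded by a negative power of the Hamiltonian bound that produces the weight $\|Du^\ep\|_{L^\infty}^\mu(1+\|u^\ep\|_{L^\infty})$; the key input here is A\ref{adph}, $|D_pH|^2 \leq C|p|^\mu H + C$, which after an energy/entropy estimate for the adjoint equation converts the drift bound $\|D_pH(\cdot,Du^\ep)\|_{L^\infty}$ into the factor $\|Du^\ep\|^{\mu}_{L^\infty}(1+\|u^\ep\|_{L^\infty})^{1/2}$-type growth (using $\int\int H\rho \leq C(1+\|u^\ep\|_{L^\infty}+\|g_\ep(m^\ep)\|)$ to absorb the $H$ factor), combined with A\ref{aspdpph} to control the second-order terms $D^2_{xp}H$, $D^2_{pp}H$ that appear when one differentiates the drift.

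Collecting the pieces gives a bound for $|Du^\ep(x_0,t_0)|$ uniform in $(x_0,t_0)$, hence for $\|Du^\ep\|_{L^\infty(0,T;L^\infty(\Tt^d))}$, of precisely the claimed form, with all constants independent of $\ep$ because every estimate invoked (Propositions \ref{pehm}, \ref{elsoe1}, the heat-kernel bounds, and the structural assumptions) is $\ep$-uniform. The main obstacle I expect is the second bullet-point of the argument: obtaining a \emph{quantitative} estimate on $\int_t^{t_0}\|D_x\rho(\cdot,s)\|_{L^q}\,ds$ for the adjoint equation with the (only $L^\infty$-in-space, possibly large) drift $-D_pH(x,Du^\ep)$, with the dependence on $\|Du^\ep\|_{L^\infty}$ appearing only to the power $\mu<1$ rather than to a power $\geq 1$. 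This is exactly the superquadratic difficulty: one cannot afford a linear dependence on the gradient, and the gain of the small power $\mu$ must come from carefully exploiting A\ref{adph} and A\ref{aspdpph} — the weighted parabolic estimates for $\rho$ — rather than from crude Grönwall-type bounds; getting the weight $\|Du^\ep\|^\mu(1+\|u^\ep\|)$ and not worse is the crux on which the whole fixed-point argument for Theorem \ref{teo:sec12} ultimately rests.
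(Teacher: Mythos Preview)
Your overall architecture is right --- adjoint method, differentiate the HJ equation in a direction $\xi$, pair with $\rho$, and integrate by parts so that the $(g_\ep(m^\ep))_\xi$ term becomes $-\int\!\!\int g_\ep(m^\ep)\,D_\xi\rho$. You also correctly identify the crux: one must extract $\|Du^\ep\|^\mu$ rather than $\|Du^\ep\|$ from the adjoint estimates. But the proposal leaves exactly that step unspecified, and the route you sketch (bound $\int_\tau^{t_0}\|D_x\rho(\cdot,s)\|_{L^q}\,ds$ via heat-kernel/Aronson bounds, with A\ref{aspdpph} controlling derivatives of the drift) will not deliver the exponent $\mu$.

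The missing idea, which is the content of the paper's Proposition \ref{pram1} and Step 4, is twofold. First, one does \emph{not} estimate $D\rho$ in $L^q$; instead one multiplies the adjoint equation by $\nu\rho^{\nu-1}$ for $0<\nu<1$ and obtains an energy bound for $D(\rho^{\nu/2})$ in $L^2_{t,x}$:
\[
\int_\tau^T\!\!\int_{\Tt^d}|D\rho^{\nu/2}|^2 \;\le\; C + C\|Du^\ep\|_{L^\infty}^{\mu}\bigl(1+\|u^\ep\|_{L^\infty}\bigr).
\]
The power $\mu$ appears here precisely because A\ref{adph} gives $|D_pH|^2\le C|Du|^\mu H + C$, and the factor $H$ is absorbed by the representation-formula bound $\int\!\!\int H\rho \le C+C\|u^\ep\|_{L^\infty}$ (Corollary \ref{cram12}, which uses only $g\ge 0$; no $\|g_\ep\|_{L^p}$ and no heat-kernel bound is needed at this stage). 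Assumption A\ref{aspdpph} plays no role: one never differentiates the drift, since the transport term cancels exactly against the adjoint. Second, to exploit the $D\rho^{\nu/2}$ bound one writes $|D\rho| = c_\nu\,\rho^{1-\nu/2}|D\rho^{\nu/2}|$ and applies a three-factor H\"older
\[
\Bigl|\int\!\!\int g_\ep(m)\,D_\xi\rho\Bigr| \;\le\; C\,\|g_\ep(m)\|_{L^\infty_t L^a_x}\,\|\rho^{1-\nu/2}\|_{L^2_t L^b_x}\,\|D\rho^{\nu/2}\|_{L^2_{t,x}},
\]
with $\tfrac1a+\tfrac1b+\tfrac12=1$. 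An interpolation of $\rho^{1-\nu/2}$ between $\|\rho\|_{L^1}=1$ and $\|\rho^{\nu/2}\|_{L^{2^*}}$ (Sobolev) bounds the middle factor by the same quantity $\bigl(C+\int\!\!\int|D\rho^{\nu/2}|^2\bigr)^{1/2}$; choosing $\nu$ close to $1$ forces $a$ just above $d$, and \emph{this} is where $p>d$ enters. Without the $\rho^{\nu/2}$ trick the $\mu$-power is lost and the fixed-point closure in Section \ref{sec:sec12} fails.
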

\begin{proof}
For convenience, the proof of the Theorem proceeds in the four steps below.
\end{proof}
We omit the superscript $\ep$ for the solution  $(u^\ep,m^\ep)$ in the following proofs.

\paragraph{Step 1}

The adjoint equation is the following partial differential equation
\begin{equation}
\label{ADJ2}
\rho_t-\Delta \rho -\div(D_pH \rho)=0
\end{equation}
for which we choose the initial data
$\rho(\cdot,\tau)=\delta_{x_0}$.
Using this and the first equation in \eqref{eq:smfg}, we have the following representation formula for $u$:  
\begin{equation}
\label{7A2}
u(x_0,\tau)=\int_\tau^T\int_{\Tt^d}(D_pH D_xu-H+g_\ep(m))\rho+
\int_{\Tt^d}u(x,T) \rho(x,T). 
\end{equation}
\begin{Corollary}
\label{cram11}
Suppose  A\ref{ah}-A\ref{aspdpph} hold.
Let $(u^\epsilon,m^\ep)$ be a solution of \eqref{eq:smfg}. Let $\rho$ solve
\eqref{ADJ2} with initial data $\rho(\cdot,\tau)=\delta_{x_0}$.  Then 
\begin{equation}
\label{7B1}
\int_\tau^T\int_{\Tt^d}H\rho+\int_\tau^T\int_{\Tt^d} g_\ep(m)\rho
\le C+C\left[u(x_0,\tau)-\int_{\Tt^d} u(x,T)\rho(x,T)\right]. 
\end{equation}
\end{Corollary}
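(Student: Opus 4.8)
The plan is to read off \eqref{7B1} almost directly from the representation formula \eqref{7A2}, using the definition \eqref{ele} of $\hat L$, Assumption A\ref{aele}, the nonnegativity of $g_\ep(m)$, and the elementary fact that $\rho(\cdot,t)$ is a probability measure for each $t\in[\tau,T]$. First I would move the terminal term in \eqref{7A2} to the left-hand side, so that
\[
u(x_0,\tau)-\int_{\Tt^d}u(x,T)\rho(x,T)=\int_\tau^T\int_{\Tt^d}\bigl(D_pH\,D_xu-H+g_\ep(m)\bigr)\rho.
\]
By \eqref{ele}, the first two terms of the integrand combine as $D_pH(x,D_xu)\cdot D_xu-H(x,D_xu)=\hat L(x,D_xu)$, and Assumption A\ref{aele} gives the pointwise bound $\hat L(x,D_xu)\ge cH(x,D_xu)-C$. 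Hence
\[
u(x_0,\tau)-\int_{\Tt^d}u(x,T)\rho(x,T)\ge\int_\tau^T\int_{\Tt^d}\bigl(cH-C+g_\ep(m)\bigr)\rho.
\]

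Next I would use that $\rho\ge 0$ (maximum principle for \eqref{ADJ2}) and that $\int_{\Tt^d}\rho(x,t)\,dx=1$ for every $t\in[\tau,T]$; the latter follows by integrating \eqref{ADJ2} over $\Tt^d$, where the Laplacian and divergence terms vanish by periodicity, so mass is conserved from $\rho(\cdot,\tau)=\delta_{x_0}$. Consequently $\int_\tau^T\int_{\Tt^d}C\rho=C(T-\tau)\le CT$, and since $g_\ep(m)\ge 0$, rearranging the previous display yields
\[
c\int_\tau^T\int_{\Tt^d}H\rho+\int_\tau^T\int_{\Tt^d}g_\ep(m)\rho\le CT+u(x_0,\tau)-\int_{\Tt^d}u(x,T)\rho(x,T).
\]
Dividing by $\min(c,1)$ and enlarging $C$ accordingly (the factor $1/\min(c,1)\ge 1$ only increases the right-hand side, and $g_\ep(m)\ge 0$ allows replacing the coefficient $c$ on the left by $1$) gives exactly \eqref{7B1}.

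I do not expect a genuine obstacle here: the Corollary is essentially an algebraic manipulation of \eqref{7A2} combined with A\ref{aele}. The only points meriting (routine) care are the verification that $\rho(\cdot,t)$ is a probability measure — which is what lets one absorb the additive constant $-C$ coming from A\ref{aele} into $CT$ — and the bookkeeping of the constant $c$ from A\ref{aele} in the case $c<1$.
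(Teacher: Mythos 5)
Your argument is exactly the paper's one-line proof (``use Assumption A\ref{aele} in \eqref{7A2}''), spelled out in detail: identify the integrand $D_pH\,D_xu-H$ with $\hat L$ via \eqref{ele}, invoke A\ref{aele}, use that $\rho(\cdot,t)$ is a probability measure to absorb the additive $-C$, and keep $g_\ep(m)\ge 0$. The bookkeeping of the constant $c$ is handled correctly, so the proposal is correct and takes the same route as the paper.
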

\begin{proof}
It suffices to use Assumption A\ref{aele} in \eqref{7A2}.
\end{proof}

\begin{Corollary}
\label{cram12}
Suppose  A\ref{ah}-A\ref{aspdpph} hold.
Let $(u^\epsilon,m^\ep)$ be a solution of \eqref{eq:smfg}. Let $\rho$ solve
\eqref{ADJ2} with initial data $\rho(\cdot,\tau)=\delta_{x_0}$.  Then 
\begin{equation}
\label{7B2}
\int_\tau^T\int_{\Tt^d}H\rho\le C+C\|u^\ep\|_{L^{\infty}(0,T;L^{\infty}(\Tt^d))}
\end{equation}
\end{Corollary}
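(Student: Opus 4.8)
The plan is to deduce this bound from Corollary \ref{cram11} by controlling the two nuisance terms on the right-hand side of \eqref{7B1}, namely $u(x_0,\tau)$ and $-\int_{\Tt^d} u(x,T)\rho(x,T)\,dx$. First, since $g_\epsilon(m)\geq 0$ (Assumption A\ref{ag}, which is preserved by the mollification \eqref{gep}), the term $\int_\tau^T\int_{\Tt^d} g_\epsilon(m)\rho$ in \eqref{7B1} is non-negative, so it can simply be dropped, leaving
\[
\int_\tau^T\int_{\Tt^d}H\rho \le C + C\left[u(x_0,\tau) - \int_{\Tt^d} u(x,T)\rho(x,T)\,dx\right].
\]

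Next I would bound the bracketed quantity by $2\|u\|_{L^\infty(0,T;L^\infty(\Tt^d))}$. This is immediate: $u(x_0,\tau) \le \|u\|_{L^\infty(\Tt^d\times[0,T])}$, and since $\rho(\cdot,T)$ is a probability density on $\Tt^d$ (it solves the Fokker--Planck-type equation \eqref{ADJ2} with initial datum the unit mass $\delta_{x_0}$, and mass is conserved with $\rho\ge 0$), we have $-\int_{\Tt^d} u(x,T)\rho(x,T)\,dx \le \|u(\cdot,T)\|_{L^\infty(\Tt^d)} \le \|u\|_{L^\infty(\Tt^d\times[0,T])}$. Combining these two bounds with the displayed inequality yields
\[
\int_\tau^T\int_{\Tt^d}H\rho \le C + 2C\|u^\epsilon\|_{L^\infty(0,T;L^\infty(\Tt^d))},
\]
which is \eqref{7B2} after relabeling the constant. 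I should note that the lower bound $u^\epsilon \ge -C$ (from $g_\epsilon \ge 0$, as recalled in Section \ref{pub}) guarantees $\|u^\epsilon\|_{L^\infty}$ is already finite, so the statement is not vacuous.

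There is essentially no obstacle here; the only point requiring a word of care is the conservation of mass and non-negativity of $\rho$, which justify treating $\rho(\cdot,T)$ as a probability measure against which $u$ is integrated. This follows from the maximum principle and integration of \eqref{ADJ2} over $\Tt^d$ (the divergence and Laplacian terms integrate to zero on the torus), exactly as for the Fokker--Planck equation; one may also invoke the duality with \eqref{eq:smfg} already used to derive the representation formula \eqref{7A2}. With that in hand the corollary is an immediate consequence of Corollary \ref{cram11}.
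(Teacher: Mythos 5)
Your proof is correct and follows exactly the route the paper intends: drop the non-negative term $\int_\tau^T\int_{\Tt^d} g_\ep(m)\rho$ from \eqref{7B1} using $g_\ep\ge 0$, then bound the bracket by $2\|u^\ep\|_{L^\infty}$ using that $\rho(\cdot,T)$ is a probability density. The paper's proof is a one-liner citing Corollary \ref{cram11} and positivity of $g$; you have simply spelled out the same argument with the mass-conservation justification made explicit.
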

\begin{proof}
The result follows from Corollary \ref{cram11} and the positivity of $g$.
\end{proof}

\paragraph{Step 2}

We have, using the ideas from \cite{GM}:
\begin{Proposition}
\label{pram1}
Suppose  A\ref{ah}-A\ref{aspdpph} hold.
Let $(u^\ep,m^\ep)$ be a solution of \eqref{eq:smfg}. 
Let $\rho$ solve \eqref{ADJ2} with initial data $\rho(\cdot,\tau)=\delta_{x_0}$. 
Then, 
for $0<\nu<1$
\[
\int_\tau^T\int_{\Tt^d}|D\rho^{\nu/2}|^2dx\,dt\le C+C\|Du^\ep\|^{\mu}_{L^{\infty}(0,T;L^{\infty}(\Tt^d))}\Big(1+\|u^\ep\|_{L^{\infty}(0,T;L^{\infty}(\Tt^d))}\Big),
\]where $\mu$ is the exponent given in Assumption A\ref{adph}.
\end{Proposition}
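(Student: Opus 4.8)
The plan is to run the adjoint argument of \cite{GM}, differentiating $t\mapsto\int_{\Tt^d}\rho^\nu\,dx$ along the adjoint equation \eqref{ADJ2}. First note that integrating \eqref{ADJ2} over $\Tt^d$ shows mass is conserved, so $\int_{\Tt^d}\rho(\cdot,t)\,dx=1$ and $\rho\ge0$ for $t\ge\tau$; since $0<\nu<1$ and $|\Tt^d|=1$, H\"older's inequality yields $\int_{\Tt^d}\rho^\nu(\cdot,t)\,dx\le1$ for every $t\ge\tau$. Using $\rho_t=\Delta\rho+\div(D_pH\,\rho)$, integrating by parts, and the identities $\rho^{\nu-2}|D\rho|^2=\tfrac{4}{\nu^2}|D\rho^{\nu/2}|^2$ and $\rho^{\nu-1}D\rho=\tfrac2\nu\rho^{\nu/2}D\rho^{\nu/2}$, one gets, for $t>\tau$,
\[
\frac{d}{dt}\int_{\Tt^d}\rho^\nu\,dx
=\frac{4(1-\nu)}{\nu}\int_{\Tt^d}|D\rho^{\nu/2}|^2\,dx
+2(1-\nu)\int_{\Tt^d}\rho^{\nu/2}\,D\rho^{\nu/2}\cdot D_pH\,dx.
\]
By Cauchy--Schwarz and Young, $2(1-\nu)\,|\rho^{\nu/2}D\rho^{\nu/2}\cdot D_pH|\le\delta|D\rho^{\nu/2}|^2+C_\delta\,\rho^\nu|D_pH|^2$; choosing $\delta$ small so the $|D\rho^{\nu/2}|^2$ term is absorbed, integrating over $[\tau+\sigma,T]$, and using $\int_{\Tt^d}\rho^\nu(\cdot,T)\le1$ together with $\int_{\Tt^d}\rho^\nu(\cdot,\tau+\sigma)\ge0$, one obtains, uniformly in $\sigma>0$,
\[
\int_{\tau+\sigma}^T\int_{\Tt^d}|D\rho^{\nu/2}|^2\,dx\,dt\le C+C\int_\tau^T\int_{\Tt^d}\rho^\nu|D_pH|^2\,dx\,dt,
\]
and letting $\sigma\to0$ (monotone convergence, which also shows the left-hand side finite since the right-hand side is a priori finite for the smooth regularized solution) gives the same bound on all of $(\tau,T)$.

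It then remains to estimate $\int_\tau^T\int_{\Tt^d}\rho^\nu|D_pH|^2$. Assumption A\ref{adph} gives $|D_pH|^2\le C|Du^\ep|^\mu H+C\le C\|Du^\ep\|_{L^\infty(0,T;L^\infty(\Tt^d))}^\mu\,H+C$, so, using $\int_{\Tt^d}\rho^\nu\le1$,
\[
\int_\tau^T\int_{\Tt^d}\rho^\nu|D_pH|^2\le C\|Du^\ep\|_{L^\infty(0,T;L^\infty(\Tt^d))}^\mu\int_\tau^T\int_{\Tt^d}\rho^\nu H+C.
\]
Young's inequality gives the pointwise bound $\rho^\nu\le\rho+1$, so $\int_\tau^T\int_{\Tt^d}\rho^\nu H\le\int_\tau^T\int_{\Tt^d}\rho H+\int_\tau^T\int_{\Tt^d}H$. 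The first term is $\le C+C\|u^\ep\|_{L^\infty(0,T;L^\infty(\Tt^d))}$ by Corollary \ref{cram12}. For the second, integrating the Hamilton--Jacobi equation in \eqref{eq:smfg} against $1$ over $\Tt^d\times(\tau,T)$ kills the Laplacian term and yields $\int_\tau^T\int_{\Tt^d}H=\int_{\Tt^d}\big(u^\ep(\cdot,T)-u^\ep(\cdot,\tau)\big)\,dx+\int_\tau^T\int_{\Tt^d}g_\ep(m^\ep)$; since $\int_{\Tt^d}g_\ep(m^\ep)=\int_{\Tt^d}(\eta_\ep*m^\ep)^\alpha$ is time-integrable by Corollary \ref{mhjr}, this gives $\int_\tau^T\int_{\Tt^d}H\le C+C\|u^\ep\|_{L^\infty(0,T;L^\infty(\Tt^d))}$. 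Chaining the three estimates produces exactly the asserted bound.

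I expect the only genuine subtlety to be keeping the exponent of $\|Du^\ep\|_{L^\infty(0,T;L^\infty(\Tt^d))}$ equal to $\mu$: replacing $H$ by the crude pointwise bound $C|Du^\ep|^{2+\mu}+C$ from Assumption A\ref{superq} would degrade it to $\|Du^\ep\|_\infty^{2+2\mu}$, which is useless for the fixed-point/bootstrap argument of Section \ref{sec:sec12}. Keeping $H$ inside the weighted integrals and appealing to Corollary \ref{cram12} instead of an $L^\infty$ bound on $H$ is what avoids this; the use of the weight $\rho^\nu$ with $\nu<1$ (rather than $\rho$) is precisely what lets one simultaneously dominate $\int|D\rho^{\nu/2}|^2$ and reduce $\int\rho^\nu H$ to the already controlled quantities $\int\rho H$ and $\int H$. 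A secondary, purely technical point is the non-integrable singularity of $\rho^\nu$ near $t=\tau$, dealt with by the truncation $[\tau+\sigma,T]$ and the limit $\sigma\to0$ above.
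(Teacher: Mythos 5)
Your proof is correct and follows essentially the same strategy as the paper: multiply the adjoint equation by $\nu\rho^{\nu-1}$ (equivalently, differentiate $t\mapsto\int\rho^\nu$), integrate by parts, absorb the cross term by Young's inequality with weight $\zeta$, invoke $\rho^\nu\le 1+\rho$, and then use A\ref{adph} together with Corollary \ref{cram12} to close the estimate with the correct exponent $\mu$ on $\|Du^\ep\|_{L^\infty}$. Two details you supply explicitly that the paper glosses over are in fact the right way to make the argument airtight: the time truncation $[\tau+\sigma,T]$ with $\sigma\to 0$, which handles the non-integrability of $\rho^\nu$ at the Dirac initial time, and the bound on $\int_\tau^T\int_{\Tt^d}H$ obtained by integrating the Hamilton--Jacobi equation over $\Tt^d\times(\tau,T)$ and using Corollary \ref{mhjr} to control $\int_\tau^T\int g_\ep(m^\ep)$ (the paper points to Proposition \ref{pehm}, which directly controls $\int Hm$ and $\int G$, not $\int H$; your route via the equation plus the $L^{\alpha+1}$-in-time bound on $\eta_\ep*m^\ep$ is the cleaner way to justify that step). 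Your closing remark on why the weight $\rho^\nu$ with $\nu<1$ is used and why keeping $H$ inside the $\rho$-weighted integral rather than bounding it in $L^\infty$ is essential for the bootstrap exponent is exactly the point.
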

\begin{proof}
Multiply \eqref{ADJ2} by $\nu\rho^{\nu-1}$. Then
\begin{equation}
\label{blabla1}
\frac{\partial\rho^\nu}{\partial t}-\nu\rho^{\nu-1}\div(D_pH(x,Du)\rho)=
\nu\rho^{\nu-1}\Delta \rho. 
\end{equation}
We now integrate the previous identity on $[\tau,T]\times \Tt^d$.
Since $\rho(\cdot, t)$ is a probability measure and we have $0<\nu<1$, it follows that:
$\int_{\Tt^d} \rho^\nu(x,t) dx\leq 1$.
Consequently, the integral of the first term of the left hand side of \eqref{blabla1}
is bounded. We also have:
\begin{align*}
&\left|\int_\tau^T \int_{\Tt^d}\nu\rho^{\nu-1}\div(D_pH(x,Du)\rho) dx dt\right|\\
&=c_{\nu}\left|\int_\tau^T\int_{\Tt^d} \rho^{\nu/2} \rho^{\nu/2-1}D\rho D_p H dx dt\right|\\ 
&\le\zeta\int_\tau^T\int_{\Tt^d}|D(\rho^{\nu/2})|^2dxdt+
C_{\zeta,\nu}\int_\tau^T\int_{\Tt^d} |D_pH|^2 \rho^{\nu}dx dt,  
\end{align*}
for any $\zeta>0$, with $C_{\zeta,\nu}$ depending only on $\zeta$ and $\nu$. 
Because $0<\nu<1$, we have $\rho^\nu \leq C_\delta+\delta \rho,$ for any $\delta>0$ and suitable $C_\delta$. 
Using Assumption A\ref{adph}, it follows from Proposition \ref{pehm} and Corollary \ref{cram12} that
\begin{align*}
 C\int_\tau^T\int_{\Tt^d} |D_pH|^2\rho^{\nu} dx\,dt&\le C
+C_\delta \int_\tau^T\int_{\Tt^d} |Du|^\mu H dx dt+\delta\int_\tau^T\int_{\Tt^d} |Du|^\mu H \rho dx dt\\
&\le C+C\|Du\|^{\mu}_{L^{\infty}(0,T;L^{\infty}(\Tt^d))}\Big(1+\|u\|_{L^{\infty}(0,T;L^{\infty}(\Tt^d))}\Big).
\end{align*}

The integral of the right hand side of \eqref{blabla1} is
\[
\nu (1-\nu)\int_\tau^T\int_{\Tt^d}|D\rho|^2\rho^{\nu-2} dx dt=
\frac{4(1-\nu)}{\nu}\int_\tau^T\int_{\Tt^d}|D(\rho^{\nu/2})|^2 dx dt. 
\] 

Gathering the previous estimates we get 
\begin{align*}
\frac{4 (1-\nu)}{\nu} &\int_\tau^T\int_{\Tt^d}|D (\rho^{\nu/2})|^2 dx dt
\le C+\zeta\int_\tau^T \int_{\Tt^d}|D(\rho^{\nu/2})|^2 dx dt\\
&+C\|Du\|^{\mu}_{L^{\infty}(0,T;L^{\infty}(\Tt^d))}(1+\|u\|_{L^{\infty}(0,T;L^{\infty}(\Tt^d))}).
\end{align*}
Choosing $\zeta$ small enough we obtain the result. 
\end{proof}

\paragraph{Step 3}

To finish the proof of Theorem \ref{uReg1} fix now a unit vector $\xi\in\Rr^d$.
Differentiate the first equation of \eqref{eq:smfg} in the $\xi$
direction and multiply it by $\rho$.
Integrating by parts and using \eqref{ADJ2} we obtain:
 \[
u_\xi(x_0,\tau)=\int_\tau^T\int_{\Tt^d}-D_\xi H\rho+(g_\ep(m))_\xi\rho
+\int_{\Tt^d}u_\xi(x,T)\rho(x,T). 
\] 
Note that
\[
\left|\int_{\Tt^d} u_\xi(x,T) \rho(x,T)\right|\le \|u_\xi(\cdot,T)\|_{L^{\infty}(\Tt^d)}.
\]
Using Corollary \ref{cram12} and  Assumption A\ref{strong} we have
\[
\int_\tau^T\int_{\Tt^d}|D_\xi H|\rho\leq C+C\int_\tau^T\int_{\Tt^d}H\rho\leq C+C\|u\|_{L^\infty(0,T;L^{\infty}(\Tt^d))}. 
\]
Thus it remains to bound
\begin{equation}
\label{rme0}
\int_\tau^T\int_{\Tt^d} (g_\ep(m))_\xi\rho.
\end{equation}
This will be done in the next step. 

\paragraph{Step 4}

To bound \eqref{rme0} we integrate by parts, from which 
it follows that:
\begin{align*}
\left|\int_\tau^T\int_{\Tt^d}(g_\ep(m))_\xi\rho\right|\leq 
&\int_\tau^T\int_{\Tt^d} g_\ep(m)\rho^{1-\be} |\rho^{\be-1}D\rho|\\
&\leq C\int_\tau^T \|g_\ep(m)\|_a\|\rho^{1-\be}\|_b\|D\rho^\be\|_2,
\end{align*}
for any $2\leq a, b\leq \infty$ satisfying
$\frac 1 a +\frac 1 b+ \frac 1 2 =1$. 
From this we get, for $\be=\frac \nu 2$, with $0<\nu<1$, 
\begin{align*}
&\left|\int_\tau^T \int_{\Tt^d} g_\ep(m)_\xi \rho\right| \leq 
C\|g(m)\|_{L^\infty(\tau,T;L^{a}(\Tt^d))}
\|\rho^{1-\frac \nu 2}\|_{L^2(\tau,T;L^b(\Tt^d))}
\|D\rho^{\frac \nu 2}\|_{L^2(\tau,T;L^2(\Tt^d))}.
\end{align*}
From Proposition \ref{pram1} we have a bound for
$\|D\rho^{\frac\nu 2}\|_{L^2(\tau,T;L^2(\Tt^d))}$. Therefore, it suffices to estimate $\|\rho^{1-\frac \nu 2}\|_{L^2(\tau,T;L^b(\Tt^d))}.$ We have now to estimate
\[
\int_\tau^T\Big(\int_{\Tt^d} \rho^{b( 1-\frac \nu 2)}\Big)^\frac{2}{b}.
\]
Given $0<\kappa<1$, we define $b$ by
\begin{equation}\label{eq:k1}
\frac{1}{b( 1-\frac \nu 2)}=1-\kappa+\frac{\kappa}{\frac{2^*\nu}{2}}. 
\end{equation}
We will choose $\kappa$ appropriately so that $b>2$ holds. 
Additionally, it follows trivially from \eqref{eq:k1} that 
$1<b (1-\frac \nu 2) < \frac{2^*}2 \nu$,  
and so by H\"older's inequality 
we have:
\[
\Big(\int_{\Tt^d} \rho^{b( 1-\frac \nu 2)}\Big)^\frac{1}{b( 1-\frac \nu 2)}
\leq 
\Big(\int_{\Tt^d} \rho\Big)^{1-\kappa}
\Big(\int_{\Tt^d} \rho^{\frac{2^*\nu}{2}}\Big)^{\frac{2\kappa}{2^* \nu}}. 
\]
Recall that by Sobolev's inequality,
$\Big(\int_{\Tt^d} \rho^{\frac{2^*\nu}{2}}\Big)^{\frac{2}{2^*}}\leq C+C \int_{\Tt^d} |D\rho^{\frac \nu 2}|^2$.
Choose now $\kappa=\frac{\nu}{2-\nu}.$
Note that if $0<\nu<1$ we have $0<\kappa<1$. 
Then 
\begin{align*}
&\Big\|\rho^{1-\frac{\nu}{2}}\Big\|_{L^2(0,T;L^b(\Tt^d))}\leq   \left[C+C \int_0^T\int_{\Tt^d} |D\rho^{\frac \nu 2}|^2\right]^{\frac{1}{2}} \\
&\leq \left[C+C\|Du\|^\mu_{L^{\infty}(0,T;L^{\infty}(\Tt^d))}\Big(1+\|u\|_{L^{\infty}(0,T;L^{\infty}(\Tt^d))}\Big)\right]^{\frac{1}{2}}.
\end{align*}
Also, using Proposition \ref{pram1} we have
\[
\|D\rho^{\frac{\nu}{2}}\|_{L^2(0,T;L^2(\Tt^d))}\leq
\left[C+C\|Du\|^\mu_{L^{\infty}(0,T;L^{\infty}(\Tt^d))}\Big(1+\|u\|_{L^{\infty}(0,T;L^{\infty}(\Tt^d))}\Big)\right]^{\frac{1}{2}}.
\]

It remains to check that it is possible to choose $\nu$ such that $b>2$. Indeed, for $\frac{d-1}d<\nu<1$ we have $\frac{d-1}{d+1}<\kappa<1$, and  $b=\frac{2d}{3d-2d\nu-2}>2.$ Note that $a$ is given by $a=\frac d{d(\nu-1)+1}.$
Thus if $p>d$ we have, for $\nu$ close enough to $1$ that $p>a$ and, therefore, 
this ends the proof of Theorem \ref{uReg1}.

The result in Theorem \ref{uReg1} can be further simplified, as stated in Theorem \ref{uReg1c}. We now present its proof.

\begin{proof}[Proof of Theorem \ref{uReg1c}.]
By recurring to Lemma \ref{imphc0}, Theorem \ref{uReg1} becomes
\begin{align*}
\|Du\|_{L^\infty(\Tt^d\times[0,T)}&\leq C+C\|g_\ep\|_{L^\infty(0,T;L^p(\Tt^d))}\\&\quad+
C\|g_\ep\|_{L^\infty(0,T;L^p(\Tt^d))}\|Du\|^\mu_{L^\infty(\Tt^d\times[0,T])}\\
&\quad+C\|g_\ep\|_{L^\infty(0,T;L^p(\Tt^d))}\|Du\|^{\mu}_{L^\infty(\Tt^d\times[0,T])}\|u^\ep\|_{L^\infty(\Tt^d\times[0,T])}.
\end{align*}Young's inequality yields then
\begin{align*}
\|Du\|_{L^\infty(\Tt^d\times[0,T])}&\leq C+C\|g_\ep\|_{L^\infty(0,T;L^p(\Tt^d))}+C\|g_\ep\|_{L^\infty(0,T;L^p(\Tt^d))}^\frac{1}{1-\mu}\\&\quad+C\|g_\ep\|_{L^\infty(0,T;L^p(\Tt^d))}^\frac{1}{1-\mu}\|u^\ep\|_{L^\infty(\Tt^d\times[0,T])}^\frac{1}{1-\mu}.
\end{align*}
A further application of Young's inequality implies the result.
\end{proof}

\section{Lipschitz regularity for the Hamilton-Jacobi equation}
\label{sec:sec12}

In what follows we combine the results of Section \ref{igain} with the arguments from \ref{ram} to obtain Lipschitz regularity for the Hamilton-Jacobi equation.
\begin{Lemma}\label{lem:lem1sec12}
Let $(u^\epsilon,m^\epsilon)$ be a solution of \eqref{eq:smfg}-\eqref{itvp}. Suppose that
A\ref{ah}-\ref{alpha3} hold. 
Let $\theta,\tilde{\te},>1$, $0\leq \upsilon,
\tilde{\upsilon},\leq 1$. Let $r=r_\te$, $\tilde{r}=r_{\tilde{\te}}$
be given by \eqref{eq:eq3sec84} and $p_{\upsilon,\te}$,
$p_{\tilde{\upsilon},\tilde{\te}}$ be given by 
\eqref{eq:eq4sec84}. Suppose that $p_{\upsilon,\te}>d$, $p_{\tilde{\upsilon},\tilde{\te}}>\frac{d}{2}$. 
Then
\[
\left\|Du^\epsilon\right\|_{L^\infty(0,T;L^\infty(\Tt^d))}\leq C+C\left\|Du^\epsilon\right\|_{L^\infty(0,T;L^\infty(\Tt^d))}^{\frac{(2+2\mu)r\upsilon\af}{(1-\mu)\theta\be_0}+\frac{(2+2\mu)\tilde{r}\tilde{\upsilon}\af}{(1-\mu)\tilde{\theta}\be_0}}.
\]
\end{Lemma}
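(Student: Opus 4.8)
The plan is to feed the two main estimates from the previous sections into each other and make all exponents explicit. The key observation is that Theorem \ref{cor:cor2sec84} produces an $L^p$ bound on $g_\ep(m^\ep)$ governed by a \emph{product} of two data exponents $(\upsilon,\te)$, while Lemma \ref{imphc0} and Theorem \ref{uReg1c} control $\|u^\ep\|_{L^\infty}$ and $\|Du^\ep\|_{L^\infty}$ respectively, each in terms of $\|g_\ep(m^\ep)\|_{L^\infty(0,T;L^p(\Tt^d))}$ --- but the latter needs $p>d$ and the former needs $p>d/2$. Since the hypotheses guarantee $p_{\upsilon,\te}>d$ and $p_{\tilde\upsilon,\tilde\te}>d/2$, I would apply Theorem \ref{cor:cor2sec84} twice: once with parameters $(\upsilon,\te)$ to bound $\|g_\ep(m^\ep)\|_{L^\infty(0,T;L^{p_{\upsilon,\te}}(\Tt^d))}$, and once with $(\tilde\upsilon,\tilde\te)$ to bound $\|g_\ep(m^\ep)\|_{L^\infty(0,T;L^{p_{\tilde\upsilon,\tilde\te}}(\Tt^d))}$.

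First I would substitute the second of these (with $p_{\tilde\upsilon,\tilde\te}>d/2$) into Lemma \ref{imphc0}, obtaining
\[
\|u^\ep\|_{L^\infty(\Tt^d\times[0,T])}\le C+C\|Du^\ep\|_{L^\infty(0,T;L^\infty(\Tt^d))}^{\frac{(2+2\mu)\tilde r\tilde\upsilon\af}{\tilde\theta\be_0}}.
\]
Next I would substitute the first bound (with $p_{\upsilon,\te}>d$) into Theorem \ref{uReg1c}, which gives $\|Du^\ep\|_{L^\infty}$ controlled by $\|g_\ep(m^\ep)\|_{L^p}^{1/(1-\mu)}$ plus the same quantity times $\|u^\ep\|_{L^\infty}^{1/(1-\mu)}$. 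Plugging the $u^\ep$ estimate just derived into this, and then inserting the explicit bound $\|g_\ep(m^\ep)\|_{L^{p_{\upsilon,\te}}}\le C+C\|Du^\ep\|_{L^\infty}^{(2+2\mu)r\upsilon\af/(\theta\be_0)}$, produces a self-referential inequality of the form $\|Du^\ep\|_{L^\infty}\le C+C\|Du^\ep\|_{L^\infty}^{a}+C\|Du^\ep\|_{L^\infty}^{b}$ with $a=\frac{(2+2\mu)r\upsilon\af}{(1-\mu)\theta\be_0}$ and $b=\frac{(2+2\mu)r\upsilon\af}{(1-\mu)\theta\be_0}+\frac{(2+2\mu)\tilde r\tilde\upsilon\af}{(1-\mu)\tilde\theta\be_0}$. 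Since $b\ge a$, a final Young inequality absorbing the lower-order power $\|Du^\ep\|^a$ into the constant plus an $\varepsilon$-fraction of $\|Du^\ep\|^b$ collapses the right-hand side to the single term with exponent $b$, which is exactly the claimed exponent.

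The bookkeeping obstacle --- and the only place requiring care --- is tracking how the exponent $1/(1-\mu)$ from Theorem \ref{uReg1c} composes with the power $(2+2\mu)r\upsilon\af/(\theta\be_0)$ coming out of Theorem \ref{cor:cor2sec84} and with the analogous power for the $u^\ep$ term, so that the product lands precisely on $\frac{(2+2\mu)r\upsilon\af}{(1-\mu)\theta\be_0}+\frac{(2+2\mu)\tilde r\tilde\upsilon\af}{(1-\mu)\tilde\theta\be_0}$ and no spurious cross terms of higher order appear. Here the $\frac{1}{1-\mu}$ factor multiplies the $\te$-branch exponent once (not the $\tilde\te$-branch exponent, which enters only through $\|u^\ep\|_{L^\infty}$ and is \emph{already} at its final size after the Theorem \ref{uReg1c} multiplication), so the two contributions add rather than multiply, and all remaining terms are genuinely lower order and disappear under Young's inequality. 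Everything else is routine substitution, so I would present the argument as: (i) two applications of Theorem \ref{cor:cor2sec84}; (ii) one application of Lemma \ref{imphc0}; (iii) one application of Theorem \ref{uReg1c}; (iv) Young's inequality to consolidate the powers.
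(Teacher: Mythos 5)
Your proof plan is correct and is essentially identical to the paper's: it applies Theorem \ref{cor:cor2sec84} with both parameter choices, feeds the $\tilde p$-branch into Lemma \ref{imphc0} and the $p$-branch into Theorem \ref{uReg1c}, and consolidates the resulting powers via Young's inequality, arriving at the stated exponent $\frac{(2+2\mu)r\upsilon\af}{(1-\mu)\theta\be_0}+\frac{(2+2\mu)\tilde r\tilde\upsilon\af}{(1-\mu)\tilde\theta\be_0}$. One small clarity remark: your parenthetical that the $\frac{1}{1-\mu}$ factor does ``not'' hit the $\tilde\te$-branch exponent is misleading as stated --- that branch enters through $\|u^\ep\|_{L^\infty}^{1/(1-\mu)}$ and therefore \emph{is} scaled by $\frac{1}{1-\mu}$, exactly once, which is why $(1-\mu)$ sits in the denominator of the second summand in your (correct) final formula.
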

\begin{proof}
For ease of presentation, we remove the $\epsilon$. 
Theorem \ref{uReg1c} implies
\begin{align*}
\|Du\|_{L^\infty(0,T;L^\infty(\Tt^d))}\leq &
C+C\|g(m)\|^\frac{1}{1-\mu}_{L^\infty(0,T;L^p(\Tt^d))}\\
&+C\|g(m)\|^\frac{1}{1-\mu}_{L^\infty(0,T;L^p(\Tt^d))}\|u\|_{L^\infty(0,T;L^\infty(\Tt^d))}^\frac{1}{1-\mu}.
\end{align*}
Because $\tilde{p}>\frac{d}{2}$, we have from Lemma \ref{imphc0} that
$$\left\|u\right\|_{L^\infty(0,T;L^\infty(\Tt^d))}\leq
C+C\left\|g(m)\right\|_{L^\infty(0,T;L^{\tilde{p}}(\Tt^d))}.$$ 
By combining these, one obtains
\begin{align*}
\left\|Du\right\|_{L^\infty(0,T;L^\infty(\Tt^d))}&\leq
C+C\left\|g(m)\right\|^\frac{1}{1-\mu}_{L^\infty(0,T;L^p(\Tt^d))}\\
&\quad+C\left\|g(m)\right\|^\frac{1}{1-\mu}_{L^\infty(0,T;L^p(\Tt^d))}
\left\|g(m)\right\|^\frac{1}{1-\mu}_{L^\infty(0,T;L^{\tilde{p}}(\Tt^d))}.
\end{align*}
From Theorem \ref{cor:cor2sec84} it follows that 
\begin{align*}
\left\|Du\right\|_{L^\infty(0,T;L^\infty(\Tt^d))}&\leq C+C\left\|Du\right\|^{\frac{(2+2\mu)r\upsilon\af}{(1-\mu)\theta\be_0}+\frac{(2+2\mu)\tilde{r}\tilde{\upsilon}\af}{(1-\mu)\tilde{\theta}\be_0}}_{L^\infty(0,T;L^\infty(\Tt^d))},
\end{align*}which establishes the result.
\end{proof}

\begin{Proposition}\label{teo:teo1sec12}
Let $(u^\epsilon,m^\epsilon)$ be a solution of \eqref{eq:smfg}-\eqref{itvp}. Assume that
A\ref{ah}-\ref{alpha3} hold. 
Let $\theta,\tilde{\te}>1$, $0\leq \upsilon,
\tilde{\upsilon},\leq 1$. Let $r=r_\te$, $\tilde{r}=r_{\tilde{\te}}$ be given by 
\eqref{eq:eq3sec84} and $p_{\upsilon,\te}$, $p_{\tilde{\upsilon},\tilde{\te}}$
be given by \eqref{eq:eq4sec84}. Suppose $p_{\upsilon,\te}>d$ and
$p_{\tilde{\upsilon},\tilde{\te}}>\frac{d}{2},$ and there is
$n\in\mathbb{N}$ such that \eqref{eq:eq1sec12} is satisfied, then
$Du^\epsilon\in L^\infty(\Tt^d\times\left[0,T\right])$. 
\end{Proposition}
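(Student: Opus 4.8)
The plan is to read off Proposition \ref{teo:teo1sec12} from Lemma \ref{lem:lem1sec12} by an absorption (``feedback'') argument. First I would fix the auxiliary parameters: choose $\theta,\tilde{\te}>1$, $\upsilon,\tilde{\upsilon}\in[0,1]$ and $\be_0\in[1,\frac{d(1+\mu)}{d(1+\mu)-2})$ so that the structural requirements $p_{\upsilon,\te}>d$ and $p_{\tilde{\upsilon},\tilde{\te}}>\frac d2$ hold — these are exactly the hypotheses of the Proposition, so Lemma \ref{lem:lem1sec12} applies for this choice — and, crucially, so that the exponent
\[
\zeta:=\frac{(2+2\mu)r\upsilon\af}{(1-\mu)\theta\be_0}+\frac{(2+2\mu)\tilde{r}\tilde{\upsilon}\af}{(1-\mu)\tilde{\theta}\be_0}
\]
appearing on the right-hand side of the estimate of Lemma \ref{lem:lem1sec12} satisfies $\zeta<1$; the possibility of arranging this (for some $n\in\mathbb N$) is precisely what condition \eqref{eq:eq1sec12} asserts, and it is here that the smallness of $\af$ from Assumption A\ref{alpha3} is ultimately used (that verification being deferred to Theorem \ref{teo:sec12}).

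With such a choice, Lemma \ref{lem:lem1sec12} gives
\[
\left\|Du^\epsilon\right\|_{L^\infty(0,T;L^\infty(\Tt^d))}\leq C+C\left\|Du^\epsilon\right\|_{L^\infty(0,T;L^\infty(\Tt^d))}^{\zeta},
\]
with $C$ independent of $\epsilon$. The next step is to absorb the last term. Since $(u^\epsilon,m^\epsilon)$ is a \emph{classical} solution of the regularized problem \eqref{eq:smfg}-\eqref{itvp}, the quantity $X_\epsilon:=\left\|Du^\epsilon\right\|_{L^\infty(0,T;L^\infty(\Tt^d))}$ is finite; hence the scalar inequality $X_\epsilon\le C+CX_\epsilon^{\zeta}$ with $0\le\zeta<1$ applies to a genuine nonnegative real number and may be solved for $X_\epsilon$. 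Either directly, by locating where $t\mapsto t-Ct^{\zeta}$ becomes positive, or via Young's inequality $CX_\epsilon^{\zeta}\le \tfrac12 X_\epsilon+C_\zeta$, one concludes $X_\epsilon\le C_0$ with $C_0=C_0(C,\zeta)$ independent of $\epsilon$. This yields $Du^\epsilon\in L^\infty(\Tt^d\times[0,T])$, and in fact with a bound uniform in $\epsilon$, which is what Theorem \ref{teo:sec12} records.

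I do not anticipate a genuine difficulty in this Proposition itself: it is essentially a packaging of Lemma \ref{lem:lem1sec12} together with an elementary one-variable inequality. The only point that requires care is the legitimacy of the absorption, i.e. that $X_\epsilon$ is a priori finite so that subtracting $CX_\epsilon^{\zeta}$ is permitted; this is exactly why the entire chain of estimates has been run on the regularized system \eqref{eq:smfg} (whose solutions are smooth) rather than on \eqref{eq:smfg0} directly. The substantive content — that the exponents $\theta,\tilde{\te},\upsilon,\tilde{\upsilon}$ can be tuned so that $\zeta<1$ exactly under A\ref{alpha3} — is what \eqref{eq:eq1sec12} abstracts away, and it is handled separately.
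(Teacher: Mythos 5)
Your argument matches the paper's proof exactly: apply Lemma \ref{lem:lem1sec12} to obtain $X_\epsilon\le C+CX_\epsilon^{\zeta}$ with $\zeta<1$ by \eqref{eq:eq1sec12}, then absorb via Young's inequality. Your explicit remark that the a priori finiteness of $X_\epsilon=\|Du^\epsilon\|_{L^\infty(0,T;L^\infty(\Tt^d))}$ (guaranteed because the regularized system \eqref{eq:smfg} has smooth solutions) is what makes the absorption legitimate is a point the paper leaves implicit, and is worth stating.
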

\begin{proof}
Lemma \ref{lem:lem1sec12} ensures that 
\[
\left\|Du^\ep\right\|_{L^\infty(0,T;L^\infty(\Tt^d))}\leq C+C\left\|Du^\ep\right\|_{L^\infty(0,T;L^\infty(\Tt^d))}^{\frac{(2+2\mu)r\upsilon\af}{(1-\mu)\theta\be_0}+\frac{(2+2\mu)\tilde{r}\tilde{\upsilon}\af}{(1-\mu)\tilde{\theta}\be_0}}.
\]Because of \eqref{eq:eq1sec12},  the result follows using Young's inequality.
\end{proof}

The results in this Section strongly rely on several constraints involving the various parameters of the problem. It is critical to ensure that this set of constraints can be mutually satisfied. This is done in the following Lemma:

\begin{Lemma}\label{techlemma}
If $$\af<\frac{2}{d(1+\mu)-2},$$
then there exist $1<\theta,\tilde{\theta}$ and
$0\leq \upsilon,\,\tilde{\upsilon}\leq 1$ such that 
for $r=r_\te$, $\tilde{r}=r_{\tilde{\te}}$ given by \eqref{eq:eq3sec84}
and $p=p_{\upsilon,\te}$, $\tilde{p}=p_{\tilde{\upsilon},\tilde{\te}}$
given by \eqref{eq:eq4sec84} 
we have that $p>d$,  $\tilde{p}>\frac d2$ and
\begin{equation}\label{eq:eq1sec12}
\frac{(2+2\mu)r\upsilon\af}{(1-\mu)\theta\be_0}+\frac{(2+2\mu)\tilde{r}\tilde{\upsilon}\af}{(1-\mu)\tilde{\theta}\be_0}<1,
\end{equation}
are satisfied. 
\end{Lemma}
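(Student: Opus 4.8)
The goal is a feasibility statement: under the single hypothesis $\af<\frac{2}{d(1+\mu)-2}$, one can pick parameters $\theta,\tilde\theta>1$ and $\upsilon,\tilde\upsilon\in[0,1]$ so that $p_{\upsilon,\te}>d$, $p_{\tilde\upsilon,\tilde\te}>\frac d2$ and the exponent inequality \eqref{eq:eq1sec12} holds. My plan is to exploit an asymmetry between the two ``blocks'': I will drive the quantity $\beta_0$ up toward its ceiling $\frac{d(1+\mu)}{d(1+\mu)-2}$ and, separately, take the parameters $\theta,\tilde\theta$ (and hence $r_\te,r_{\tilde\te}$, which grow like $\frac{d(\theta-1)}2$) in a coordinated way with $\upsilon,\tilde\upsilon$. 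The cleanest route is to treat the two terms of \eqref{eq:eq1sec12} almost identically except for the required lower bounds on $p$ and $\tilde p$: for the first term I need $p_{\upsilon,\te}=\frac{\beta_{\upsilon,\te}}{\af}>d$, for the second only $p_{\tilde\upsilon,\tilde\te}>\frac d2$, which is strictly weaker and gives extra room.

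The key computation is to express the exponent $\frac{(2+2\mu)r_\te\upsilon\af}{(1-\mu)\theta\be_0}$ in terms of the free parameters and see how small it can be made subject to $p_{\upsilon,\te}>d$. Since $r_\te=\frac{d(\theta-1)+2}2$ and $\beta_{\upsilon,\te}=\frac{\theta\be_0}{\theta+\upsilon-\theta\upsilon}$, the constraint $p_{\upsilon,\te}>d$ reads $\frac{\theta\be_0}{\af(\theta+\upsilon-\theta\upsilon)}>d$, i.e. $\theta+\upsilon-\theta\upsilon<\frac{\theta\be_0}{d\af}$. I would first send $\theta\to1^{+}$: then $r_\te\to1$, the factor $\frac{r_\te}{\theta}\to1$, and the exponent of the first block tends to $\frac{(2+2\mu)\upsilon\af}{(1-\mu)\be_0}$, while the constraint degenerates to $1<\frac{\be_0}{d\af}$, i.e. $\be_0>d\af$, which is compatible with $\be_0<\frac{d(1+\mu)}{d(1+\mu)-2}$ precisely because $\af<\frac{2}{d(1+\mu)-2}$ forces $d\af<\frac{2d}{d(1+\mu)-2}<\frac{d(1+\mu)}{d(1+\mu)-2}$ (using $1+\mu>1$). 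So one can fix $\be_0$ strictly between $d\af$ and $\frac{d(1+\mu)}{d(1+\mu)-2}$; then for $\theta$ close enough to $1$ the constraint $p_{\upsilon,\te}>d$ holds even for $\upsilon=1$, and the first block's exponent is close to $\frac{(2+2\mu)\af}{(1-\mu)\be_0}$. I then check that this limiting value is strictly less than $\frac12$: this amounts to $\frac{2(1+\mu)\af}{(1-\mu)\be_0}<\frac12$, i.e. $\be_0>\frac{4(1+\mu)\af}{1-\mu}$; but since we only need $\be_0>d\af$ and $\be_0$ can be taken as large as (almost) $\frac{d(1+\mu)}{d(1+\mu)-2}$, I should instead argue directly with $\upsilon$ small: choosing $\upsilon$ (and $\tilde\upsilon$) sufficiently small makes each block's exponent as small as desired, at the cost of checking the $p$-constraints still hold — but as $\upsilon\to0$, $\beta_{\upsilon,\te}\to\be_0$, so $p_{\upsilon,\te}\to\frac{\be_0}{\af}>d$ exactly when $\be_0>d\af$, and $p_{\tilde\upsilon,\tilde\te}\to\frac{\be_0}\af>\frac d2$ holds a fortiori. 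Thus small $\upsilon,\tilde\upsilon$ and $\theta,\tilde\theta$ near $1$ make the left side of \eqref{eq:eq1sec12} arbitrarily small, in particular $<1$.

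So the concrete recipe is: fix $\be_0\in\big(d\af,\frac{d(1+\mu)}{d(1+\mu)-2}\big)$, which is nonempty by hypothesis; then choose $\upsilon=\tilde\upsilon$ small and $\theta=\tilde\theta$ close to $1$; verify $p_{\upsilon,\te}=\frac{\be_0+O(\upsilon)}{\af}>d$ and $p_{\tilde\upsilon,\tilde\te}>\frac d2$ by continuity; and observe that $\frac{(2+2\mu)r_\te\upsilon\af}{(1-\mu)\theta\be_0}=O(\upsilon)$, so the sum of the two blocks is $O(\upsilon)<1$. The main obstacle — and the only place that genuinely uses the sharp form of the hypothesis rather than a crude bound — is confirming the interval $\big(d\af,\frac{d(1+\mu)}{d(1+\mu)-2}\big)$ is nonempty; this is exactly the inequality $\af<\frac{2}{d(1+\mu)-2}$ after clearing the common factor, since $d\af<\frac{d(1+\mu)}{d(1+\mu)-2}\iff \af(d(1+\mu)-2)<d(1+\mu)\iff \af<\frac{d(1+\mu)}{d(1+\mu)-2}\cdot\frac1d$, and one checks this is implied by (in fact weaker than, for $\mu>0$) the stated bound. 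I would double-check this last chain of inequalities carefully, since the exact constant $\frac{2}{d(1+\mu)-2}$ must come out, and it is conceivable the proof needs the full strength in the $p>d$ (not $p>d/2$) branch, forcing the more delicate two-parameter optimization rather than the $\upsilon\to0$ shortcut; if so, I would instead keep $\upsilon$ of order $1$, push $\be_0$ to the ceiling, and balance $r_\te\upsilon$ against the $p>d$ constraint explicitly, which is where the arithmetic gets tight.
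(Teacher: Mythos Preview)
Your $\upsilon\to 0$ shortcut contains a genuine error. You correctly reduce the question of whether the interval $\big(d\af,\tfrac{d(1+\mu)}{d(1+\mu)-2}\big)$ is nonempty to the condition $\af<\tfrac{1+\mu}{d(1+\mu)-2}$, but then claim this is \emph{weaker} than the hypothesis $\af<\tfrac{2}{d(1+\mu)-2}$. It is the other way around: since $0<\mu<1$ we have $1+\mu<2$, so $\tfrac{1+\mu}{d(1+\mu)-2}<\tfrac{2}{d(1+\mu)-2}$, and your condition is \emph{stronger}. For $\af$ in the nonempty range $\big[\tfrac{1+\mu}{d(1+\mu)-2},\tfrac{2}{d(1+\mu)-2}\big)$ no admissible $\be_0$ satisfies $\be_0>d\af$, and the $\upsilon\to 0$ argument collapses.

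Your hedged fallback is exactly what is required, and it does work. Take $\be_0$ close to its ceiling $\tfrac{d(1+\mu)}{d(1+\mu)-2}$ and set $c=1-\tfrac{\be_0}{d\af}\ge 0$. The constraint $p_{\upsilon,\te}>d$ reads $\upsilon>\tfrac{c\theta}{\theta-1}$; choosing $\upsilon$ just above this value and sending $\theta\to\infty$ (so that $\tfrac{r_\te}{\theta-1}\to\tfrac d2$) drives the first block of \eqref{eq:eq1sec12} to
\[
\frac{(1+\mu)cd\af}{(1-\mu)\be_0}\;\longrightarrow\;\frac{\af(d(1+\mu)-2)-(1+\mu)}{1-\mu}
\quad\text{as }\be_0\uparrow\frac{d(1+\mu)}{d(1+\mu)-2},
\]
which is $<1$ precisely when $\af(d(1+\mu)-2)<2$, i.e.\ the stated hypothesis. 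The second block is handled by your original observation: since $\tfrac{d\af}{2}<\tfrac{d(1+\mu)}{d(1+\mu)-2}$ always holds under the hypothesis, one may take $\tilde\upsilon\to 0$ (any $\tilde\theta>1$) and make $E_2$ arbitrarily small while keeping $\tilde p>\tfrac d2$. Hence $E_1+E_2<1$ is achievable.

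For comparison, the paper gives no analytical argument at all: its proof consists of the single sentence ``we use the symbolic software Mathematica; see \cite{PIM} for details.'' Your fallback computation is thus more informative than the paper's own proof, once the sign error in the shortcut is corrected.
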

\begin{proof}
To establish the Lemma we use the symbolic software Mathematica. See \cite{PIM} for details. 
\end{proof}

We can now end the paper with the proof of Theorem \ref{teo:sec12}:

\begin{proof}[Proof of Theorem \ref{teo:sec12}.]
It remains to check that \eqref{eq:eq3sec84} as well as \eqref{eq:eq4sec84} and \eqref{eq:eq1sec12} hold simultaneously. In fact, under A\ref{alpha3} it follows from Lemma \ref{techlemma}.
\end{proof}

\bibliographystyle{plain}
\bibliography{mfg}

\end{document}